\newtheorem{theorem}{Theorem}[section]
\newtheorem{lemma}[theorem]{Lemma}
\newtheorem{prop}[theorem]{Proposition}
\newtheorem{observation}[theorem]{Observation}
\newtheorem{fact}[theorem]{Fact}
\theoremstyle{definition}
\theoremstyle{remark}
\newtheorem*{remark*}{Remark}
\newcommand\R{\mathbb{R}}
\newcommand\Z{\mathbb{Z}}
\newcommand\cB{\mathcal{B}}
\newcommand\cQ{\mathcal{Q}}
\newcommand\cU{\mathcal{U}}
\newcommand\cE{\mathcal{E}}
\newcommand\eps{\varepsilon}
\renewcommand{\leq}{\leqslant}
\renewcommand{\geq}{\geqslant}
\renewcommand{\le}{\leqslant}
\renewcommand{\ge}{\geqslant}
\renewcommand{\to}{\rightarrow}
\renewcommand{\Re}{\re}
\def\eps{\varepsilon}
	\def\C{\mathbb{C}}
		\def\cF{\mathcal{F}}
	\def\R{\mathbb{R}}
	\def\Z{\mathbb{Z}}
	\def\PP{\mathbb{P}}
	\def\1{\mathbbm{1}}
	\def\l{\lambda}
	\def\s{\sigma}
	\def\g{\gamma}
	\def\la{\langle}
	\def\ra{\rangle}
	\def\Var{\mathrm{Var}}
	\def\Ber{\mathrm{Ber}}
	\def\tr{\mathrm{tr}}
  \def\supp{\mathrm{Supp\,}}
	\def\cB{\mathcal{B}}
	\def\EE{\mathbb{E}}
	\def\cQ{\mathcal{Q}}
\def\col{\mathrm{Col}}
\def\row{\mathrm{Row}}
\newcommand{\snorm}[1]{\lVert#1\rVert}
\renewcommand{\Re}{\operatorname{Re}}
\renewcommand{\Im}{\operatorname{Im}}
\newcommand{\mb}{\mathbb}
\newcommand{\mc}{\mathcal}
\newcommand{\mr}{\mathrm}
\newcommand{\on}{\operatorname}
\newcommand{\wt}{\widetilde}
\begin{document}

\title{The sparse circular law, revisited}

\author[A1]{Ashwin Sah}
\address{Massachusetts Institute of Technology. Department of Mathematics.}
\email{asah@mit.edu}

\author[A2]{Julian Sahasrabudhe}
\address{University of Cambridge. Department of Pure Mathematics and Mathematical Statistics.}
\email{jdrs2@cam.ac.uk}

\author[A3]{Mehtaab Sawhney}
\address{Massachusetts Institute of Technology. Department of Mathematics.}
\email{msawhney@mit.edu}
\thanks{Sah was supported by the PD Soros Fellowship. Sah and Sawhney were supported
by NSF Graduate Research Fellowship Program DGE-2141064. Part of this work was conducted when Sawhney was visiting Cambridge with support from the Churchill Scholarship.}

\begin{abstract}
Let $A_n$ be an $n\times n$ matrix with iid entries distributed as Bernoulli random variables with parameter $p = p_n$. Rudelson and Tikhomirov, in a beautiful and celebrated paper, show that the distribution of eigenvalues of $A_n \cdot (pn)^{-1/2}$ is approximately uniform on the unit disk as $n\rightarrow \infty$ as long as $pn \rightarrow \infty$, which is the natural necessary condition.  

In this paper we give a much simpler proof of this result, in its full generality, using a perspective we developed in our recent proof of the existence of the limiting spectral law when $pn$ is bounded. One feature of our proof is that it entirely avoids the use of $\eps$-nets and, instead, proceeds by studying the evolution of the singular values of the shifted matrices $A_n-zI$ as we incrementally expose the randomness in the matrix. 
\end{abstract}

\maketitle

\vspace{-1em}

\section{Introduction}

For an $n\times n$ matrix $M$, we define the \emph{spectral distribution} of $M$ to be the probability measure $\mu_M$ that puts a point mass of equal weight on each eigenvalue of $M$:
\[ \mu_M = n^{-1}\sum \delta_{\l}.\] The
study of the spectral distribution of \emph{random} matrices $M$ goes back 
to the seminal work of Wigner \cite{Wig58} in the 1950s who showed that the spectral distribution of random \emph{symmetric} matrices (so called \emph{Wigner} matrices) converges to the, so-called, semi-circular law in the large $n$ limit, after an appropriate rescaling. 

Determining the limiting spectral distribution for matrices with \emph{iid} entries
proved to be substantially more difficult and was only resolved by Tao and Vu \cite{TV10} after a long succession of important papers going back to the 1960s \cite{Meh67,Ede88,Gir84,Bai97,GT10,TV08}. They showed that the spectral measure for such matrices, after appropriate rescaling,  tends to the \emph{circular law} as $n\rightarrow \infty$, which is the probability measure that is uniform on the unit disc in $\C$.

\begin{theorem}[{Tao and Vu}] \label{thm:tao-vu-circular-law}
Let $\xi$ be a complex random variable with mean $0$ and variance $1$. For each $n$, let $A_n$ be a random matrix with iid entries distributed as $\xi$.
If we put $A^{\ast}_n = A_n \cdot n^{-1/2}$ then the spectral measure 
$\mu_{A_n^{\ast}}$ converges to the circular law in probability.
\end{theorem}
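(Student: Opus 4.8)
The plan is to use Girko's Hermitization together with the method of logarithmic potentials. Recall that for a probability measure $\mu$ on $\C$ with $\int \log(1+|w|)\,d\mu(w) < \infty$ the logarithmic potential $U_\mu(z) = \int_\C \log|z - w|\,d\mu(w)$ determines $\mu$ (as a distribution, via $\tfrac{1}{2\pi}\Delta U_\mu = \mu$), and that $U_{\mu_{\mathrm{circ}}}(z)$ equals $\log|z|$ for $|z|\ge 1$ and $(|z|^2-1)/2$ for $|z|\le 1$. The starting point is the identity
\[
U_{\mu_{A_n^\ast}}(z) = \frac1n \log\bigl|\det(A_n^\ast - zI)\bigr| = \frac12\int_0^\infty \log x\ d\nu_n^z(x),
\]
where $\nu_n^z$ is the empirical distribution of the squared singular values of $A_n^\ast - zI$. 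So it suffices to prove that for Lebesgue-almost every $z\in\C$ one has $\tfrac12\int_0^\infty \log x\ d\nu_n^z(x) \to U_{\mu_{\mathrm{circ}}}(z)$ in probability; a soft argument then upgrades this to weak convergence of the measures in probability (tightness of $\mu_{A_n^\ast}$ from $\|A_n^\ast\| = O(1)$ with high probability, passing to a weak subsequential limit $\mu$, and using Fubini together with the almost-everywhere convergence of the potentials to conclude $U_\mu = U_{\mu_{\mathrm{circ}}}$, hence $\mu = \mu_{\mathrm{circ}}$).

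First I would handle the \emph{Hermitized} problem: convergence of $\nu_n^z$ to a deterministic limit $\nu^z$. Since $\xi$ is only assumed to have finite variance, one truncates the entries at level $n^{\eta}$ (recentring and rescaling) and checks, via a Hoffman--Wielandt / low-rank-perturbation argument, that this changes $\nu_n^z$ negligibly in the weak topology using only the second moment. For the truncated model one expands the resolvent of $(A_n^\ast - zI)(A_n^\ast - zI)^\ast$ by Schur complements to derive the self-consistent quadratic equation for its Stieltjes transform, obtaining $\nu_n^z \to \nu^z$ in probability, and then computes, as Girko did, that $\tfrac12\int_0^\infty \log x\ d\nu^z(x) = U_{\mu_{\mathrm{circ}}}(z)$. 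This step uses no anti-concentration and is essentially routine.

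The crux is \textbf{uniform integrability of $\log$ against $\nu_n^z$}: weak convergence of $\nu_n^z$ does not suffice, because $\log x$ blows up at $x = 0$ and at $x = \infty$. The blow-up at infinity is controlled by the operator-norm bound $\|A_n^\ast - zI\| = O(1 + |z|)$ with probability $1 - o(1)$ (a trace/moment estimate after truncation). The blow-up at $0$ is the genuine difficulty and demands two quantitative estimates: (i) a lower tail bound $s_n(z) \ge n^{-C}$ with probability $1 - o(1)$ --- quantitative invertibility of $A_n^\ast - zI$; and (ii) control on the \emph{count} of singular values in a small window $[\delta',\delta]$ near $0$, so that $\int_0^{\delta}|\log x|\ d\nu_n^z(x)$ is small, uniformly in $n$, with probability $1 - o(1)$. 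Proving (i) and (ii) for a completely general $\xi$ (having only a second moment) forces anti-concentration / Littlewood--Offord estimates for sums $\sum_i a_i\xi_i$ over structured coefficient vectors $a$ on the sphere; classically this is also where a replacement (Lindeberg-swapping) principle is used to transfer from the truncated, near-Gaussian model back to general $\xi$ in the small-singular-value regime.

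The main obstacle is therefore precisely (i)--(ii): quantitative invertibility of $A_n^\ast - zI$ and control of its smallest singular values for arbitrary $\xi$. Everything else --- Hermitization, the self-consistent equation, Girko's computation, and the soft potential-to-measure passage --- is comparatively mechanical. It is exactly here that the present paper proposes to avoid $\eps$-nets on the sphere altogether, instead analysing how the singular values of $A_n - zI$ evolve as the randomness in the matrix is exposed one column at a time, which is what it carries out in the sequel.
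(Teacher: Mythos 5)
The paper does not reprove Theorem~\ref{thm:tao-vu-circular-law}; it is quoted as Tao and Vu's result, and the work actually carried out here concerns the sparse analogue (Theorems~\ref{thm:main} and~\ref{thm:main-complex}). Your sketch is a correct account of the classical Tao--Vu framework: Girko hermitization, identification of the limiting squared-singular-value distribution via self-consistent equations after truncation, reduction to uniform integrability of $\log$, and the recognition that the crux is (i) a polynomial lower bound on the least singular value together with (ii) a count of singular values near zero, both requiring Littlewood--Offord anti-concentration over structured directions with a Lindeberg replacement to pass back to general $\xi$. The route this paper takes for its own main theorem is, however, genuinely different at exactly that crux. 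Rather than estimating any individual small singular value, it compares $U_n(z)$ to a truncated potential $T_n(z)$ of a smaller $(1-\eps)n\times(1-\eps)n$ principal minor (with the bottom $\eps n/4$ singular values simply discarded), then evolves from $T_n$ to $U_n$ by adding rows and columns one at a time, showing via Lemma~\ref{prop:walk-row} and the projection anti-concentration Lemma~\ref{lem:proj-anti-concentration} that with probability $1-o_{d\to\infty}(1)$ each addition lets one take on a fresh singular value while losing only a factor $\eta_{r+1}$ in the product. The anti-concentration needed is only for a single new row projected onto a subspace already certified to be spread (Lemma~\ref{cor:spread-vector}) --- a local statement, rather than uniformity over an $\eps$-net and a compressible/incompressible split --- and the book-keeping is a drift argument for the random walk $h(t)$ (Lemma~\ref{lem:random-walk}). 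The classical route buys sharper least-singular-value bounds as a by-product; the paper's route buys exactly what the log potential requires (lower bounds of order $\exp(-o(n))$ on products of the top singular values) with a much lighter toolbox, and as a bonus extends painlessly to complex $\xi$ in the sparse regime where the net-based arguments do not.
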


Actually Tao and Vu also proved Theorem~\ref{thm:tao-vu-circular-law} for the stronger notion of almost sure convergence but we express their theorem in terms of convergence \emph{in probability} as we will be exclusively interested in this form of convergence. 
Indeed, we say a  sequence of random measures $\mu_n$ converges to the circular law \emph{in probability} if for all $s,t\in \mb{R}$ and $\eps>0$, we have
\[\lim_{n\rightarrow \infty} \mb{P}\bigg[\bigg|\mu_n((-\infty,s)\times (-\infty,t)) - \frac{1}{\pi}\int_{-\infty}^s\int_{-\infty}^t\1_{x^2+y^2\le 1}~dydx\bigg|\ge \eps\bigg] = 0.\]

While Theorem~\ref{thm:tao-vu-circular-law} gives us a very good understanding of the limiting spectral law of the spectrum of ``dense'' matrices, it does not tell us anything about matrices where the non-zero entries are sparse. Of particular interest are \emph{iid Bernoulli matrices} where all entries are iid and distributed as\footnote{Here $\Ber(p)$ denotes, as is standard, a $\{0,1\}$-Bernoulli random variable taking $1$ with probability $p$.} $\Ber(p)$ for $p = p_n \rightarrow 0$.

This sparse setting was considered by G\"{o}tze and Tikhomirov \cite{GT10}, who proved that the limiting spectral distribution is still the circular law when $p > n^{-1/4+\eps}$. Tao and Vu \cite{TV08} improved this range to cover all $p > n^{-1+\eps}$, and Basak and Rudelson further improved this to cover all $p > \omega(n^{-1}(\log n)^2)$.

Then, in a difficult and celebrated paper, Rudelson and Tikhomirov \cite{RT19} proved that $pn \rightarrow \infty$ is sufficient for convergence to the circular law, which is also the natural necessary condition: if $pn$ is bounded it is easy to see that the limiting measure must have a large atom at zero. 
 
In fact, they prove a more general result that allows for the non-zero entries to be replaced with iid copies of a random variable $\xi$ of variance $1$. Indeed let $A_n \sim \Delta_{n}(p,\xi)$ indicate that $A_n$ is a $n\times n$ random matrix where each entry is an iid copy of $\Ber(p) \cdot \xi$.

\begin{theorem}[Rudelson and Tikhomirov] \label{thm:main}
Let $\xi$ be a real random variable with $\mb{E}\, \xi^2=1$, let $np \rightarrow \infty$ and $p\to 0$ and for each $n$, let 
$A_n\sim\Delta_{n}(\xi,p)$. If we put $A^{\ast}_n = A_n \cdot (pn)^{-1/2}$ then $\mu_{A^{\ast}_n}$ converges to the circular law in probability.
 \end{theorem}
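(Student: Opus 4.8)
The plan is to follow the standard Girko-type reduction to the convergence of logarithmic potentials, and then supply the two required ingredients — convergence of the bulk of the singular spectrum of $A_n - zI$ for fixed $z$, and control of the smallest singular values — by the singular-value-evolution machinery developed in our earlier work on the bounded $pn$ regime, entirely avoiding $\eps$-net arguments. Concretely, writing $B_n(z) = A^\ast_n - zI$, Girko's Hermitization tells us that $\mu_{A^\ast_n}$ converges to the circular law provided that for (Lebesgue-)almost every $z \in \C$ the empirical measures $\nu_{B_n(z)}$ of the squared singular values converge weakly in probability to an explicit limiting law $\nu_z$, and moreover the family $\{\log(\cdot)\}$ is uniformly integrable against $\nu_{B_n(z)}$ in probability. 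The first of these — weak convergence of the bulk — is insensitive to sparsity once $pn \to \infty$: after centering, the matrix $A_n/(pn)^{1/2}$ has iid entries with variance $(pn)^{-1} \cdot p(1-p) \cdot \mathbb{E}\xi^2 \to$ the right normalization, and a standard moment or Stieltjes-transform computation (which one may quote from Bai–Silverstein-type results, valid since the Lindeberg-type condition holds as $pn\to\infty$) gives the desired deterministic limit $\nu_z$. So the entire difficulty is concentrated in the uniform integrability of $\log x$ near $x = 0$, i.e. in lower bounds on the small singular values of $B_n(z)$.

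For the uniform integrability one must control two scales. The easier part is to bound the contribution of the bulk of small singular values: one shows that for fixed $\delta > 0$, with probability $1 - o(1)$, at most $o(n)$ of the singular values of $B_n(z)$ lie below any fixed threshold, and more quantitatively that $s_{n-k}(B_n(z)) \gtrsim k/n$ for all $k \geq n^{1-c}$, say. In the $\eps$-net-free framework this is obtained by the negative-moment / incremental-exposure argument: expose the rows of $A_n$ one at a time and track how each new row, conditioned on the previous ones, is unlikely to lie close to the span of a bounded number of the others; the anti-concentration needed here is the small-ball estimate for $\langle X, v\rangle$ where $X$ is a sparse random vector, which is exactly where $pn \to \infty$ enters (it guarantees that $\mathrm{Ber}(p)\xi$ has bounded Lévy concentration at scale $(pn)^{-1/2}$, after removing an exceptional event of probability $o(1)$). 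The genuinely hard part is the smallest singular value itself: we need $s_n(B_n(z)) \geq n^{-C}$ with probability $1 - o(1)$. This is the sparse quantitative invertibility estimate, and it is the main obstacle. The plan is to prove it by the same strategy as in our bounded-$pn$ paper: decompose according to whether the (approximate) null vector is "spread" or "localized/compressible", handle the spread case by a tensorization of the anti-concentration estimate over coordinates together with the row-exposure martingale, and handle the compressible case by a direct union bound over a low-complexity family of structured vectors — crucially, this union bound is over an explicitly parametrized combinatorial family (sparse supports with quantized coordinates), not a metric $\eps$-net of the sphere, which is what keeps the argument simple and dimension-robust.

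Two further technical points need attention. First, one must pass from Lebesgue-a.e.\ $z$ to the statement as written, which is routine given the standard fact that the relevant bad set of $z$ has measure zero, together with a deterministic upper bound on $\int \log x \, d\nu_{B_n(z)}$ coming from $\mathbb{E}\,\|A^\ast_n\|_{\mathrm{HS}}^2 = O(n)$. Second, the reduction of the general variable $\xi$ to the Bernoulli case: here one truncates $\xi$ at a slowly growing level $\xi \1_{|\xi| \leq K_n}$, recenters, and argues that this perturbs all singular values by $o(\sqrt n)$ in operator or Hilbert–Schmidt norm with high probability (using $\mathbb{E}\xi^2 = 1$ so the tail $\mathbb{E}[\xi^2 \1_{|\xi|>K_n}] \to 0$), which does not affect weak convergence of $\nu_{B_n(z)}$ nor — after a short argument — the uniform integrability, since the truncation only helps anti-concentration. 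I expect the small-singular-value bound in the compressible/localized regime to be where almost all of the work lies; everything else is either classical or a direct transcription of the incremental-exposure method already set up in the earlier paper.
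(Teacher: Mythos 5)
You have the broad Girko framework right, and the opening move (reduce to almost-everywhere convergence of logarithmic potentials, handle the bulk by moment/Stieltjes methods, carry out a truncation-and-recentering of $\xi$) matches the paper. But the core of your plan is genuinely different from what the paper does, and the place you expect the work to be is precisely the part the paper is constructed to avoid.

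Your treatment of the small singular values proceeds by decomposing approximate null vectors into spread versus compressible/localized, handling the spread case by tensorized anti-concentration and the compressible case by a union bound over a discrete family of sparse, quantized vectors. That last step is an $\eps$-net argument in everything but name: you are covering a (structured) set of candidate null vectors by a discrete collection and union-bounding. This is exactly the Rudelson--Tikhomirov strategy, and it is exactly where their difficulty lives: in the regime $pn\to\infty$ arbitrarily slowly, a near-null vector typically has $\Theta(n/d)$ comparable coordinates (this is what Lemma~\ref{cor:spread-vector} establishes), so your combinatorial family has on the order of $\binom{n}{n/d}$ members while the per-vector anti-concentration only improves like $(pn)^{-1/2}$. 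Making this union bound close is the technical heart of Rudelson--Tikhomirov's paper, and you have not indicated how to do it more cheaply; asserting that the combinatorial parametrization ``keeps the argument simple'' is where your proposal has a genuine gap. You also aim directly at a polynomial lower bound $s_n(B_n(z))\ge n^{-C}$, which is stronger than needed and again commits you to essentially the full RT machinery.

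The paper sidesteps all of this by never bounding the least singular value of the full matrix directly. It compares the log potential $U_n(z)$ with a truncated sum $T_n(z)$ built from a principal minor $A_{m}$ with $m=(1-\eps)n$, and then ``grows'' $A_m$ back up to $A_n$ by adding rows and columns one at a time. The key identity is Lemma~\ref{prop:walk-row}: adding a row multiplies the top-$r{+}1$ singular value product by $\snorm{P X^\dagger}_2$, the projection of the new row onto the small-singular-vector space. The anti-concentration one needs is therefore not ``the sparse random vector $X$ avoids a fixed hyperplane'' but ``$P X^\dagger$ is not tiny,'' which is established (Lemma~\ref{lem:proj-anti-concentration}) after showing that near-kernel vectors are spread (Lemma~\ref{cor:spread-vector}), and the spreadness comes not from a compressible/incompressible dichotomy and union bound but from a graph quasi-randomness property (unique-neighbourhood expansion, Section~\ref{sec:UNE}) that holds for a single realization of the support of $A_t$. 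The residual probability of failure at each step is of order $\eps + (\log\log d)^{-1/2}$, and one does not need it to be summable: one tracks the surplus $h(t)=t-r(t)$ of uncontrolled singular values and shows it has enough downward drift to hit zero (Lemma~\ref{lem:random-walk}), a Costello--Tao--Vu-type random walk argument. If you want to reproduce this proof you should replace the structured/spread decomposition with this ``expose rows and columns, project onto the span of the small singular directions, and run the drift argument'' scheme; in particular, in this paper there is no union bound over structured vectors anywhere.
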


This long line of results leaves open the case of $p = d/n$, for constant $d >0$, which has proven to be the most difficult and subtle case. In our paper \cite{SSS23b}, we complete this program by proving the existence of the limiting measure in this case.

\begin{theorem}\label{thm:sparse-law}
For $d > 0$ and each $n$, let $A_n$ be an $n\times n$ matrix with iid entries distributed as $\on{Ber}(d/n)$. There exists a distribution $\mu_d$ on $\C$ so that $\mu_{A_n}$ converges to $\mu_d$, in probability. 
\end{theorem}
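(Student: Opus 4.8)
The plan is to run Girko's Hermitization scheme. By the standard replacement machinery (tightness of $\mu_{A_n}$ plus convergence of logarithmic potentials implies weak convergence to the measure with that potential), it suffices to prove two things. First, $\mu_{A_n}$ is tight: since $\mu_{A_n}(\{|w|>R\}) \le R^{-2}\int |w|^2\,d\mu_{A_n}(w) \le R^{-2}n^{-1}\|A_n\|_{\mathrm{HS}}^2$ by Schur's inequality $\sum|\lambda_i|^2\le\sum s_i^2$, and $n^{-1}\mathbb{E}\|A_n\|_{\mathrm{HS}}^2 = d$, we get $\mathbb{E}\,\mu_{A_n}(\{|w|>R\})\le d/R^2$. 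Second, for Lebesgue-almost every $z\in\mathbb{C}$ the logarithmic potential
\[ U_n(z) := \frac1n\log|\det(A_n - zI)| = \frac{1}{2n}\sum_{i=1}^{n}\log\bigl(s_i(A_n-zI)\bigr)^2 = \int_0^\infty \log\sqrt{x}\;d\rho_{n,z}(x) \]
converges in probability to a deterministic limit $U_d(z)$, where $\rho_{n,z}$ is the empirical law of the squared singular values of $A_n - zI$; one then sets $\mu_d := \tfrac{1}{2\pi}\Delta U_d$ (distributional Laplacian) and checks it is a probability measure as part of the Girko wrap-up.

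To control $U_n(z)$ I need (i) bulk convergence $\rho_{n,z}\to\rho_z$ and (ii) uniform integrability of $\log$ against $\rho_{n,z}$ near $0$ and $\infty$. For (i) I would invoke the objective method: at sparsity $d/n$ the matrix $A_n$ converges locally (Benjamini--Schramm) to a Poisson$(d)$ Galton--Watson branching structure, so the bipartite Hermitization
\[ \mathcal{H}_z(A_n) = \begin{pmatrix} 0 & A_n - zI \\ (A_n-zI)^{*} & 0 \end{pmatrix}, \]
whose spectrum is $\{\pm s_i(A_n-zI)\}$, is a sparse weighted graph converging locally weakly — the $-zI$ shift becomes a weight-$(-z)$ perfect matching between the two colour classes. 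The resolvent/Schur-complement machinery for local weak limits of sparse graphs (Bordenave--Lelarge, Salez), adapted to this weighted bipartite model, then yields $\rho_{n,z}\to\rho_z$ weakly in probability, where $\rho_z$ is the deterministic measure obtained by averaging the spectral measure at the root of the limiting tree operator (equivalently, described by the recursive distributional equation satisfied by the diagonal resolvent entries).

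The crux is (ii), and within it the behaviour near $0$. Near $\infty$ it is easy: $\int x\,d\rho_{n,z}(x) = n^{-1}\|A_n-zI\|_{\mathrm{HS}}^2$ has expectation $O(1+|z|^2)$ and $\log x\le\sqrt x$, so $\mathbb{E}\int_M^\infty\log\sqrt x\,d\rho_{n,z} = O(M^{-1/2})$ uniformly in $n$. Near $0$ I need, for Lebesgue-a.e. $z$, two quantitative inputs: a \emph{least singular value bound} $\Pr[s_n(A_n-zI)\le t]\le n^{O(1)}t^{c}+\exp(-n^{c'})$, which (since $\det(A_n-zI)$ vanishes in $z$ only on a countable set, hence $s_n>0$ a.s.\ for a.e.\ $z$) together with the trivial bound $s_n\le\|A_n\|_{\mathrm{op}}\le n$ gives $n^{-1}\mathbb{E}\,|\log s_n(A_n-zI)|=o(1)$; and an \emph{intermediate singular value / no-clumping bound} controlling the counting function $\rho_{n,z}([0,t])\le\phi(t)$ by a $\phi$ with $\int_{0^+}\phi(t)\,\tfrac{dt}{t}<\infty$ (e.g.\ $\phi(t)=n^{O(1)}t^{c}$ down to scale $t=e^{-n^{o(1)}}$, with the trivial bound below), which makes $\int_0^{\eps}\log\sqrt x\,d\rho_{n,z}$ uniformly small as $\eps\to0$. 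The delicate point, and the reason the problem is hard at $p=d/n$, is that neither bound can be obtained through the Rudelson--Vershynin $\eps$-net scheme: a union bound over a net of the sphere is hopeless against the $e^{-\Theta(d)}$-scale anti-concentration one gets from sparse linear forms. Instead — this is the ``perspective'' of the abstract — I would expose the randomness of $A_n$ incrementally, row by row, and track how $s_n(A_n-zI)$ (equivalently $\mathrm{dist}(R_k,\mathrm{span}\{R_i:i\ne k\})$) evolves: one shows that with the earlier rows revealed, the unit normal $v$ to the span of the remaining rows is ``incompressible/spread'' with non-negligible probability, where incompressibility is certified \emph{combinatorially} from the expansion of the underlying sparse bipartite graph (deleting a bounded number of coordinates still leaves a nearly full-rank submatrix) rather than by covering the sphere, and then a sparse Littlewood--Offord estimate $\sup_x\Pr[\langle R_k,v\rangle\in(x,x+t)]\le t^{c}+o(1)$ for such $v$ closes the induction.

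I expect the least singular value step — quantitative invertibility of a genuinely super-sparse shifted matrix without $\eps$-nets — to be by far the main obstacle: already the heart of the $pn\to\infty$ regime, it becomes strictly harder at $p=d/n$, where isolated and pendant vertices proliferate so that $A_n$ itself is far from full local rank and the shift $-zI$ is the only mechanism restoring invertibility; the structured-but-not-net-coverable normal vectors this produces, and in particular the sharp anti-concentration inequality needed for sparse linear forms at these vectors, are where essentially all the work lies. Once (i) and (ii) are in place, the remainder is the standard Girko argument: $\int\log\sqrt x\,d\rho_{n,z}\to\int\log\sqrt x\,d\rho_z=:U_d(z)$ in probability for a.e.\ $z$, $U_d$ is locally integrable, and passing to a subsequential weak limit of $\mu_{A_n}$ (tight) whose logarithmic potential must equal $U_d$ identifies the limit as the unique probability measure $\mu_d$ with $\Delta U_d=2\pi\mu_d$, giving $\mu_{A_n}\to\mu_d$ in probability.
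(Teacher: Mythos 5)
The paper does not itself prove Theorem~\ref{thm:sparse-law}: it is stated as a result of the companion paper \cite{SSS23b}, and the machinery developed in Sections 2--8 is deployed to reprove Theorem~\ref{thm:main-complex}, the $pn\to\infty$ circular law. Your scaffolding is compatible with the announced philosophy in several respects --- Girko hermitization, tightness via $\|A_n\|_{\mr{HS}}$, bulk convergence through Benjamini--Schramm local weak limits (which is indeed the right tool at bounded $d$, where trace moments alone do not determine the limit), incremental row revelation, spread-ness of near-kernel vectors certified combinatorially by expansion of the underlying bipartite graph, sparse Littlewood--Offord anti-concentration, and no $\eps$-nets. But the load-bearing idea of the method is missing.

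You place the whole difficulty in two direct quantitative inputs: a least-singular-value tail $\Pr[\sigma_n(A_n-zI)\le t]\le n^{O(1)}t^c+\exp(-n^{c'})$ and a no-clumping bound $\rho_{n,z}([0,t])\le n^{O(1)}t^c$ down to scale $e^{-n^{o(1)}}$, to be obtained by exposing one row at a time and controlling the distance from the new row to the span of the rest. At $p=d/n$ this is the wrong target and the wrong mechanism. The per-row anti-concentration failure probability is generically $\exp(-\Theta(d))$, a constant, so the union bound over $n$ rows implicit in the Rudelson--Vershynin distance-to-hyperplane route is hopeless; moreover a polynomial no-clumping rate is far stronger than anything needed or plausibly true --- the paper records that the relevant input is only the much weaker $\Pr[\sigma_{n-k}(A_z^\ast)\le e^{-\eps n/k}]=o(1)$. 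The method of \cite{SSS23b} and of this paper never estimates $\sigma_n$ at all. It compares the log-potential $U_n$ with a \emph{truncated} sum $T_n$ over the largest singular values of a slightly smaller principal minor, then grows the minor back to $A_n$ one row or column at a time while tracking the count $h(t)$ of singular values not yet absorbed into the running product $T_{r,t}$. Each addition pushes all singular values up by interlacing (Fact~\ref{fact:cauchy-interlacing}) and creates exactly one new one; the multiplicative gain is the norm of the projection of the new row onto the small right-singular subspace (Lemma~\ref{prop:walk-row}), which is bounded below with probability $1-O(\eps+(\log_{(2)}d)^{-1/2})$ rather than $1-\exp(-\Theta(d))$ thanks to the spread-ness of near-kernel vectors (Lemmas~\ref{cor:spread-vector}, \ref{lem:proj-anti-concentration}); and the resulting downward drift makes $h(n)=0$ whp via a supermartingale estimate (Lemma~\ref{lem:random-walk}). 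The occasional $\exp(-\Theta(d))$-probability failures of quasi-randomness are simply absorbed by the drift. Without this dynamic replacement of a static invertibility estimate, your plan stalls at exactly the step you identify as the obstacle.
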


In this paper we give a new and considerably shorter proof of Theorem~\ref{thm:main} based on the method which we used in \cite{SSS23b} to prove Theorem~\ref{thm:sparse-law}.

The method and ``philosophy'' of our proof is considerably different from that of Rudelson and Tikhomirov. We 
completely avoid any direct use of $\eps$-nets and, instead, favour of a more ``dynamic'' approach, where we track the evolution of the point processes defined by the singular values of the shifted matrices $A-zI$ as
we expose a new rows and columns. To pull off this analysis we need only to rely on a few ``quasi-randomness'' conditions on the graph defined by the non-zero entries.

One added advantage of our approach is that it allows us to effortlessly generalize Theorem~\ref{thm:main} of Rudelson and Tikhomirov to allow for complex-valued random variables $\xi$ with unit variance. It appears the work of Rudelson and Tikhomirov would not generalize to this complex case without significant new ideas. Thus our main theorem here is the following.

\begin{theorem} \label{thm:main-complex}
Let $\xi$ be a complex random variable with $\mb{E}\, |\xi|^2=1$, 
$np \rightarrow \infty$ and $p\to 0$ and for each $n$, let $A=A_n\sim\Delta_{n}(\xi,p)$. If we put $A^{\ast}_n = A \cdot (pn)^{-1/2}$ then $\mu_{A^{\ast}_n}$ converges to the circular law, in probability.
 \end{theorem}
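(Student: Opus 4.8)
The plan is to deploy the standard Hermitization strategy combined with a dynamical analysis of the least singular value, following the philosophy of our work in \cite{SSS23b}. By the replacement/universality machinery going back to \cite{TV08}, to prove convergence of $\mu_{A_n^\ast}$ to the circular law it suffices to establish, for (Lebesgue-almost) every fixed $z \in \C$, that the empirical spectral distribution of the Hermitian matrix $(A_n^\ast - zI)^\ast(A_n^\ast - zI)$ converges to a deterministic limit (which will be the measure predicted by the circular law via the logarithmic potential), together with a quantitative control on the small singular values guaranteeing that $\log \det(A_n^\ast - zI)$ is uniformly integrable. Concretely, writing $\sigma_1 \geq \cdots \geq \sigma_n$ for the singular values of $A_n - zI$ (with $z$ rescaled appropriately), the two things one must prove are: (i) convergence of the bulk, i.e.\ $\frac{1}{n}\sum_i \delta_{\sigma_i^2/(pn)}$ has a deterministic weak limit; and (ii) no pile-up of singular values near zero, i.e.\ a bound of the shape $\sigma_{n-k}(A_n-zI) \gtrsim (k/n)\cdot \sqrt{pn}$ with probability $1 - o(1)$, and in particular $\sigma_n(A_n - zI) \geq n^{-C}\sqrt{pn}$ with probability $1-o(1)$. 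Part (i) is comparatively soft: the Stieltjes transform of the symmetrized singular value distribution satisfies a self-consistent equation, and one can run the standard Bai-type argument, using only second-moment information on the entries, so the limit is insensitive to sparsity once $pn \to \infty$; the complex case presents no obstacle here since only $\E|\xi|^2 = 1$ is used.

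The heart of the matter is part (ii), and here is where the dynamical approach replaces $\eps$-nets. First I would record the ``quasi-randomness'' properties of the bipartite graph $G$ of non-zero entries of $A_n$: with probability $1 - o(1)$ every vertex has degree $\Theta(pn)$, every pair of vertices has co-degree $O(pn \cdot \max(1, pn/n)) + O(\log n)$, and more generally small vertex sets expand as expected; these follow from routine Chernoff and union-bound estimates and are the only features of the randomness of the support that we will use. Next, I would set up the incremental exposure: order the rows (equivalently columns) $1, \dots, n$ and track how the relevant spectral quantity of the top-left $k \times k$ (shifted) submatrix, or rather the distance from a new column to the span of the previous ones, evolves as $k$ increases. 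The key structural input is an interlacing/rank-one-update identity: adding a row and column changes the singular values in a controlled, monotone way, and the probability that the least singular value is small after step $k+1$, conditioned on the configuration at step $k$, can be bounded below by exhibiting that a random sparse vector $\Ber(p)\cdot\xi$ has a substantial projection onto any fixed subspace that is ``spread out'' in the sense dictated by the quasi-randomness of $G$. This is precisely where a small-ball / anti-concentration estimate enters, but now applied only to the \emph{new} randomness at each step against a \emph{fixed} (already-revealed) subspace, which is why no net over all subspaces is needed.

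To make the last point work one needs an anti-concentration inequality for $\sum_j \eta_j \xi_j v_j$ where $\eta_j \sim \Ber(p)$ and $v$ is a fixed unit vector, and crucially one needs it to be robust: the Lévy concentration function at scale $t$ should be bounded by roughly $t + (pn)^{-1/2} + (\text{structure penalty})$, uniformly over $v$, where the structure penalty is controlled by the additive-combinatorial ``richness'' of $v$ (via an Erd\H os--Littlewood--Offord / inverse-Littlewood--Offord type statement adapted to the Bernoulli-thinned setting). For the complex $\xi$ one works with the real $2n$-dimensional realification; the anti-concentration statement is genuinely two-dimensional but the Esseen-type Fourier argument goes through verbatim, and this is the only place the complex case differs from the real one — essentially cosmetically, which is the advertised advantage over \cite{RT19}. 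I expect the main obstacle to be the bookkeeping in (ii): propagating the bound $\sigma_{n-k} \gtrsim (k/n)\sqrt{pn}$ across all $k$ simultaneously, i.e.\ union-bounding the failure probabilities of the incremental steps while keeping the conditioning clean (the subspace at step $k$ must be measurable with respect to the already-exposed randomness, and one must ensure the quasi-randomness of $G$ is not destroyed by the conditioning). Handling the regime $pn \to \infty$ slowly — say $pn = (\log\log n)$ — requires the error terms in the anti-concentration and quasi-randomness estimates to be quantitatively matched against $1/(pn)$, and getting these to close in the full generality of the theorem, rather than under a mild lower bound on $pn$, is the delicate part.
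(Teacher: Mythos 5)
Your high-level philosophy matches the paper --- Girko hermitization, incremental exposure of rows/columns, anti-concentration of a freshly exposed sparse vector against a fixed subspace certified spread by graph quasi-randomness, and the observation that the complex case only needs projection to a real direction. But there are two load-bearing ideas in the paper that your plan misses, and as written the argument would not close.

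First, the target. You aim at pointwise singular value repulsion $\sigma_{n-k}(A_n - zI) \gtrsim (k/n)\sqrt{pn}$ for all $k$ simultaneously (and a least singular value bound $\sigma_n \geq n^{-C}\sqrt{pn}$). The paper \emph{explicitly does not prove any bound of this kind}; this is the very step it is designed to avoid, since it is where $\eps$-nets traditionally enter. Instead it proves a one-sided comparison of two log-sums, $U_n(z) \le T_n(z) + o(1)$, where $T_n(z)$ is a \emph{truncated} sum of the largest $(1-\eps/4)m$ log-singular values of the $m\times m$ principal minor $A_{m,z}^\ast$, $m=(1-\eps)n$. That truncation (discarding an $\eps n$-sized buffer of small singular values before the dynamics even begin) is essential; without it the drift argument below has nothing to spend, and you never mention it.

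Second, the per-step error and how it is accumulated. The step lemma (taking one more singular value into the sum after one row/column addition) succeeds with probability only $1 - O(\eps + (\log\log d)^{-1/2})$ --- a small constant, not exponentially small. Your plan to ``propagate the bound across all $k$ simultaneously, i.e.\ union-bounding the failure probabilities of the incremental steps'' cannot work against a constant per-step failure probability: $n$ events each failing with probability $(\log\log d)^{-1/2}$ do not union-bound to $o(1)$. The paper replaces the union bound with a biased-walk argument: it tracks $h(t) = t - r(t)$ (the number of not-yet-captured singular values), starts the walk at $\eps m/4$ thanks to the truncation, observes a downward drift of roughly $1/2$ per half-step over $2\eps n$ half-steps, and closes via an exponential-moment / supermartingale estimate (with a further ``don't certify when already ahead of schedule'' device to keep the conditioning from colliding with the quasi-randomness event). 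This is the analytic heart of the method, and it is what makes a constant per-step failure rate tolerable.

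A smaller point: you invoke ``inverse Littlewood--Offord / additive-combinatorial richness'' structure penalties. The paper uses none of this. All that is used is the forward L\'evy--Kolmogorov--Rogozin inequality together with a deterministic spreadness lemma for near-kernel vectors, derived iteratively from a ``unique neighbourhood expansion'' property of the bipartite support graph. Reaching for inverse Littlewood--Offord would reintroduce exactly the structural dichotomies and net constructions the method is meant to avoid, so that part of the plan, while not logically wrong, points in the wrong direction.
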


We now turn to sketch the proof of Theorem~\ref{thm:main-complex} and set up the remainder of the paper. 

\section{Description of method}\label{sec:sketch}

To establish the convergence of the spectral law to the circular law it is enough to prove the convergence of
the logarithmic potential of the spectral law to the logarithmic potential of the circular law. For this, we may use Girko's ``hermitization'' method (see e.g. \cite{BC12}) to express the logarithmic potential of the spectral law as the (random) function
\begin{equation} \label{eq:girko}
U_{n}(z) =  -\frac{1}{n} \sum_{j = 1}^{n} \log\big(\sigma_j\big(A_{z}^{\ast}\big)\big),
\end{equation} 
where we set $d = pn$, here and throughout the paper, and define  
\[ A_{z}^\ast = A^\ast-zI_n = d^{-1/2} A-zI_n.\]
Here we have also used the notation 
$\sigma_1(M) \geq \sigma_2(M) \geq \cdots \geq \s_m(M)$, to denote the (right) singular values of the $n \times m$ matrix $M$. Our main task is to prove that for all $z \in \C\setminus \{0\}$ we have 
\begin{equation}\label{eq:point-wise}
\lim_{n}\, U_n(z) =  U^{\circ}(z) = \begin{cases} \, 
-\log|z|,&\emph{if }|z|\ge 1 ;\\
\, (1-|z|^2)/2,&\emph{if }|z|\le 1, \end{cases}
\end{equation}
with probability $1$, where $U^{\circ}$ is the log potential of the circular law. Once we have proved this, we can simply appeal to the general theory of logarithmic potentials to conclude the convergence in probability $\mu_n \rightarrow \mu^{\circ}$.
  
Given the expression \eqref{eq:girko}, there are traditionally two steps to establish \eqref{eq:point-wise}. First, one shows that the limit of the empirical distribution of the \emph{singular values} $\sigma_j(A_{z})$ is ``what it should be'' by a (fairly standard) method of moments and truncation argument: this tells us that the ``bulk'' of the sum in \eqref{eq:girko} converges to what it is supposed to. Second, one shows that the small singular values 
$\s_n,\s_{n-1},\ldots$ don't spoil the bulk convergence of this sum by getting too small and dominating the sum~\eqref{eq:girko}. Since we have good tools for establishing the first step these days, it is this second step that represents the core challenge and, in particular, one essentially needs to prove bounds of the type 
\begin{equation}\label{eq:sing-val-bound}
\PP\big( \sigma_{n-k}(A_{z}^\ast) \leq \exp(-\eps n/(k+1) ) \big) = o(1),
\end{equation}
for any $\eps >0$ and all $k =0,1,2,\ldots$.
  
Now, \emph{heuristically} we expect that typically $\sigma_{n-k} = \Theta( (k+1) d^{1/2} n^{-1})$, and thus \eqref{eq:sing-val-bound} may not appear to be a particularly difficult obstacle to overcome, as it represents what we expect to be an extremely abnormal behaviour. However, obtaining bounds of this type has recently represented \emph{the} significant challenge in this area. Indeed, bounds of this type represent one of the main achievements of the work Tao and Vu \cite{TV10} in their work on the circular law for dense matrices. For sparse matrices, the challenge is greater still as there is less ``randomness'' to use. For their sparse circular law, Rudelson and Tikhomirov \cite{RT19}, develop a whole toolbox of sophisticated techniques for constructing $\eps$-nets to prove singular value estimates of the type\footnote{In fact, both Tao and Vu \cite{TV10}  well as Rudelson and Tikhomirov \cite{RT19} obtain better estimates on the least singular value than required, but this is not relevant to our discussion here.} \eqref{eq:sing-val-bound}. 
 
In this paper, we take a more direct route to proving \eqref{eq:point-wise} that is substantially different and considerably simpler. Instead of directly working with the singular values, we look to \emph{compare}  $U_n$ with a ``truncated'' version of the log potential of a principal minor of $A^\ast$. To elaborate on this, let $\eps \rightarrow 0$ sufficiently slowly and set $m = (1-\eps)n$. We will understand $A_{m,z}^\ast$ to be the top left $m \times m$ principal minor of $A_{z}^\ast$. The main objective of the proof will be to compare $U_n(z)$ with the \emph{truncated} sum 
\[ T_n(z) = -\frac{1}{n}\sum^{(1-\eps/4)m}_{j=1}\log\big(\sigma_j\big(A_{m,z}^\ast\big)\big).\]
The point here is that the sum $T_n(z)$ is much easier to deal with than the corresponding log potential: the smallest $\eps m/4$ singular values have been removed from the sum. Thus a simple variant of the trace moment method is sufficient to establish
\[T_n(z)=U^{\circ}(z)+o(1)\]
with high probability as $n\to\infty$. The core of the proof, therefore, lies in making the comparison between $U_n$ and $T_n$, and in particular showing $U_n(z) \leq T_n(z)+o(1)$, which we achieve dynamically: we ``build up'' $A_{n,z}^\ast$ from $A_{m,z}^\ast$ by alternately adding rows and columns. Simultaneously,
we build up $U_n$ from $T_n$ by taking more singular values into our sum, when possible. For this, we index time in \emph{half-integer} steps $t \in [m,n]$ so that at each integer time $t$ we have that $A_t^\ast$ is a $t\times t$ matrix and at time $t+1/2$, we define $A_{t+1/2}^\ast$ to be the $t \times (t+1)$ matrix which is $A_{t+1}^\ast$ with the bottom-most row deleted. Thus we build $A_{n,z}^\ast$ from $A_{m,z}^\ast$ as
\[ A_{m,z}^\ast \rightarrow A_{m+1/2,z}^\ast \rightarrow \cdots \rightarrow A_{n-1/2,z}^\ast \rightarrow A_{n,z}^\ast. \]
The point of this is that each row and column addition has the crucial property that it ``pushes'' all the singular values up and thus we aim to take more singular values into our sum as these values get ``pushed'' throughout the process. Let us define the sum 
\[ T_{r,t}(z) = -\frac{1}{n} \sum_{j = 1}^{r} \log\big(\sigma_j\big(A_{t,z}^\ast\big)\big). \]
Our crucial lemma tells us that we can take on singular values rather often: if $A_{t,0}$ satisfies some quasi-randomness conditions and $r < t$ then 
\begin{equation}\label{eq:step-in-walk} 
\PP\big( T_{r+1,t+1/2}( z ) \leq T_{r,t}(z) + \delta_{r+1} \big) = 1-o_{d\rightarrow \infty}(1),
\end{equation}
where the probability is only over the new row/column being added and $\delta_r$ is a sequence with the property that 
\begin{equation} \label{eq:delta-sum}
\sum_{r=(1-\eps/4)m}^n \delta_r = o_{\eps \rightarrow 0}(1).
\end{equation}
However this is not the end of the story; every time we add a column to our matrix we ``create'' a new singular value at $0$ that needs to be taken under control in the following steps. Thus we need to ensure that our random process has sufficient drift to ensure that we can take on all new singular values by the end of the process. 

To see this is the case, consider the random process $r(t)$, which is defined as the number of singular values that we have taken on by time $t$. It makes sense to consider the ``height'' of this process defined by $h(t) = t - r(t)$, which represents the number of singular values we don't have in our possesion at time $t$. Thus the goal of our work can be phrased as showing that $h(n) =0$ with high probability. Note that at the start of this process, we have 
\[ h(m) = m - (1-\eps/4)m = \eps m/4 \]
and at each step we have a downward drift of $(1/2-o(1))$ with each row or column addition. Since there are $2\eps n$ row or column additions we expect the total drift to be $\eps n$ which is significantly larger than $\eps n/4$, and should give $h(n) = 0$ whp. Of course the wrinkle is that we need to ensure that these quasi-randomness conditions, mentioned above, occur sufficiently often.

To prove \eqref{eq:step-in-walk} we are led to study the structure of the vector space spanned by the $\lceil t\rceil-r$ smallest singular vectors of $M$ (left-singular vectors for integral $t$ and right-singular vectors for half-integral $t$). In particular, we will need to show that if $X$ is a new row or column of our matrix and $P_{h,M}$ is the orthogonal projection onto the space spanned by the $h$ smallest singular vectors of the appropriate side then for ``quasi-random'' $M$ we have\footnote{Here are throughout this paper, we shall write $\log_{(2)} x = \log\log x$.}
\begin{equation}\label{eq:ker-anti-concentration}
\PP_X\big( \|P_{\lceil t\rceil-r,M} X\|_2 < \exp(-n\delta_r) \big) = O(\eps +  (\log_{(2)} d )^{-1/2}).
\end{equation}
While theorems of this type can be extremely challenging in general, here we can get away with a very weak notion of quasi-randomness, based on the number of rows that have a unique non-zero entry in a set of columns. From this quasi-randomness condition, we can deduce the following basic structural information about vectors $v \in \C^t$ that are near-singular vectors of $M$:
\begin{equation}
\label{eq:ker-struct} 
|\{ i\colon v_i \ge \exp(-n\delta_r) n^{-1/2} \}| \geq (n/(2d))\log_{(2)} d.
\end{equation}
This in, in turn, allows us to deduce \eqref{eq:ker-anti-concentration}.

In Section~\ref{sec:UNE} we define the notion of a ``unique neighbourhood expansion'' which our quasi-randomness notion is based on. In Section~\ref{sec:ker-vectors} we use this quasi-randomness property to derive~\eqref{eq:ker-struct}. In Section~\ref{sec:proj-anti-concentration} we prove that if the kernel has the property \eqref{eq:ker-struct} then we can deduce \eqref{eq:ker-anti-concentration}. Then in Section~\ref{sec:a-step} we prove \eqref{eq:step-in-walk}, showing how the process evolves in a single step. In Section~\ref{sec:analysis} we give the simple analysis of this random walk. In Section~\ref{sec:completion} we show the convergence of $T_n$ to $U^{\circ}$ and complete the proof of Theorem~\ref{thm:main-complex}.

\section{A few preliminaries}

In what follows we fix $\xi$ to be a complex random variable with
\[ \EE\,|\xi|^2  = 1 .\]
We regard $\xi$ as fixed throughout the paper and allow various quantities to depend on $\xi$. Given such a random variable $\xi$, we define $\beta = \beta(\xi)\leq 1$ to be such that 
\[ \max_{y}\, \PP\big( \|\xi - y\| < \beta \big) \leq 1-\beta.\]
Note that either $\beta(\xi)>0$  or $\beta(\Ber(1/2)\cdot \xi)>0$ and thus adjusting $p$ and rescaling if necessary we may assume that $\beta(\xi)>0$.

We define $\Delta_{n,m}(p,\xi)$ to be the probability space of all $n\times m$ matrices of $A$ where $A_{i,j} = \delta_{i,j}\xi_{i,j}$, all of the $\xi_{i,j}$ and $\delta_{i,j}$ are independent, and 
$\delta_{i,j} \sim \Ber(p)$ and $\xi_{i,j}$ is distributed as $\xi$.
We define $\Delta_{n}(\xi,p) = \Delta_{n,n}(\xi,p)$. We define $\col_n(\xi,p)$ to be the random column vector $X \in \R^n$ where $X_i$ are independent and distributed as $\Ber(p) \cdot \xi$. We define $\row_n(\xi,p)$ similarly. For a matrix $M$ we also define, as is standard, the \emph{Hilbert-Schmidt norm} to be $\snorm{M}_{\mr{HS}}^2 = \sum_{i,j}|M_{i,j}|^2$.

We define for easy reference the quantities $\delta_r$ mentioned in the 
proof outline. We define 
\begin{equation}\label{eq:def-delta_r} \delta_r = \begin{cases} \hspace{2mm} n^{-1} (\log(n/(n-r+1)))^{2}&\qquad  \text{for }  r < n(1-d^{-1/4})  ;\\
                 \hspace{2mm}     C n^{-1}(\log d)^8(\log(n/(n-r+1)) )^8 &\qquad  \text{for }  r \geq n(1-d^{-1/4}) .  \end{cases} \end{equation}
As we required in \eqref{eq:delta-sum} it is not hard to check that $\sum_{r=(1-\eps/4)n}^n \delta_r = O(\eps)$.  To save on clutter, it also makes sense to define 
\[\eta_r = \exp(-n\delta_r).\] 

In addition to the fixing of $\xi$, we make a few more global assumptions throughout the paper. We assume throughout $n$ is sufficiently large and that $Cn^{-1} \leq p \leq 1/2$, where $C$ is a sufficiently large constant depending only on $\xi$. We assume $t \in \frac{1}{2}\mathbb{Z} \cap [m,n]$ and $m = (1-\eps)n$. Throughout, for $t\in\mb{N}$, $A_t$ will be our $t\times t$ random matrix for time $t$ and $A_{t+1/2}$ will be a $t\times(t+1)$ random matrix, obtained as submatrices of $A$. Throughout we will define $A_{t,z} = A_t - zI$, where $I$ is either a square identity or $(t-1/2) \times (t+1/2)$ identity matrix depending if we are at an integer or half-integer time. As in the previous section, define $A_{t,z}^{\ast} = d^{-1/2}A_t - zI$. We also allow ourselves the convention that $d = pn$ and that $z \in \C$. All whp statements are meant with respect to $n\to\infty$ and therefore $d\to\infty$.

\section{Unique neighbourhood expansions and quasi-randomness properties}\label{sec:UNE}

The  goal of this section is to define the quasi-randomness event $\cE_r$ that will allow us to show that we can take on the $r$th singular value into our sum, with high probability, assuming $r \leq \lfloor t\rfloor$. We define this event $\cE_{r}$ in three parts
 \[ \cE_{r}(\xi,p) = \cU_{r}(\xi,p) \cap \cB(p) \cap \cQ(\xi,p) \cap \mc{R}(\xi,p).  \] Even before defining $\cE_{r}$ we state the main lemma of this section which is essentially the only thing we need to carry forward in the paper.

\begin{lemma}\label{lem:quasi-random}
If $r \geq t - n/d^{1/4}$ then 
\[ \PP(A_t \in \cE_{r} ) \geq 1-\exp(-d^{1/2}(\lceil t\rceil-r+1))-n^{-3} .\]\end{lemma}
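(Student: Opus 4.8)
The plan is to open the definition $\cE_r = \cU_r \cap \cB \cap \cQ \cap \cR$ and estimate the failure probability of each of the four constituent quasi-randomness events separately, using
\[ \PP(A_t \notin \cE_r) \le \PP(A_t \notin \cU_r) + \PP(A_t \notin \cB) + \PP(A_t \notin \cQ) + \PP(A_t \notin \cR). \]
The events $\cB$, $\cQ$, $\cR$ are ``global'' boundedness and quasi-randomness properties of the bipartite graph $G$ of nonzero entries of $A_t$ that do not refer to $r$: control of the row- and column-degrees of $G$ (each being $\mathrm{Bin}(t,p)$, hence concentrated about $d$), norm-type bounds on $A_t$ or its $0/1$ support pattern, and near-regularity or expansion of the subgraphs of $G$ induced on small sets of columns. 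Each relevant quantity is a sum of independent indicators, or a Lipschitz function of the independent entries of $A_t$, so a Chernoff/Bernstein estimate combined with a union bound over the relevant column sets (those of size at most $s_0$, where $s_0$ has order $(n/d)\log_{(2)} d$) bounds each failure probability by a quantity that one checks is dominated by $\exp(-d^{1/2}(\lceil t\rceil - r + 1))$ throughout the range $\lceil t\rceil - r \le nd^{-1/4}$ of the lemma; the only feature of the model needed here is $d = pn \to \infty$, so that $d$ swamps the logarithmic union-bound factors.

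The crux is the unique-neighbourhood expansion event $\cU_r$. Recall its intended role from Section~\ref{sec:sketch}: on $\cE_r$ every vector $v$ with $\snorm{A_{t,z}^\ast v}_2 < \eta_r$ must be spread out in the sense of \eqref{eq:ker-struct}, and for this it is enough that every sufficiently small set $S$ of columns has many rows whose unique nonzero entry among the columns of $S$ lies in $S$, with these ``unique-neighbour'' rows reaching all but a tiny fraction of the columns of $S$. Concretely, $\cU_r$ asserts that for every set $S$ of columns with $|S|$ in a size window running up to about $(n/(2d))\log_{(2)} d$, the number $N_S$ of rows of $A_t$ with exactly one nonzero entry among the columns of $S$ is at least a fixed fraction of its mean $\Theta(|S|d)$, and the columns realised as such unique neighbours cover all but at most a $\beta/10$-fraction of $S$. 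To bound $\PP(A_t\notin\cU_r)$, fix $S$ of size $s$; then $N_S$ is a sum of $t$ independent indicators of mean $\Theta(sd)$, so $\PP(N_S \le \tfrac{1}{2}\,\mathbb{E}\,N_S) \le \exp(-csd)$ by Chernoff, with a similar bound for the coverage deficiency, and then one sums over the $\binom{t}{s}\le (et/s)^s$ choices of $S$ and over $s$ in the window, obtaining at most $\sum_s \exp\!\big(s\log(et/s)-csd\big)$, which since $d \gg \log n \ge \log(et/s)$ is at most $\sum_s \exp(-\tfrac{1}{2} csd)$.

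The main obstacle is extracting the precise tail $\exp(-d^{1/2}(\lceil t\rceil - r + 1))$ rather than a cruder $\exp(-\Omega(d))$. Since the Chernoff step gives failure probability $\exp(-\Theta(sd))$ per column set of size $s$, matching the statement requires (i) asserting the unique-neighbour expansion only for column sets in a size window tied to $\lceil t\rceil - r$, so that the union bound is not squandered on very small sets where the exponent $sd$ is too small to beat $d^{1/2}(\lceil t\rceil - r + 1)$, and (ii) absorbing the combinatorial factor $\exp(s\log(et/s))$ from $\binom{t}{s}$ into the Chernoff exponent --- exactly the step that uses $d\to\infty$ (so $d\gg\log n$) and the hypothesis $r \ge t - nd^{-1/4}$, which together force $\tfrac{1}{2} csd \ge d^{1/2}(\lceil t\rceil - r + 1)$ across the whole window, after which summing the geometric-type series in $s$ gives the bound. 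The events $\cB$, $\cQ$, $\cR$ are routine by comparison, handled by the same Chernoff-plus-union-bound recipe with room to spare. I note finally that the sequences $\delta_r$ and $\eta_r$ play no role in this lemma --- they are used only in Sections~\ref{sec:ker-vectors}--\ref{sec:a-step} --- so the sole genuine tuning here is the choice of the size window in $\cU_r$ together with the Chernoff constants.
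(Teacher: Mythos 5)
The decomposition into a union bound over $\cU_r$, $\cB$, $\cQ$, and $\mc{R}$, and the Chernoff-plus-union-bound template for $\cU_r$, do match the structure of the paper's Appendix~\ref{sec:facts}. But there are two genuine gaps in the way you execute it.

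First, you invoke ``$d\gg\log n$'' to absorb the entropy factor $\binom{t}{s}\le\exp(s\log(et/s))$ into a Chernoff exponent $\exp(-csd)$. That hypothesis is not available: the only assumption is $d=pn\to\infty$ with $p\to 0$, so $d$ may grow as slowly as, say, $\log\log n$, and then for a column set $S$ of the smallest admissible size $s=c^\ast(\lceil t\rceil-r+1)=O(1)$ (reached when $r=\lceil t\rceil$) the factor $\binom{n}{s}\approx n^s$ is not beaten by $\exp(-csd)$. For this small-$s$ range the paper does not apply a plain Chernoff bound to $|U(S)|$; it splits $U(S)$ into $U(S)\cap S$ (rows of $S$ with no nonzero among the columns of $S$, a binomial with success probability $(1-p)^{|S|}\ge 1/2$ in the range $|S|\le n/(2d)$) and $U(S)\setminus S$ (rows outside $S$ with a unique large nonzero in $S$, success probability of order $p|S|$), estimates $\PP(|U(S)\cap S|<T)$ and $\PP(|U(S)\setminus S|<T)$ separately and takes the product. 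It is the first factor, of the form $(p|S|)^{(1-o(1))|S|}$, that overcomes $\binom{n}{|S|}$ for small $|S|$ when $d$ is merely $\omega(1)$. Only for $|S|\ge n/(2d)$, where $\EE|U(S)|$ is of order $n$ up to polylogarithmic factors and hence super-linear in $|S|$, does a plain Chernoff estimate suffice; your sketch covers only that regime.

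Second, the claim that the failures of $\cB$, $\cQ$, $\mc{R}$ are ``dominated by $\exp(-d^{1/2}(\lceil t\rceil-r+1))$ throughout the range'' cannot hold as stated. These events involve the magnitudes of the (possibly heavy-tailed) nonzero entries: for instance $\cQ$ requires the maximum entry of $A_t$ to be at most $n^3$, which, since $\xi$ is only second-moment bounded, can fail with probability of order $pn^2\cdot n^{-6}$, and similarly for $\mc{R}$. When $d$ is polynomial in $n$ this far exceeds $\exp(-d^{1/2})$. The paper accordingly only proves $\PP(\cB\cap\cQ\cap\mc{R})\ge 1-n^{-3}$ (Lemma~\ref{lem:neighbor-size}), so Lemma~\ref{lem:quasi-random} really holds with an extra additive $n^{-3}$; this is harmless where it is used (Lemma~\ref{lem:its-union-bound-time} sums over $O(\eps n^2)$ pairs $(t,r)$, absorbing an $n^{-3}$ term each), but the exponential bound should be attributed to the $\cU_r$-part alone, and asserting it for the other three events is a false step.
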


We now define $\cU_{r}$. For this, we define the ``unique neighbourhood'' of a set of columns. Here we are borrowing  terminology from the graph theoretic interpretation of our matrix where rows and columns represent vertices in a bipartite graph. Let $B$ be an 
$m \times \ell$ matrix and let $S \subseteq [\ell]$ be a set of indexes of columns of $B$. We define the \emph{unique neighbourhood} of $S$, which we denote $U(S)\subseteq[m]$ (a subset of the rows of $B$), in two parts. We first define 
\[ U(S) \setminus S 
= \big\{ i \in [m]\setminus S\colon B_{i,j}\neq 0 \text{ for a unique } j \in S \text{ and } 
|B_{i,j}|\geq \beta    \big\} \]
and then define
\[  U(S) \cap S = \big\{i\in[m]\cap S\colon B_{i,j} = 0 \text{ for all } j \in S \big\} . \] 

The motivation behind this definition comes from the observation
that if a vector $v\in \mb{R}^{\ell}$ is non-zero exactly on $S$ then $Bv$ must be non-zero on $U(S)\setminus S$.  A quantitative analog of this lemma appears as Observation~\ref{obs:NU-obs}.  

The event $\cU_r$ is the event that $U(S)$ is large for all moderately sized sets of columns $S$. In particular, fix $\alpha(x) = ( \log (n/x) )^{-2}$ and say $B \in \cU_{r}$ if for all subsets of columns $S$ with 
\[ c^\ast(\lceil t\rceil-r+1) \leq |S|  \leq (n/(2d))\log_{(2)} d \quad \text{ we have }  \quad |U(S)| \geq  \alpha(|S|) d|S|,\]
where $c^\ast>0$ is the absolute constant appearing in the statement of Lemma~\ref{lem:basis}, the precise value of which is not particularly important. 

The observation behind this definition is that we can use it to get some basic control over vectors that are in the kernel of our matrix. Indeed if 
$B \in \cU_r$ then 
\[ Bv = 0 \qquad \Longrightarrow \qquad  |\supp(v)| >  (n/(2d))\log_{(2)} d  \quad \text{ or } \quad |\supp(v)| < c^\ast(\lceil t\rceil-r+1). \] 
Actually we will need a more complicated version of this observation, which works for vectors that are near to kernel vectors.  

The following lemma tells us that $\cU_r$ holds with sufficiently high probability.

\begin{lemma}\label{lem:unstructured-graph}
If $r \geq t - n/d^{1/4}$ then
\[\PP( A_t \in \cU_{r} ) \geq 1-\exp(-2d^{1/2}(\lceil t\rceil-r+1)).\]
\end{lemma}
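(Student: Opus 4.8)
The plan is a union bound over column-sets, built on a decomposition of $U(S)$ into two independent pieces. Write $k:=\lceil t\rceil-r+1\ge1$ and let $m\le\ell\le m+1$ be the number of rows and columns of $A_t$, so $m=(1-o(1))n$. If the admissible range $c^\ast k\le s\le(n/(2d))\log_{(2)}d$ in the definition of $\cU_r$ is empty there is nothing to prove, so I will assume it is nonempty, i.e.\ $k\le n\log_{(2)}d/(2c^\ast d)$; the goal is then $\PP(A_t\notin\cU_r)\le\exp(-2d^{1/2}k)$. For a fixed column-set $S$ with $|S|=s$, the key move is to split $U(S)=(U(S)\setminus S)\sqcup(U(S)\cap S)$ and set $W(S):=|U(S)\setminus S|$, $V(S):=|U(S)\cap S|$. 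These are determined by the entries of $A_t$ in the \emph{disjoint} row-sets $[m]\setminus S$ and $[m]\cap S$, hence are independent, so with $a:=\lfloor\alpha(s)ds\rfloor$ I can use
\[\PP\big(|U(S)|<\alpha(s)ds\big)\ \le\ \PP\big(W(S)\le a\big)\cdot\PP\big(V(S)\le a\big).\]
I will bound the first factor by Chernoff and the second by a crude binomial-tail estimate of size $(sp)^{\Theta(s)}=(d/n)^{\Theta(s)}$; this second factor is what has to cancel the $\binom\ell s\approx n^s$ coming from the union bound, and is indispensable since a priori $d\to\infty$ arbitrarily slowly.

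For $W(S)$: a row $i\in[m]\setminus S$ has, among the $s$ columns of $S$, exactly one nonzero entry, and it has modulus $\ge\beta$, with probability at least $sp\,\beta(1-p)^{s-1}$ (here $\PP(|\xi|\ge\beta)\ge\beta$ is just the definition of $\beta$ at $y=0$). Using $|[m]\setminus S|\ge n/2$, together with $(1-p)^{s-1}\ge e^{-2}$ for $s\le n/d$ and $(1-p)^{s-1}\ge e^{-2sp}\ge1/\log d$ for $n/d<s\le(n/(2d))\log_{(2)}d$, I get $\Ex W(S)\ge c\beta sd$ in the first range and $\Ex W(S)\ge c\beta sd/\log d$ in the second, for an absolute $c>0$; and since $s\le(n/(2d))\log_{(2)}d$ forces $\log(n/s)\ge\tfrac12\log d$, I have $\alpha(s)ds\le 4ds/(\log d)^2$, which is $o(\Ex W(S))$ once $d$ is large in terms of $\xi$, so the Chernoff lower-tail bound gives $\PP(W(S)\le a)\le\exp(-\Ex W(S)/8)$. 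For $V(S)$: with $v:=|[m]\cap S|\ge s-1$, the number of rows of $[m]\cap S$ having a nonzero entry among the columns of $S$ is a sum of $v$ independent indicators of probability $\le sp$, so $\PP(V(S)\le a)\le\binom va(sp)^{v-a}$.

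Then I assemble the union bound, splitting at $s=n/d$. For $c^\ast k\le s\le n/d$ (so $\Ex W(S)\ge c\beta sd$): when $\alpha(s)d<1$, equivalently $s<ne^{-\sqrt d}$, I keep the $V(S)$-factor, bound the number of size-$s$ sets by $\binom\ell s$ (with the obvious variant $\binom m{s-1}$ for the sets containing the one column outside $[m]$ when $\ell=m+1$), and use $\binom\ell s(sp)^{s}\le(C_0d)^s$ together with $a\log\tfrac n{sd}\le ds/\log(n/s)\le\sqrt d\,s$ to see that the size-$s$ contribution is $\le 2(C_0d)^s\exp(\sqrt d\,s)\exp(-c\beta sd/8)\le\exp(-c'\beta sd)$ for $d$ large; when $ne^{-\sqrt d}\le s\le n/d$ I drop the $V(S)$-factor and instead use $\log(n/s)\le\sqrt d$, so $\binom\ell s\le\exp(2\sqrt d\,s)$, again giving $\le\exp(-c'\beta sd)$. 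For $n/d<s\le(n/(2d))\log_{(2)}d$ I drop the $V(S)$-factor and use $\log(n/s)<\log d$, so $\binom\ell s\le\exp(2s\log d)$, which against $\Ex W(S)\ge c\beta sd/\log d$ gives a size-$s$ contribution $\le\exp(-c'\beta sd/\log d)$. Summing these two geometric series,
\[\PP(A_t\notin\cU_r)\ \le\ 2\exp(-c''\beta c^\ast kd)+2\exp\!\Big(-c''\beta\,\frac{\max(c^\ast k,\,n/d)\,d}{\log d}\Big).\]
Since $d\to\infty$ I have $c''\beta c^\ast d\ge3\sqrt d$ and $c''\beta d/\log d\ge3\sqrt d$, while nonemptiness gives $\sqrt d\,k\le n\log_{(2)}d/(2c^\ast\sqrt d)=o(n/\log d)$; these make each term at most $\tfrac12\exp(-2\sqrt d\,k)$, which finishes the proof.

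The step I expect to be the real obstacle is exactly the tension flagged above. The union bound runs over $\binom\ell s\approx n^s$ column-sets, and since one only knows $d\to\infty$ --- conceivably far more slowly than $\log n$ --- a naive concentration bound for $|U(S)|$ through $W(S)$ alone gives failure probability only $\approx\exp(-\Theta(\beta sd))$ per set, which does not beat $n^s$. Splitting $|U(S)|$ into the independent pieces $W(S)$ and $V(S)$ is what saves it: the factor $\PP(V(S)\le a)$ contributes $(sd/n)^{(1-o(1))s}$, precisely absorbing $n^s$ and leaving a genuinely small $(C_0d)^{(1-o(1))s}$; the only delicate point --- and where $d\to\infty$, rather than $d$ merely large, is genuinely used --- is to check that the multiplicative slack $\exp(O(sd/\log d))$ incurred by allowing the threshold $\alpha(s)ds$ to be a real positive quantity (rather than merely asking for $|U(S)|\ge1$) is comfortably swallowed by the Chernoff factor $\exp(-\Theta(\beta sd))$.
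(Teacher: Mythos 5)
Your proof is correct and follows essentially the same route as the paper's: fix a column-set $S$, decompose $U(S)$ into the independent pieces $U(S)\cap S$ and $U(S)\setminus S$ (determined by disjoint sets of rows), control the $U(S)\setminus S$ piece by concentration of a binomial of mean $\Theta(\beta sd)$ (or $\Theta(\beta sd/\log d)$ for larger $s$), and use the factor $\mb{P}(|U(S)\cap S|\le a)\le\binom{v}{a}(sp)^{v-a}$ in the small-$s$ regime to absorb the $\binom{n}{s}$ from the union bound. The paper organizes the small-$s$ estimate as a double binomial-tail sum and splits the two $s$-regimes at $n/(2d)$ rather than at $n/d$, and you insert an extra sub-split at $s=ne^{-\sqrt d}$ where you switch off the $V(S)$ factor once it degenerates; these are cosmetic differences in the same argument.
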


Now define $A_t \in \cB$ if for all subsets $S\subseteq[\lfloor t\rfloor]$ of the rows, we have 
\[ \frac{1}{|S|}\sum_{i\in S}\sum_{j=1}^{\lceil t\rceil}\delta_{i,j} = O(d + \log(n/|S|)), \]
and analogously for the columns.

Furthermore, define $A_t \in \cQ$ if there are at most 
\[  2dn/H^2 + (\log n)^2  \text{ entries of } A_t \text{ which are }  > 8H/\beta \text{ in magnitude}\] for each $H\in[1,n^4]$, and additionally the maximum value of $A_t$ is at most $n^3$.

Finally define $A_t \in \mc{R}$ if for all $\ell\ge 1$, if we put $L = ( d n/(\beta \ell))^5$ then
\[ \big| \big\lbrace i\colon \sum_{j=1}^{\lceil t\rceil} |(A_{t})_{i,j}| > L/\beta \big\rbrace \big| \le \alpha(\ell) d\ell/4\]
and similarly for columns.

\begin{lemma}\label{lem:neighbor-size}
We have
\[ \PP( A_t \in \cB \cap \cQ \cap \mc{R}) \geq 1-n^{-3}.\]
\end{lemma}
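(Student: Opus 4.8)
The plan is to bound $\PP(A_t \notin \cB)$, $\PP(A_t \notin \cQ)$, $\PP(A_t \notin \mc{R})$ separately and then union bound, each at the level $n^{-O(1)}$ (indeed $\le \tfrac13 n^{-3}$ after choosing constants), since all three are ``typical'' degree/magnitude conditions that hold with only polynomial failure probability — in contrast to the much sharper $\exp(-d^{1/2}(\lceil t\rceil - r + 1))$ bound needed for $\cU_r$.

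For $\cB$: fix a row set $S$ with $|S| = s$. Then $\sum_{i \in S}\sum_{j=1}^{\lceil t\rceil}\delta_{i,j}$ is a sum of at most $sn$ independent $\Ber(p)$ variables, with mean at most $ds$. By a Chernoff bound, the probability that this exceeds $C(ds + \log(n/s))$ is at most $\exp(-C'(ds + \log(n/s)))$ for a suitable absolute $C'$; choosing $C$ large enough this is at most $\exp(-10s\log(n/s) - 10\log n) \le (s/n)^{10s} \cdot n^{-10}$. Summing over the $\binom{n}{s} \le (en/s)^s$ choices of $S$ and then over $s = 1, \dots, n$ gives a geometric-type series bounded by $n^{-5}$, say; the same argument applies to columns. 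The slight subtlety is getting the $\log(n/|S|)$ term to beat the entropy $\binom{n}{|S|}$ uniformly over all scales of $|S|$, but this is exactly what the $\log$ term is designed to absorb, and it is a routine first-moment-plus-Chernoff computation.

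For $\cQ$: for each fixed dyadic $H \in [1, n^4]$, an entry exceeds $8H/\beta$ in magnitude only if $\delta_{i,j} = 1$ and $|\xi| > 8H/\beta$. By Markov applied to $\EE|\xi|^2 = 1$, $\PP(|\xi| > 8H/\beta) \le \beta^2/(64 H^2) =: q_H$, so the number of large entries is stochastically dominated by $\mr{Bin}(n^2, p q_H)$ with mean $\le dn\beta^2/(64H^2) \le dn/H^2$. A Chernoff bound shows this exceeds $2dn/H^2 + (\log n)^2$ with probability at most $\exp(-c(\log n)^2) \le n^{-10}$ (the additive $(\log n)^2$ handles the regime $dn/H^2 = o((\log n)^2)$, where the mean is tiny and one uses the Poisson-tail form of Chernoff). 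Union bounding over the $O(\log n)$ dyadic values of $H$ — which suffices by monotonicity to control all $H \in [1, n^4]$ up to constants — plus the trivial bound $\PP(\max |A_t| > n^3) \le n^2 \PP(|\xi| > n^3) \le n^{-4}$, gives $\PP(A_t \notin \cQ) \le n^{-5}$.

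For $\mc{R}$: fix $\ell \ge 1$ and set $L = (dn/(\beta\ell))^5$; a row $i$ has $\sum_j |(A_t)_{i,j}| > L/\beta$ only if one of its $\le n$ entries exceeds $L/(\beta n)$ in magnitude, which forces $\delta_{i,j} = 1$ and $|\xi| > L/(\beta n)$, an event of probability $\le p\beta^2 n^2/L^2$ per entry by Markov; so the expected number of such rows is at most $n \cdot n \cdot p\beta^2 n^2/L^2 = dn^4\beta^2/L^2$, which by the choice of $L$ is $\le (\beta\ell/(dn))^{10} \cdot dn^4 \beta^2 \ll \alpha(\ell) d\ell / 4$ with enormous room to spare (the fifth power in $L$ is far more than needed). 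A Markov/Chernoff bound then makes the probability of exceeding the threshold at most $n^{-10}$ for each $\ell$; since the bound is vacuous once $\alpha(\ell) d\ell/4 \ge n$ we only need $\ell$ up to $O(n)$, and union bounding over these (or noting the threshold only depends on $\ell$ through $\alpha(\ell)\ell$, which is piecewise constant at $O(\log n)$ scales) costs at most a factor $n$, giving $\PP(A_t \notin \mc{R}) \le n^{-5}$. Combining the three estimates, $\PP(A_t \in \cB \cap \cQ \cap \mc{R}) \ge 1 - 3n^{-5} \ge 1 - n^{-3}$ for $n$ large, as claimed. The main obstacle, such as it is, is purely bookkeeping: making sure each tail bound beats the union over all set sizes $|S|$ or all scales of $H, \ell$ uniformly, which is why each conclusion is stated with a built-in additive $\log$ or $(\log n)^2$ slack and with parameters ($L$ to the fifth power, the constant in $\cB$) chosen generously.
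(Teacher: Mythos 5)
Your argument for $\cQ$ is correct and is essentially the paper's: Markov on $\EE|\xi|^2$ at each dyadic $H$, a Chernoff bound on the resulting binomial count with the additive $(\log n)^2$ slack handling the small-mean regime, and a crude $n^2\,\PP(|\xi|>n^3)$ bound for the maximum entry.

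There are, however, two genuine gaps. For $\cB$, you replace the actual target $\sum_{i\in S}\sum_j\delta_{i,j}\le C\,|S|\bigl(d+\log(n/|S|)\bigr)$ with the much smaller (hence stronger, and false to prove) threshold $C\bigl(d|S|+\log(n/|S|)\bigr)$: you have dropped the factor $|S|$ in front of $\log(n/|S|)$. The tail estimate you then invoke, $\exp\bigl(-C'(ds+\log(n/s))\bigr)\le\exp\bigl(-10s\log(n/s)-10\log n\bigr)$, would require $C'\bigl(ds+\log(n/s)\bigr)\ge 10\,s\log(n/s)$ with $C'$ an absolute constant, and this already fails for $s=\sqrt{n}$ and $d$ a large constant, where the left side is $\Theta(\sqrt{n})$ and the right side is $\Theta(\sqrt{n}\log n)$; consequently the union bound over the $\binom{n}{s}$ sets does not close. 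With the corrected threshold $Cs\bigl(d+\log(n/s)\bigr)$ a direct Chernoff-plus-union-bound argument does work (handling $s$ close to $n$ via $\binom{n}{s}=\binom{n}{n-s}$), but the paper in fact sidesteps the set-entropy entirely: it bounds, for each threshold level $k$, the number of rows with row-sum at least $C\log(n/k)$, and then observes that for a fixed size the worst $S$ consists of the heaviest rows, reducing $2^n$ events to $O(n)$.

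For $\mc{R}$, passing from ``$\sum_j|(A_t)_{i,j}|>L/\beta$'' to ``some entry of row $i$ exceeds $L/(\beta n)$'' and then applying Markov per entry costs a factor of order $n^2$ relative to controlling the row sum directly, and the generous fifth power in $L$ does not absorb it. Your expected count of bad (row,column) pairs is $dn^3\beta^2/L^2=\beta^{12}\ell^{10}/(d^9n^7)$ (your arithmetic has an extra factor $n$, immaterial here), while the target is $\alpha(\ell)d\ell/4$; the ratio is $\Theta\bigl(\beta^{12}\ell^9(\log(n/\ell))^2/(d^{10}n^7)\bigr)$, which is $\gg 1$ already at the endpoint of the range used later, $\ell=(n/(2d))\log_{(2)}d$ (where the target $\alpha(\ell)d\ell/4$ is of order $n\log_{(2)}d/(\log d)^2$, so much less than $n$ and not vacuous), once $d$ grows slower than roughly $n^{2/19}$. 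Since the hypotheses permit $d\to\infty$ arbitrarily slowly, the argument cannot close there. The paper instead applies Chebyshev to the row sum $\sum_j\delta_{i,j}|\xi_{i,j}|$ directly, obtaining a per-row failure probability $O(d/L^2)$ with no spurious factor of $n^2$, after which the union/Chernoff step goes through with ample room.
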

Note by construction that it suffices to check Lemma~\ref{lem:neighbor-size} only when $t=n$. The proof of all of these lemmas are entirely standard union bound computations and are deferred to Appendix~\ref{sec:facts}.

\section{Spreadness of near-kernel vectors from graph quasi-randomness}\label{sec:ker-vectors}

The goal of this section is to prove the following lemma, which says that if $A_t \in \cE_r$ and $v$ is ``close'' to the small right-singular vectors of $A_{t,z}$ then it has $(n/(2d))\log_{(2)} d \gg n/d$ coordinates that are bounded away from zero by $\eta_r n^{-1/2}$. The point here is that each new row and column has random support of size $d$, on average. So, crucially, the intersection of these supports is typically $\gg 1$. For this lemma we define the notation, for $v \in \C^n$ and $x \geq 0$, $\l(v; x) = |\{ i\colon |v_i| \geq x \}|$.

\begin{lemma}\label{cor:spread-vector}
For $t\geq r \geq n(1-d^{-1/4}) $, let $A_t \in \cE_{r}$ and $1\leq |z| \leq d$. Put $k = c^\ast(\lceil t\rceil-r+1)$, where $c^\ast$ is as in Section~\ref{sec:UNE}, and let $v \in \C^{\lceil t\rceil}$ satisfy
\[ \snorm{A_{t,z} v}_2\le d^{1/2}\eta_{r} \hspace{1mm} \emph{ and } \hspace{2mm} \l(v ; k^2n^{-5/2}) \geq k     .\]
Then
\[ \lambda(v ; \eta_{r}d^{1/2}k^{-1/2}) \geq (n/(2d))\log_{(2)} d.\]
\end{lemma}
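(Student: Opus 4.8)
The plan is to run a greedy expansion argument on the support of $v$, using the unique-neighbourhood expansion property $\cU_r$ together with the hypothesis $\snorm{A_{t,z}v}_2 \le d^{1/2}\eta_r$ to propagate a lower bound on $|v_i|$ from a set of coordinates to its unique-neighbourhood. Concretely, I would let $S_0$ be a set of $k$ coordinates of $v$ witnessing $\lambda(v;k^2 n^{-5/2})\ge k$, so every coordinate in $S_0$ has $|v_i| \ge k^2 n^{-5/2}$. I then build an increasing chain of column-sets $S_0 \subseteq S_1 \subseteq \cdots$ by repeatedly setting $S_{j+1} = S_j \cup U(S_j)$, stopping once $|S_j| \ge (n/(2d))\log_{(2)}d$. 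Along the way I maintain the invariant that every coordinate $i \in S_j$ satisfies $|v_i| \ge x_j$ for an explicit decreasing sequence $x_j$, and I need to check that $x_j$ stays above $\eta_r d^{1/2} k^{-1/2}$ throughout — this is the quantitative heart of the argument.

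The key propagation step is as follows. Suppose $i \in U(S_j)\setminus S_j$, so there is a unique $\ell \in S_j$ with $(A_t)_{i,\ell}\neq 0$ and $|(A_t)_{i,\ell}|\ge\beta$. Looking at the $i$-th coordinate of $A_{t,z}v = d^{-1/2}A_t v - zv$ (adjusting for whether $i$ indexes a genuine row or, at half-integer times, the identity shift), we get
\[
\big| d^{-1/2}(A_t)_{i,\ell} v_\ell \big| \le |(A_{t,z}v)_i| + |z||v_i| + d^{-1/2}\Big| \sum_{\ell' \in S_j,\, \ell'\neq \ell}(A_t)_{i,\ell'} v_{\ell'}\Big| + d^{-1/2}\Big|\sum_{\ell'\notin S_j}(A_t)_{i,\ell'}v_{\ell'}\Big|.
\]
The first sum on the right vanishes by uniqueness of the neighbour in $S_j$. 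For coordinates $i \in U(S_j)\cap S_j$ one argues similarly but with no in-$S_j$ contribution at all. Rearranging gives a lower bound $|v_\ell| \le (\text{stuff})$, i.e.\ an \emph{upper} bound on $|v_\ell|$ in terms of $|v_i|$; but I actually want the reverse — so the cleaner formulation is to run the chain the other way, showing that if $i$ has a \emph{small} coordinate then its unique $S_j$-neighbour is forced small, and hence to derive a contradiction. Thus I would instead argue by contradiction: let $W = \{i : |v_i| < \eta_r d^{1/2}k^{-1/2}\}$ be the set of ``small'' coordinates and suppose $|W^c| < (n/(2d))\log_{(2)}d$; take $S = W^c \supseteq S_0$ (it has size between $c^\ast(\lceil t\rceil - r + 1)$ and the upper cutoff, so $\cU_r$ applies), so $|U(S)| \ge \alpha(|S|)d|S|$. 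Each $i \in U(S)\setminus S$ then lies in $W$, and via the displayed identity its unique neighbour $\ell = \ell(i) \in S$ satisfies $d^{-1/2}\beta|v_\ell| \le |(A_{t,z}v)_i| + |z||v_i| + d^{-1/2}\|A_t\|_{\text{row }i,\, S^c}\cdot\|v_{S^c}\|$. Summing $|(A_{t,z}v)_i|^2$ over $i$ is controlled by $\snorm{A_{t,z}v}_2^2 \le d\eta_r^2$; the term $|z||v_i| \le d \cdot \eta_r d^{1/2}k^{-1/2}$ is small since $|z|\le d$; and the tail contribution is controlled using $\cR$ (which bounds how many rows have huge $\ell^1$ norm) and $\cB$ (which bounds row-degrees). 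This shows most of the witnesses $\ell \in S$ receive a small bound $|v_\ell| \lesssim \eta_r d^{1/2}k^{-1/2}$ — but then counting: the map $i \mapsto \ell(i)$ from $U(S)\setminus S$ has fibres of bounded size, so $|U(S)| \le (\text{bounded}) \cdot |\{\ell : |v_\ell| \lesssim \eta_r d^{1/2}k^{-1/2}\}|$, and combined with $|U(S)| \ge \alpha(|S|)d|S|$ and the normalization $\|v\|_2 \le \cdots$ (so not too many coordinates can be moderately large) one reaches a contradiction with the assumed upper bound on $|W^c|$.

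The main obstacle I expect is the bookkeeping around the ``tail'' term $d^{-1/2}\sum_{\ell'\notin S}(A_t)_{i,\ell'}v_{\ell'}$: a priori the entries of $A_t$ can be as large as $n^3$ (only $\cQ$ controls how \emph{many} are large) and $v$ is only $\ell^2$-controlled, so a naive Cauchy–Schwarz bound of this sum is far too lossy. The fix is to split $\ell'\notin S$ according to the size of $|v_{\ell'}|$: the coordinates with $|v_{\ell'}|$ genuinely tiny contribute negligibly, while the coordinates with $|v_{\ell'}|$ not tiny are few in number (because $\|v\|_2 = O(1)$ forces $|\{\ell' : |v_{\ell'}| \ge \tau\}| \le \tau^{-2}$), and for those one uses $\cR$ to say that only $\le \alpha(\ell)d\ell/4$ rows $i$ have $\ell^1$-norm exceeding the relevant threshold $L/\beta$. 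This lets one absorb the tail contribution into at most a constant fraction of $U(S)$, so that still a positive proportion — at least, say, $\alpha(|S|)d|S|/2$ — of the unique-neighbours genuinely force their witness small, which is enough to close the counting argument. The remaining ingredients (choosing the precise constants in $\alpha$, checking $S$ falls in the admissible range for $\cU_r$, handling the integer/half-integer distinction in the identity shift) are routine given the definitions in Section~\ref{sec:UNE}.
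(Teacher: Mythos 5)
The core idea you have (unique--neighbourhood expansion to force $|v_{\ell(i)}|$ down from small $|(A_{t,z}v)_i|$, with the tail controlled by $\cB,\cQ,\cR$) is the right machinery, but your one-shot contradiction with $S = W^c = \{i : |v_i| \ge \eta_r d^{1/2}k^{-1/2}\}$ has a genuine gap: the coordinates in $W$ are only bounded above by $\eta_r d^{1/2}k^{-1/2}$, and nothing prevents an adversarial $v$ from having $\Theta(n)$ coordinates with $|v_{\ell'}|$ just barely below that threshold. A typical row $i \in U(S)\setminus S$ has $\approx d$ nonzero entries, most landing in $W$; with $|\xi_{i,\ell'}| = \Theta(1)$ and $|v_{\ell'}| \approx \eta_r d^{1/2}k^{-1/2}$ for $d$ of them, the tail term $\big|\sum_{\ell'\notin S}(A_t)_{i,\ell'}v_{\ell'}\big|$ can be of order $d\cdot\eta_r d^{1/2}k^{-1/2}$, which swamps the signal $\beta|v_{\ell(i)}| \ge \beta\eta_r d^{1/2}k^{-1/2}$ by a factor of $d/\beta$. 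Your suggested fix (split $\ell'\notin S$ by the size of $|v_{\ell'}|$, use $\|v\|_2 = O(1)$ to bound how many are ``not tiny,'' and $\cR$ to discard rows with big $\ell^1$ norm) does not close this: the problematic coordinates are exactly those with $|v_{\ell'}|$ sitting just below the cutoff, and $\|v\|_2 = O(1)$ does not rule out there being a full row's worth of them; $\cR$ only controls the count of rows with globally huge $\ell^1$ mass, not the intersection of a typical row with this near-threshold band.

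The paper avoids precisely this issue by being \emph{multi-scale and iterative} rather than one-shot. It sets $k_0 = k$, $k_i = k_{i-1} + g(k_{i-1})$, and proves that the inter-level decay ratio $v^*_{k_i}/v^*_{k_{i-1}}$ cannot fall below $(\beta k/(dn))^7$: if it did, Lemma~\ref{lem:decreasing-coordinate} would apply at that scale, and there the tail is controlled because the $g(k_{i-1})$ coordinates in $S^*\setminus S$ are removed using event $\cB$ (they can only touch $O(g(\ell)(d+\log(n/g(\ell))))$ rows), while the remaining $j\notin S^*$ have $|v_j| \le v^*_\ell L^{-1}(\beta\ell/(dn))^2$ --- a \emph{polynomial} gap that absorbs the $L/\beta$ row-$\ell^1$-norm factor. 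This controlled-decay invariant $v^*_{k_i} \ge v^*_{k_{i-1}}\cdot\delta$ is what rules out a flat near-threshold plateau in $W$, and iterating $\tau = O((\log(n/k))^4)$ times from $k$ up to $(n/(2d))\log_{(2)}d$ gives the claimed spread. Without that intermediate-scale bookkeeping there is no way to bound the tail; you need to build up the lower bound through $\approx(\log(n/k))^4$ intermediate thresholds rather than jumping directly to the final one.
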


\vspace{2mm}

\noindent

We prove this lemma by using the unique neighbourhood expansions. This link starts to become apparent with the following simple observation. 

For this we define a few uses of notation. For $v \in \C^n$ and $S \subseteq [n]$, we let $v_S \in \C^n$ be the vector with $(v_S)_i= v_i$ for $i \in S$ and $0$ otherwise. It is also 
useful to define, for a vector $v = (v_1,\ldots,v_n)$, the vector $v^{\ast} = (v_{1}^{\ast},\ldots, v_n^{\ast})$ where the entries of $v^{\ast}$ are the $|v_i|$, but have been permuted so that $v^{\ast}_1 \geq \cdots \geq v^{\ast}_n$.

\begin{observation}\label{obs:NU-obs}
For $\ell\leq t$, let $v \in \C^{\lceil t\rceil}$ and let $S$ be the set of the $\ell$ largest coordinates of $v$ in absolute value. If $|z| \geq 1$ then $|(A_{t,z}v_S)_i| \geq v^{\ast}_{\ell}\beta$ for all $i \in U(S)$. 
\end{observation}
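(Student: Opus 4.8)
The plan is to unpack the two-part definition of $U(S)$ and check the claimed lower bound on each piece separately. Fix $i \in U(S)$ and recall that $(A_{t,z}v_S)_i = \sum_{j} (A_{t,z})_{i,j} (v_S)_j = \sum_{j \in S} (A_t)_{i,j} v_j - z (v_S)_i$, using that $v_S$ is supported on $S$ and that $A_{t,z} = A_t - zI$ (with the appropriate rectangular identity when $t$ is half-integral, but $i$ ranges over the index set where the diagonal makes sense, so this is harmless). I will split into the two cases $i \notin S$ and $i \in S$ coming from the definition $U(S) = (U(S)\setminus S) \cup (U(S)\cap S)$.

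\textbf{Case $i \in U(S)\setminus S$.} Here $(v_S)_i = 0$, so $(A_{t,z}v_S)_i = \sum_{j\in S}(A_t)_{i,j}v_j$, and by definition of $U(S)\setminus S$ there is a \emph{unique} $j_0 \in S$ with $(A_t)_{i,j_0}\neq 0$, and moreover $|(A_t)_{i,j_0}| \geq \beta$. Hence $(A_{t,z}v_S)_i = (A_t)_{i,j_0} v_{j_0}$, so $|(A_{t,z}v_S)_i| = |(A_t)_{i,j_0}|\,|v_{j_0}| \geq \beta |v_{j_0}|$. Since $j_0 \in S$ and $S$ consists of the $\ell$ largest coordinates of $v$ in absolute value, $|v_{j_0}| \geq v^\ast_\ell$, giving $|(A_{t,z}v_S)_i| \geq \beta v^\ast_\ell$ as claimed.

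\textbf{Case $i \in U(S)\cap S$.} Here the defining condition is $(A_t)_{i,j} = 0$ for all $j \in S$, so the sum $\sum_{j\in S}(A_t)_{i,j}v_j$ vanishes entirely and $(A_{t,z}v_S)_i = -z(v_S)_i = -z v_i$. Therefore $|(A_{t,z}v_S)_i| = |z|\,|v_i| \geq |v_i|$ since $|z|\geq 1$, and again $i \in S$ forces $|v_i|\geq v^\ast_\ell$, so $|(A_{t,z}v_S)_i| \geq v^\ast_\ell \geq \beta v^\ast_\ell$ because $\beta \leq 1$. Combining the two cases proves the observation. There is no real obstacle here; the only points to be careful about are that the two parts of $U(S)$ are handled by genuinely different mechanisms (the ``unique large neighbour'' structure versus the diagonal shift), that one uses $|z|\geq 1$ only in the second case, and that $\beta \leq 1$ is used to make the bound uniform across both cases.
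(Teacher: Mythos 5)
Your proof is correct and takes essentially the same route as the paper: split on $i \in U(S)\setminus S$ versus $i \in U(S)\cap S$, use the unique large off-diagonal entry in the first case, and use that the $S$-row of $A_t$ vanishes (including the diagonal) so that $A_{t,z}$ contributes only $-z v_i$ in the second. You are slightly more explicit than the paper in noting that $|z|\ge 1$ is used only in the second case and that $\beta\le 1$ is needed to close the bound there, but the argument is the same.
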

\begin{proof}
We consider two cases. If $i\in U(S)\setminus S$ there is unique $j\in S$ with $(A_{t,z})_{i,j}\neq 0$. For this $j$, we additionally have $|\xi_{i,j}|\ge \beta$. Thus the observation is easily proved in this case.

For $i\in U(S)\cap S$ we have $(A_{t,z})_{i,j} = 0$ for all $j\in S$ with $j \not= i$. So 
\[|(A_{t,z}v_S)_i|=|(A_{t,z})_{i,i}v_i| = |(A-zI)_{i,i}v_i|=|z||v_i| \geq v_\ell^\ast\beta,\]
which proves the observation.
\end{proof}

We remark briefly that in the above lemma we restricted attention to $|z|\ge 1$. When considering convergence to the circular law, we will need to normalize $A_{t,z}$ by a factor of $d^{-1/2}$ and thus the condition $|z|\ge 1$ will effectively become $|z|\ge d^{-1/2}$. This is crucial as we may only exclude a measure $0$ set of $z$ when proving convergence to the circular law. (Furthermore when working with $A_{t,z}^{\ast}$ therefore we will only exclude $|z|\ge d^{-1/2}$; this difference corresponds precisely to this rescaling.)
\vspace{2mm}

Recall that above we defined the function $\alpha(x) = (\log (n/x) )^{-2}$. Here we define the function 
\begin{equation}\label{eq:g(k)-def}  
g(x)= \left\lceil \frac{d\alpha(x)x}{C'(d+\log (n/x))}\right\rceil,
\end{equation} 
where $C'$ is a sufficiently large constant. (It is chosen based on the $O(\cdot)$ in the definition of the event $\cB$.)

We prove Lemma~\ref{cor:spread-vector} by iterating the following lemma. 
This lemma says that if the mass of $v$ is clustered on fewer than $(n/(2d))\log_{(2)}d$ coordinates then $A_{t,z}v$ has many large coordinates and is therefore is not close to the small singular vectors.

\begin{lemma}\label{lem:decreasing-coordinate}
For $r \leq t$ let $A_t \in \cE_r$ and let $c^\ast(\lceil t\rceil-r+1) \leq \ell \leq (n/(2d))\log_{(2)}d$, where $c^\ast$ is as in Section~\ref{sec:UNE}. If $v \in \C^{\lceil t\rceil} $ satisfies  
\[ v^{\ast}_{\ell+g(\ell)} \leq v^{\ast}_{\ell} \left( \frac{\beta \ell}{ d n} \right)^7.\] 
and $1\leq |z| \leq d$ then
\begin{equation} \label{eq:large-coords}
\l\big( A_{t,z}v\, ;\, \beta v_{\ell}^{\ast}/2 \big) \ge \alpha(\ell)d\ell/2. \end{equation}
\end{lemma}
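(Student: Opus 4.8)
The plan is to apply Observation \ref{obs:NU-obs} to $S$ the set of the $\ell$ largest coordinates of $v$, but to do so we must control the ``contamination'' coming from the tail $v - v_S$. Let me write $v = v_S + w$ where $w = v_{[\lceil t\rceil]\setminus S}$. By Observation \ref{obs:NU-obs}, since $|z|\ge 1$ we have $|(A_{t,z}v_S)_i| \ge \beta v_\ell^\ast$ for all $i\in U(S)$, and since $A_t\in\cU_r$ (part of $\cE_r$) and $c^\ast(\lceil t\rceil -r+1)\le \ell \le (n/(2d))\log_{(2)}d$, we have $|U(S)|\ge \alpha(\ell)d\ell$. So it would suffice to show that for all but a negligible fraction of $i\in U(S)$, the contribution $|(A_{t,z}w)_i|$ is at most $\beta v_\ell^\ast/2$; then $|(A_{t,z}v)_i|\ge \beta v_\ell^\ast - \beta v_\ell^\ast/2 = \beta v_\ell^\ast/2$ on a set of size $\ge \alpha(\ell)d\ell$ (after absorbing the negligible loss, perhaps by starting from a slightly larger set or adjusting constants — I'll need to be a touch careful here, maybe work with $U(S)$ restricted to the first $\ell + g(\ell)$ coordinates removed, i.e. apply the expansion to $S' = $ top $\ell + g(\ell)$ coordinates).

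The main obstacle — and the reason the hypothesis $v^\ast_{\ell+g(\ell)}\le v^\ast_\ell(\beta\ell/(dn))^7$ is present — is bounding $\|A_{t,z}w\|$ in a way strong enough to beat $\beta v_\ell^\ast$ on most rows of $U(S)$. Split $w = w' + w''$ where $w'$ keeps coordinates ranked $\ell+1,\dots,\ell+g(\ell)$ and $w''$ keeps the rest (ranks $> \ell+g(\ell)$). For $w''$: every coordinate of $w''$ has size at most $v^\ast_{\ell+g(\ell)}\le v^\ast_\ell(\beta\ell/(dn))^7$, so $\|w''\|_2 \le \sqrt{\lceil t\rceil}\,v^\ast_\ell(\beta\ell/(dn))^7 \le v^\ast_\ell (\beta\ell/(dn))^{6}$ say (using $t\le n$ and that $\beta\ell/(dn)$ is small), and hence $\|A_{t,z}w''\|_2 \le (\|A^\ast\| d^{1/2} + |z|)\|w''\|_2$. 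Here I use $|z|\le d$ and a crude operator-norm bound on $A_t$ (available from the event $\cQ$, which controls large entries, together with $\cB$ which controls row sums — giving $\|A_t\|_{\mathrm{op}} = \mathrm{poly}(n)$, certainly $\le n^5$), so $\|A_{t,z}w''\|_2 \le n^{10} v^\ast_\ell(\beta\ell/(dn))^6 \le \beta v_\ell^\ast \cdot (\text{something} \ll 1/\sqrt{\alpha(\ell)d\ell})$ after choosing the exponent $7$ large enough; thus $|(A_{t,z}w'')_i| < \beta v_\ell^\ast/4$ for ALL $i$. (The exponent $7$ in the hypothesis is exactly the slack that makes this polynomial-factor loss harmless.)

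For the middle piece $w'$, which has only $g(\ell)$ nonzero coordinates each of size $\le v^\ast_\ell$: here I cannot afford an $\ell^\infty\to\ell^\infty$ bound row by row, but I can count. The support of $w'$ is a set $T$ of $g(\ell)$ columns. The rows $i$ for which $(A_{t,z}w')_i\ne 0$ are contained in the neighbourhood of $T$ in the support graph of $A_t$; by the event $\cB$ (expansion / bounded average degree on small sets) the number of such rows, or more precisely the number of $(i,j)$ incidences with $j\in T$, is $O((d + \log(n/g(\ell)))g(\ell))$. By the definition \eqref{eq:g(k)-def} of $g$, this is $O(d\alpha(\ell)\ell/C') \le \alpha(\ell)d\ell/100$ once $C'$ is large. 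Hence the set of rows receiving any contribution from $w'$ has size at most $\alpha(\ell)d\ell/100$, and we simply discard these from $U(S)$: on the remaining $\ge \alpha(\ell)d\ell(1 - 1/100) \ge \alpha(\ell)d\ell \cdot (99/100)$ rows (and here I should set things up so the final bound is genuinely $\ge\alpha(\ell)d\ell$ — e.g. by invoking $\cU_r$ at a set of size $\ell$ but having arranged a small multiplicative cushion, or by noting the statement \eqref{eq:large-coords} can absorb the constant into the $\cU_r$ definition's choice of $\alpha$; I'll reconcile constants at the end) we have simultaneously $|(A_{t,z}v_S)_i|\ge \beta v_\ell^\ast$, $|(A_{t,z}w')_i| = 0$, and $|(A_{t,z}w'')_i| < \beta v_\ell^\ast/4$, giving $|(A_{t,z}v)_i| \ge \beta v_\ell^\ast/2$. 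That yields $\lambda(A_{t,z}v\,;\,\beta v_\ell^\ast/2)\ge \alpha(\ell)d\ell$, as required. The one genuinely delicate point to get right is the bookkeeping between $|U(S)|$, the $w'$-exceptional rows, and the target count $\alpha(\ell)d\ell$ — everything else is a routine operator-norm estimate plus the combinatorial degree bound from $\cB$.
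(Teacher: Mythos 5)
Your overall strategy is recognisably the same as the paper's: peel off the top $\ell$ coordinates $v_S$, apply Observation~\ref{obs:NU-obs} together with $\cU_r$ to get $|U(S)|\ge\alpha(\ell)d\ell$ rows on which $|(A_{t,z}v_S)_i|\ge \beta v^\ast_\ell$, and then control the contamination from the remaining coordinates by splitting them into a middle band of size $\approx g(\ell)$ (handled by counting incidences via $\cB$, which you do correctly) and an extreme tail. The middle-band argument is essentially identical to the paper's estimate of its set $B_2$.

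The gap is in the extreme tail $w''$. You bound $\|A_{t,z}w''\|_2$ by $\|A_{t,z}\|_{\mathrm{op}}\cdot\sqrt{n}\cdot v^\ast_{\ell+g(\ell)}$, using $\|A_t\|_{\mathrm{op}}=\mathrm{poly}(n)$ (from the entrywise bound $n^3$ in $\cQ$), and then claim the $\mathrm{poly}(n)$ blow-up is killed by $v^\ast_{\ell+g(\ell)}\le v^\ast_\ell(\beta\ell/(dn))^7$. But $(\beta\ell/(dn))^7$ is small of order $\mathrm{poly}(d^{-1},\ell/n)$, and this does \emph{not} dominate $\mathrm{poly}(n)$ in the whole range of the lemma. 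Concretely, when $\ell$ is near the top of its allowed range, $\ell\asymp(n/d)\log_{(2)}d$, we have $\beta\ell/(dn)\asymp d^{-2}\log_{(2)}d$, so $(\beta\ell/(dn))^7$ is only $\approx d^{-14}$; multiplying by the $\mathrm{poly}(n)$ operator-norm and $\sqrt{n}$ factors gives something that is $\gg 1$ whenever $d=o(n^c)$ for small $c$ --- exactly the sparse regime the theorem is about. (Your intermediate step ``$\sqrt{\lceil t\rceil}\,v^\ast_\ell(\beta\ell/(dn))^7\le v^\ast_\ell(\beta\ell/(dn))^6$'' already requires $\sqrt{n}\le dn/(\beta\ell)$, which fails for $\ell$ near $n/d$ unless $d\gtrsim\sqrt{n}$.) Replacing the uniform $\ell^\infty$ conclusion by a Markov/counting variant of the same $\ell^2$ bound does not rescue it for the same reason.

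What the paper does instead is avoid the global operator norm entirely and work row by row. It introduces the set $B_1$ of rows whose total absolute row sum exceeds $L/\beta$ with $L=(dn/(\beta\ell))^5$, and uses the event $\mc{R}$ (which you never invoke --- it is the component of $\cE_r$ specifically designed for this) to bound $|B_1|\le\alpha(\ell)d\ell/4$. On rows outside $B_1\cup B_2$, the tail contribution is bounded by $(\text{row sum})\cdot\max_{j\notin S^\ast}|v_j|\le(L/\beta)\cdot v^\ast_\ell L^{-1}(\beta\ell/(dn))^2=v^\ast_\ell(\beta\ell/(dn))^2/\beta$, which is $\le\beta v^\ast_\ell/2$ uniformly over the whole range $1\le\ell\le(n/(2d))\log_{(2)}d$ because $\ell/(dn)\le 1/d^2$. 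The crucial point is that the ``gain'' $(\beta\ell/(dn))^2/\beta$ never has to fight a $\mathrm{poly}(n)$ factor, only the row-sum threshold $L/\beta$, and $L$ is deliberately chosen as a power of $dn/(\beta\ell)$ so that the two cancel exactly. You should replace your $w''$ estimate by this per-row argument, using $\mc{R}$ to discard the few rows with large sums.
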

\begin{proof}
Let $S$ be the set of the $\ell$ largest coordinates of $v$ in absolute value. Write $v = x + y$, where $x = v_S$ and $y = v_{S^c}$ and note that by  Observation~\ref{obs:NU-obs} we have 
\[ |( A_{t,z}x)_i| \geq |(A_{t,z}x)_i| - |(A_{t,z}y)_i| \geq v^{\ast}_\ell\beta - |(A_{t,z}y)_i|, \]for all $i \in U(S)$. Since $A_t \in \cE_r$ we have that $|U(S)| \geq  \alpha(\ell)d\ell$ and thus it suffices to prove
\begin{equation}\label{eq:few-bad-in-NU}
\big|\big\{ i \in U(S) \colon |(A_{t,z}y)_i| > \beta v_\ell^{\ast}/2 \big\}\big| < \alpha(\ell)d\ell/2.
\end{equation} 
To prove \eqref{eq:few-bad-in-NU}, let $B$ be the set in \eqref{eq:few-bad-in-NU} and define $S^\ast \supseteq S$ to be the set of indices of the $\ell+g(\ell)-1$ largest coordinates of $v$ in magnitude.  If $i \in B$ then either row $i$ has unusually large magnitude \emph{or} row $i$ has a non-zero entry in $S^{\ast}\setminus S$. More precisely, define 
\[B_1  = \bigg\lbrace i \colon \sum_{j=1}^{\lceil t\rceil}|\delta_{i,j}\xi_{i,j}|\ge L/\beta \bigg\rbrace \hspace{2mm} \text{ and } \hspace{2mm} B_2 = \bigg\lbrace i  \colon \sum_{j\in S^\ast\setminus S}\delta_{i,j}>0 \bigg\rbrace,\] 
where we have set $L = ( d n/(\beta \ell))^5$. We now claim that $ B \subseteq B_1 \cup B_2 $. To see this, assume $i \notin B_1 \cup B_2$ and observe that 
\[|(A_{t,z}y)_i| \le\sum_{j\notin S}|\delta_{i,j}\xi_{i,j}-z\1_{j=i}||v_j|=\sum_{j\notin S^\ast}|\delta_{i,j}\xi_{i,j}-z\1_{j=i}||v_j|,\]
since $i \not\in B_2$. Using that $|v_{j}| \leq v_\ell^{\ast}L^{-1}(\beta \ell/(dn))^2$ for $j \not\in S^{\ast}$, we see the above is at most 
\[\left(\frac{\beta\ell}{dn}\right)^2 \cdot \frac{v_\ell^\ast}{L} \cdot \bigg(d+\sum_{j\notin S^\ast}|\delta_{i,j}\xi_{i,j}|\bigg)
\le\left(\frac{\beta}{d}\right)^2 \cdot \frac{v_\ell^\ast}{L} \cdot (d+ \beta^{-1}L)\le \beta v_\ell^\ast/2,\]
where we have used that $|z| \leq d $, $\ell\leq n$, and $i \notin B_1$. Thus $B \subseteq B_1 \cup B_2$.

To conclude \eqref{eq:few-bad-in-NU}, we just need to show $|B_1|,|B_2| < \alpha(\ell) d \ell/4$. Since $A_t \in \cE_r$, the definition of event $\cB$ tells us
\[|B_2| \leq \sum_{i \in S^{\ast}\setminus S} \sum_{j} \delta_{i,j} 
\leq O(g(\ell)(d + \log(n/g(\ell)))) \leq \alpha(\ell)d \ell /4.\]
Recall that in the definition of $g$, $C'$ is chosen sufficiently large. Additionally, the definition of event $\mc{R}$ tells us $|B_1| \leq \alpha(\ell)d\ell/4$. This completes the proof.
\end{proof}

\vspace{2mm}

We now iterate Lemma~\ref{lem:decreasing-coordinate} to obtain Lemma~\ref{cor:spread-vector}. To understand how many times we need to iterate Lemma~\ref{lem:decreasing-coordinate}, we need the following basic numerical fact. For this, we think of $k$ as in Lemma~\ref{cor:spread-vector} and we define the sequence $(k_t)_{t\geq 0}$, by setting $k_0 = k$ and then defining
\[ k_i = k_{i-1} + g(k_{i-1}),\]
for all $i \geq 1$.

\begin{fact}\label{fact:number-of-iterations}
Let $\tau$ be the minimum value for which $k_{\tau} > (n/(2d))\log_{(2)} d$. Then $\tau = O( (\log (n/k) )^4 )$.
\end{fact}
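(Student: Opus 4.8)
The plan is to show that the sequence $(k_i)$ grows \emph{geometrically}, with a multiplicative factor bounded below by $1+\Theta(W^{-3})$ where $W:=\log(n/k)$, until it exceeds $N:=(n/(2d))\log_{(2)}d$; since it starts at $k$, this will force $\tau-1\le \log(N/k)/\log(1+\Theta(W^{-3}))=O(W^4)$. Before that, a word on trivialities: the hypothesis $r\geq n(1-d^{-1/4})$ of Lemma~\ref{cor:spread-vector} gives $k=c^\ast(\lceil t\rceil-r+1)\le 2c^\ast n d^{-1/4}$, so $W\ge \tfrac14\log d-O(1)\ge 1$ for $n$ (hence $d$) large, and in particular $1=O(W^4)$. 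We may thus assume $k<N$, since otherwise $k_0=k\ge N$ forces $\tau\le 1$ and there is nothing to prove.

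The one estimate to carry out is a uniform lower bound on the increments $g(k_i)$. For every $i<\tau$ we have $k\le k_i\le N\le n$; since $\alpha(x)=(\log(n/x))^{-2}$ is increasing and $x\mapsto\log(n/x)$ decreasing on $(0,n)$, this gives $\alpha(k_i)\ge\alpha(k)=W^{-2}$ and $\log(n/k_i)\le W$. Plugging these into the definition~\eqref{eq:g(k)-def} of $g$ and discarding the ceiling,
\[ g(k_i)\;\ge\;\frac{d\,\alpha(k_i)\,k_i}{C'\bigl(d+\log(n/k_i)\bigr)}\;\ge\;\frac{d\,k_i}{C'\,W^2\,(d+W)}, \]
so $k_{i+1}=k_i+g(k_i)\ge\rho\,k_i$ with $\rho:=1+\frac{d}{C'W^2(d+W)}$. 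Iterating from $i=0$ through $i=\tau-1$ yields $k\rho^{\tau-1}\le k_{\tau-1}\le N$, hence $\tau-1\le \log(N/k)/\log\rho\le W/\log\rho$.

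To finish, I would lower bound $\log\rho$. Since $C'\ge 1$, $W\ge 1$ and $d+W\ge d$, the quantity $x:=\frac{d}{C'W^2(d+W)}$ lies in $(0,1]$, so $\log\rho=\log(1+x)\ge x/2$; and because $\frac{d}{d+W}\ge\frac{1}{1+W}\ge\frac{1}{2W}$ we get $\log\rho\ge \frac{x}{2}\ge\frac{1}{4C'W^3}$, whence $\tau\le 1+4C'W^4=O((\log(n/k))^4)$. I do not expect any real obstacle: the additive increments $g(k_i)$ are far too small to reach $N$ in $\mathrm{poly}(W)$ additive steps, but amount to a \emph{multiplicative} step of size $1+\Theta(W^{-3})$, so the bound becomes transparent once one argues multiplicatively rather than additively; the only point requiring minor care is that the estimates $\alpha(k_i)\ge W^{-2}$ and $\log(n/k_i)\le W$ hold for \emph{every} $i<\tau$, which is exactly where $k\le k_i\le N\le n$ together with the monotonicity of $\alpha$ and of $\log(n/\cdot)$ is used.
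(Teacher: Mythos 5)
Your proof is correct, and it takes a cleaner route than the paper's.  The paper splits into a case analysis according to whether $d\le \log(n/\ell)$ or $d\ge \log(n/\ell)$ (so that the factor $d+\log(n/\ell)$ in the denominator of $g$ is comparable to one of the two summands), shows that a doubling of $\ell$ costs at most $O((\log(n/\ell))^3)$ steps, and then sums this over the $O(W)$ dyadic scales between $k$ and $N$ to reach $O(W^4)$.  You avoid both the case split and the dyadic decomposition by just observing the crude uniform bound $\frac{d}{d+\log(n/k_i)}\ge\frac{1}{2W}$ (valid since $d\ge 1$, $\log(n/k_i)\le W$, $W\ge1$), which when combined with $\alpha(k_i)\ge W^{-2}$ gives the uniform multiplicative increment $k_{i+1}\ge(1+\Theta(W^{-3}))k_i$; a single geometric-growth estimate then delivers the $O(W^4)$ bound.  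The trade-off is that the paper's dyadic argument exploits the fact that the doubling time improves as $k_i$ grows (since $\log(n/k_i)$ shrinks), whereas your uniform bound ignores this improvement; this loss is harmless here because both approaches land on the same exponent.  Your preliminary remarks — checking $W\ge1$ in the relevant regime (either directly from $r\ge n(1-d^{-1/4})$ or just from the assumed $k<N$ with $d$ large) and disposing of the $k\ge N$ triviality, and checking $x\in(0,1]$ so that $\log(1+x)\ge x/2$ applies — are correct and fill small gaps the paper leaves implicit.
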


We prove this fact in Appendix~\ref{sec:facts} and now jump to the proof of Lemma~\ref{cor:spread-vector}.

\begin{proof}[Proof of Lemma~\ref{cor:spread-vector}]
We let $k_t$ and $\tau$ be as above. We claim that for all $i\in[\tau]$ we have
\begin{equation}\label{eq:vector-decay}
v_{k_i}^\ast\ge v_{k_{i-1}}^\ast \cdot \delta,
\end{equation}
where $\delta = (\beta k/(dn))^7$. For a contradiction assume $i\in[\tau]$ is the smallest failure of this inequality. We will then apply Lemma~\ref{lem:decreasing-coordinate} to show that this contradicts the assumption that $v$ is close to the small singular vectors. 

Indeed, the failure of \eqref{eq:vector-decay} allows us to apply Lemma~\ref{lem:decreasing-coordinate} to $v$ to learn
\begin{equation}\label{eq:lemma-app} 
\l\big( A_{t,z} v \,; \, \beta v^{\ast}_{k_{i-1}}/2 \big)\geq \alpha(k_{i-1})dk_{i-1}/2 \ge k (\log(n/k))^{-2}.
\end{equation}
Since $i$ is the minimum such value for which \eqref{eq:vector-decay} fails, we have 
\begin{equation}\label{eq:v_k-lb}
v_{k_{i-1}}^\ast\ge \delta^{(i-1)}v_{k_0}^\ast\ge \delta^{(i-1)}k^2 n^{-5/2},
\end{equation}
where the last inequality holds
by the given lower bound on $v_{k_0}^\ast=v_k^\ast$. Now \eqref{eq:lemma-app} and \eqref{eq:v_k-lb} taken together imply
\[\snorm{A_{t,z} v}_2^2 \ge k (\log(n/k))^{-2} \cdot \beta^2\delta^{2(i-1)}k^4 n^{-5}/4 \ge \delta^{2i}.\]
Since $i\le\tau\le O((\log(n/k))^4)$, we see that this contradicts the assumption 
\[ \|A_{t,z}v \|_2^2 \leq \eta_{r}^2d \leq \exp(-C(\log (n/k))^8(\log d)^8 ), \]
where we have used that $k = c^\ast(\lceil t\rceil-r+1)$ and $k \leq n/d^{1/4}$. So in fact \eqref{eq:vector-decay} holds for all $i\in[\tau]$, as claimed.

Now iterating \eqref{eq:vector-decay} we obtain the desired result, using that we have $\tau=O((\log(n/k))^4)$ and $k_\tau\ge (n/(2d))\log_{(2)} d$.
\end{proof}

\section{Projection anti-concentration}\label{sec:proj-anti-concentration}

As discussed in Section~\ref{sec:sketch}, our ability to ``take on'' the singular value $\s_{r}$ depends on the magnitude of the projection of our new row or column onto the space spanned by the vectors $u_n,\ldots,u_{r}$, corresponding to the smallest singular directions $\s_n,\ldots, \s_{r}$ (on the appropriate side). Here we prove that it is unlikely for this projection to be small assuming $A_t \in \cE_r$ (or $A_t^\dagger\in\cE_r$). Recall that for a $t \times t$ or $(t-1/2)\times(t+1/2)$ matrix $M$, we let $P_{r,M}$ be the orthogonal projection onto the $\lceil t\rceil-r+1$ smallest right-singular directions of $M$.

\begin{lemma}\label{lem:proj-anti-concentration} 
Let $t-2\eps n\leq r \leq\lceil t\rceil$, let $A_t$ be a $t \times t$ matrix, let $1 \leq |z| \leq d$, and put $M = A_{t,z}$. Moreover, if $\lceil t\rceil -r +1\le nd^{-1/4}$ additionally assume $A_t \in \cE_r$. If $X \sim \row_{\lceil t \rceil}(\xi,p)$ and $\sigma_{r}(M)\le d^{1/2}\eta_r$ then for all $w\in \mb{C}^{\lceil t \rceil}$,
\[ \PP_X\big( \|P_{r,M}(X+w)^\dagger\|_2 < d^{1/2}\eta_r \big) = O(\eps+(\log_{(2)} d )^{-1/2}). \]
\end{lemma}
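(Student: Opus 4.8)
The plan is to study the subspace $V\subseteq\mathbb{C}^{\lceil t\rceil}$ spanned by the right-singular vectors of $M=A_{t,z}$ belonging to its $h:=\lceil t\rceil-r+1$ smallest singular values. Since $\sigma_r(M)\le d^{1/2}\eta_r$ by hypothesis, every $v\in V$ is a near-kernel vector, $\snorm{Mv}_2\le d^{1/2}\eta_r\snorm{v}_2$, so that Lemma~\ref{cor:spread-vector} can be brought to bear on (suitable) vectors of $V$. Writing $P=P_{r,M}$ for the orthogonal projection onto $V$ — a subspace that is fixed before $X$ is revealed — the task is the small-ball bound $\PP_X(\snorm{P(X+w)^\dagger}_2<d^{1/2}\eta_r)=O(\eps+(\log_{(2)}d)^{-1/2})$, and the identity I will exploit is $\snorm{P(X+w)^\dagger}_2^2=\sum_{\ell}|\langle v^{(\ell)},(X+w)^\dagger\rangle|^2$ for an orthonormal basis $v^{(1)},\dots,v^{(h)}$ of $V$.

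First I would pick a convenient basis of $V$ using Lemma~\ref{lem:basis}: this produces pivot coordinates $i_1<\dots<i_h$ and a basis $v^{(1)},\dots,v^{(h)}$ in staircase form, with $v^{(\ell)}$ supported on $\{1,\dots,i_\ell\}$, $\snorm{v^{(\ell)}}_2\le1$, and $\lambda(v^{(\ell)};k^2n^{-5/2})\ge k$ for $k:=c^\ast h$ — i.e.\ each basis vector already has at least $k$ coordinates of magnitude $\ge k^2n^{-5/2}$. Feeding each $v^{(\ell)}$ into Lemma~\ref{cor:spread-vector} (legitimate since $\snorm{Mv^{(\ell)}}_2\le d^{1/2}\eta_r$) upgrades this to genuine spreadness: there is a set $T_\ell\subseteq\{1,\dots,i_\ell\}$ with $|T_\ell|\ge N:=(n/(2d))\log_{(2)}d$ and $|v^{(\ell)}_j|\ge\eta_r d^{1/2}k^{-1/2}$ for all $j\in T_\ell$. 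The two relevant pieces of numerology are $pN=\tfrac12\log_{(2)}d$ and $k\asymp h$, so that $h$ orthogonal contributions each of size $\asymp\eta_r d^{1/2}k^{-1/2}$ combine to $\asymp d^{1/2}\eta_r$.

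For the anti-concentration I would first reveal the support $R:=\{j:\delta_j=1\}$ of $X$. For each $\ell$, $|T_\ell\cap R|\sim\mathrm{Bin}(|T_\ell|,p)$ has mean $\ge\tfrac12\log_{(2)}d$, so a Chernoff/union bound shows that, outside an event of probability $O(\eps+(\log_{(2)}d)^{-1/2})$, all but an $O(\eps)$-fraction of the $\ell$ have $|T_\ell\cap R|\ge\tfrac14\log_{(2)}d$; call these $\ell$ \emph{active}. For an active $\ell$ I reveal the remaining randomness $(\xi_j)_{j\in R}$ holding the coordinates of $T_\ell\cap R$ to the end; since $v^{(\ell)}$ is supported on $\{1,\dots,i_\ell\}\supseteq T_\ell$, once everything but $(\xi_j)_{j\in T_\ell\cap R}$ has been fixed we may write $\langle v^{(\ell)},(X+w)^\dagger\rangle=\sum_{j\in T_\ell\cap R}\overline{v^{(\ell)}_j}\,\xi_j+c_\ell$ with $c_\ell$ deterministic — this is what harmlessly absorbs the arbitrary shift $w$. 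An Erd\H{o}s--Littlewood--Offord-type estimate for a sum of $\ge\tfrac14\log_{(2)}d$ independent copies of $\xi$ with coefficients of modulus $\ge\eta_r d^{1/2}k^{-1/2}$, using $\beta(\xi)>0$, then bounds the conditional probability that $|\langle v^{(\ell)},(X+w)^\dagger\rangle|$ is below any scale comparable to $\eta_r d^{1/2}k^{-1/2}$ by $O((\log_{(2)}d)^{-1/2})$. Finally, because $k\asymp h$, the event $\snorm{P(X+w)^\dagger}_2<d^{1/2}\eta_r$ forces a constant fraction of the active pivots to have an ``abnormally small'' inner product (an averaging argument over the $h$ squared terms); processing the active pivots sequentially in the staircase order makes these events conditionally independent, and a binomial tail bound then turns the per-pivot estimate $O((\log_{(2)}d)^{-1/2})$ into the required $O(\eps+(\log_{(2)}d)^{-1/2})$, with the few small cases $h=O(1)$ handled instead by a direct union bound over the pivots.

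The step I expect to be the main obstacle is the decoupling in the last paragraph. Lemma~\ref{cor:spread-vector} only guarantees that the large coordinates $T_\ell$ of $v^{(\ell)}$ lie somewhere in $\{1,\dots,i_\ell\}$, not near the pivot $i_\ell$, so before running the sequential conditioning one must pass to a sub-collection of active pivots $\ell_1<\dots<\ell_s$ with $s=\Omega(h)$ together with held-back coordinates $j_a\in T_{\ell_a}\cap R$ that are mutually ``revealable last'' — concretely one needs $j_{a+1}>i_{\ell_a}$, so that the conditioning used for pivot $\ell_a$ does not touch the randomness reserved for later pivots. It is precisely here that the staircase form from Lemma~\ref{lem:basis} is essential, and where I expect the bookkeeping to be the most delicate; everything else — the reduction to a spread basis, the Littlewood--Offord input, and the averaging/binomial estimates — should be fairly routine given the lemmas already in hand.
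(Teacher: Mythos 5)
Your plan has the right skeleton (use Lemma~\ref{lem:basis} for a spread orthonormal basis of $V$, feed each basis vector into Lemma~\ref{cor:spread-vector} to upgrade to the $\eta_r d^{1/2}k^{-1/2}$-level spreadness, then anti-concentrate), but the step you flagged as ``delicate'' is in fact a genuine dead end, and, more importantly, an unnecessary one. Lemma~\ref{lem:basis} --- the Litvak--Lytova--Tikhomirov--Tomczak-Jaegermann--Youssef lemma --- asserts only that $V$ has \emph{some} orthonormal basis whose every vector has $c^\ast k$ coordinates of magnitude $\geq c^\ast k^{1/2}n^{-1}$; it says nothing about supports $\{1,\dots,i_\ell\}$, pivots, or echelon/staircase structure, and no such structure is available for a generic orthonormal basis. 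The entire final paragraph of your proposal --- revealing the support, choosing ``held-back'' coordinates $j_a$ with $j_{a+1}>i_{\ell_a}$, sequential conditioning, and the resulting conditional independence --- therefore rests on a property that Lemma~\ref{lem:basis} does not grant.

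The good news is that you do not need any decoupling: the per-vector bound $\PP(|\langle v,X\rangle|\le 2\eta_r(d/h)^{1/2}) = O((\log_{(2)}d)^{-1/2})$, obtained directly from Lemma~\ref{lem:anti-concentration-bernoulli} with $\rho = \eta_r d^{1/2}h^{-1/2}$ and $k = (n/(2d))\log_{(2)}d$, already matches the target. One then notes the deterministic inequality
\[ \1\big(\|P(X+w)^\dagger\|_2 \le \eta_r d^{1/2}\big) \le \frac{2}{h}\sum_{v\in B}\1\big(|\langle v,X+w\rangle| \le 2\eta_r(d/h)^{1/2}\big), \]
since otherwise more than $h/2$ of the orthogonal summands would exceed $2\eta_r^2 d/h$, forcing $\|P(X+w)^\dagger\|_2^2 > \eta_r^2 d$. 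Taking expectations kills all dependence issues and gives the $O((\log_{(2)}d)^{-1/2})$ bound in one line, with no need for active/inactive bookkeeping, binomial tails, or any notion of pivots; the shift $w$ is handled because Lemma~\ref{lem:anti-concentration-bernoulli} is a L\'evy-concentration bound uniform over translates. You should also note the case split: Lemma~\ref{cor:spread-vector} requires $r\geq n(1-d^{-1/4})$, i.e. $h\lesssim nd^{-1/4}$, so for $h\geq nd^{-1/4}$ one cannot invoke it and must instead rely on the cruder spread $v^\ast_{c^\ast h}\geq c^\ast h^{1/2}n^{-1}$ directly (Lemma~\ref{lem:hlarge-proj-anti-concentration}), absorbing this regime into the $O(\eps)$ term via $h\leq 2\eps n$.
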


The remainder of this section is devoted to a proof of this lemma. The proof is broken into different regimes, when the co-dimension is large, meaning $h = \lceil t\rceil-r+1 > nd^{-1/4}$, and when it is small, meaning $h \leq nd^{-1/4}$. We warm up by proving the large $h$ case, since this is significantly easier and does not need $A_t \in \cE_r$. When $h$ is small we will appeal to the results proved in the previous section about matrices $A_t \in \cE_r$.

\subsection{Proof of the large \texorpdfstring{$h$}{h} case}
We take care of the case of large $h$ with the following.

\begin{lemma}\label{lem:hlarge-proj-anti-concentration}
For $h \geq nd^{-1/4} $, let $X \sim \row_{\lceil t \rceil}(\xi,p)$ and let $P$ be an orthogonal projection onto an $h$-dimensional subspace. Then for all $\kappa>0$ and $w\in \mb{C}^n$ we have
\[ \PP_X\big( \|P (X+w)^{\dagger}\|_2 < \kappa \cdot d^{1/2}h^{3/2}n^{-3/2} \big) = O(\kappa + d^{-1/4}). \]
\end{lemma}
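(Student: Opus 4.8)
The plan is to exploit the fact that when the codimension $h$ is large, the projection $P$ onto an $h$-dimensional subspace has substantial trace, so for a typical sparse random vector $X$ the quantity $\|P(X+w)^\dagger\|_2^2$ is — in expectation over a suitable conditioning — of order at least $dh/n$, i.e. comparable to $(d h^{3/2} n^{-3/2})^2 \cdot (h/n)^{-1}$ up to the normalization in the statement. The key point is that $X$ has independent coordinates, each a copy of $\Ber(p)\cdot\xi$, so after revealing the support pattern we can run a conditional anti-concentration argument on the nonzero entries. Concretely, I would first condition on the set $T = \{i : \delta_i = 1\}$ of nonzero coordinates; whp $|T| = \Theta(d)$ by a Chernoff bound (this is where the $d^{-1/4}$ error term partly enters — we can afford to discard the low-probability bad event that $|T|$ is far from its mean, and also the event that no coordinate of $\xi$ lands in the ``spread'' regime guaranteed by $\beta(\xi) > 0$).

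Next, with $T$ fixed, write $P(X+w)^\dagger = P\big(\sum_{i \in T}\xi_i e_i + w\big)^\dagger$ and treat the $\xi_i$ (for $i \in T$) as the only remaining randomness. The heart of the matter is a lower bound on $\|Pv\|_2$ where $v = \sum_{i\in T}\xi_i e_i + w$ ranges over an affine family as the $\xi_i$ vary. I would use the standard fact that for an orthogonal projection $P$ of rank $h$ and independent mean-zero, variance-$1$ coordinates supported on a set $T$ (plus a deterministic shift), one has the Paley–Zygmund / second-moment type estimate: $\mathbb{E}_\xi \|Pv\|_2^2 = \|Pw'\|_2^2 + \sum_{i\in T}\|Pe_i\|_2^2$ for the appropriate recentering $w'$, and $\sum_{i \in T}\|Pe_i\|_2^2$ is on average (over which coordinates are in $T$) about $|T| \cdot h/n = \Theta(dh/n)$. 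To convert the second-moment bound into an anti-concentration statement at scale $\kappa \cdot d^{1/2}h^{3/2}n^{-3/2}$, note $(d^{1/2}h^{3/2}n^{-3/2})^2 = dh^3/n^3 = (dh/n)\cdot(h/n)^2 \le dh/n$ since $h \le n$, so the target scale is genuinely below the typical size, and one applies a Markov-type bound on the small-ball probability: either use a Berry–Esseen / Littlewood–Offord estimate coordinate by coordinate (the nonzero entries $\xi_i$ are spread by $\beta$, so each contributes an $O(\kappa + \text{small})$ factor of anti-concentration in the direction $Pe_i$), or pass through a tensorization lemma for the product measure. The cleanest route is probably: find a subset $T' \subseteq T$ of coordinates $i$ for which $\|Pe_i\|_2$ is not too small — at least $\gtrsim h^{1/2}n^{-1/2}$ say — which exists for a positive fraction of $i \in [n]$ because $\sum_{i=1}^n \|Pe_i\|_2^2 = h$, hence whp $|T'| = \Theta(d)$; then condition on all $\xi_i$ for $i \notin T'$ and apply a one-dimensional anti-concentration bound summed (via independence/tensorization) over $i \in T'$ in a near-orthogonal sub-collection.

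The main obstacle I expect is the last step: converting a lower bound on $\mathbb{E}\|Pv\|_2^2$ together with control on the $\|Pe_i\|_2$ into a genuine small-ball bound of the form $\PP(\|Pv\|_2 < \kappa \cdot \text{(typical)}) = O(\kappa + d^{-1/4})$, since the $\|Pe_i\|_2$ can vary and the vectors $Pe_i$ are not orthogonal. I would handle this by a Gram–Schmidt / greedy extraction of an almost-orthonormal sub-family of $\{Pe_i : i \in T\}$ of size $\Theta(d)$ with each $\|Pe_i\|_2 \gtrsim h^{1/2}n^{-1/2}$ (feasible since $\sum_i \|Pe_i\|^2_2 = h$ forces many coordinates to have $\|Pe_i\|_2^2 \gtrsim h/n$, as otherwise the sum is too small — quantitatively, at most $n/2$ indices can have $\|Pe_i\|_2^2 < h/(2n)$), then project $v$ onto the span of this sub-family and reduce to anti-concentration of a sum of independent spread random variables, where a standard Erdős–Littlewood–Offord / Esseen inequality gives the $O(\kappa)$ small-ball probability at the stated scale, with the $O(d^{-1/4})$ absorbing the conditioning failures and the discrepancy between $|T|$ and $\mathbb{E}|T| = (\lceil t\rceil) p \asymp d$. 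All of this is elementary and, crucially, uses only independence of the entries of $X$ and the spreadness constant $\beta$ — no quasi-randomness of $A_t$ — consistent with the remark preceding the lemma.
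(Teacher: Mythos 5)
Your overall instinct is right — no quasi-randomness of $A_t$ is needed, only independence of the entries of $X$ and the spreadness constant $\beta$ — but the route you take diverges from the paper's and contains a concrete error plus an unfilled gap at exactly the point you flag as the main obstacle.

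The counting error: from $\sum_{i}\|Pe_i\|_2^2 = h$ with each $\|Pe_i\|_2^2 \le 1$, what follows is that \emph{at least roughly $h/2$} indices satisfy $\|Pe_i\|_2^2 \ge h/(2n)$; your claim that ``at most $n/2$ indices can have $\|Pe_i\|_2^2 < h/(2n)$'' is false (take $V = \operatorname{span}\{e_1,\ldots,e_h\}$, where only $h$ indices even have $Pe_i \neq 0$). Consequently $\mathbb{E}|T'| = \Theta(dh/n)$, not $\Theta(d)$: for $h = nd^{-1/4}$ this is only $\Theta(d^{3/4})$, and this off-by-$(n/h)$ factor propagates into all subsequent estimates.

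The gap: you propose a ``Gram--Schmidt / greedy extraction of an almost-orthonormal sub-family of $\{Pe_i : i\in T\}$,'' but nothing guarantees such a family exists at the needed size; it is trivially at most $h$, and for highly correlated $\{Pe_i\}$ it can be far smaller than $|T'|$, at which point a genuine multi-dimensional Littlewood--Offord argument would be required. The paper circumvents this entirely with Lemma~\ref{lem:basis} (Litvak--Lytova--Tikhomirov--Tomczak-Jaegermann--Youssef), which yields an orthonormal basis $B$ of $V = \operatorname{Im}(P)$ in which \emph{every} $v\in B$ has at least $c^\ast h$ coordinates of magnitude $\ge c^\ast h^{1/2}n^{-1}$. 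This is strictly stronger information than the trace identity $\sum_i\|Pe_i\|_2^2=h$: it hands you a spread orthonormal basis, so that $\|P(X+w)^\dagger\|_2^2 = \sum_{v\in B}|\langle v,X+w\rangle|^2$ reduces to scalar anti-concentration of each $\langle v,X+w\rangle$ via Lemma~\ref{lem:anti-concentration-bernoulli}, and then a simple averaging argument (at most $h/2$ of the $h$ summands can exceed the threshold when the norm is small) completes the proof. In short, the two approaches part ways precisely at the step you identify as the obstacle, and closing your version would require either importing the LLTTY lemma anyway or supplying a rigorous version of the near-orthogonal extraction that currently is not justified.
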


The following lemma of Litvak, Lytova, Tikhomirov, Tomczak-Jaegermann, and Youssef \cite[Lemma~4.3]{LLTTY21} gives a decent basis for this space. 

\begin{lemma}\label{lem:basis}
Let $V \subseteq \C^n$ be a $k$-dimensional $\C$-vector space. There exists an orthonormal basis $B$ of $V$ so that for all $v \in B$, we have $v^{\ast}_{c^\ast k} \geq  c^\ast k^{1/2}n^{-1}$, where $c^\ast>0$ is an absolute constant. 
\end{lemma}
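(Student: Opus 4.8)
The plan is to take $B$ to be a \emph{Haar-random} orthonormal basis of $V$: fix any orthonormal basis of $V$ and apply a Haar-distributed $k\times k$ unitary to it. Each vector of $B$ is then, marginally, a uniformly random unit vector of $V$, so by a union bound it suffices to prove that a \emph{single} uniformly random unit vector $v\in V$ satisfies $v^\ast_{c^\ast k}\ge c^\ast k^{1/2}/n$ with probability strictly greater than $1-1/k$, where $c^\ast$ is the small absolute constant we are free to choose. Writing $\sigma = c^\ast k^{1/2}/n$, the event $v^\ast_{c^\ast k}\ge\sigma$ is precisely $\lambda(v;\sigma)\ge c^\ast k$, i.e.\ that $v$ has at least $c^\ast k$ coordinates of modulus $\ge\sigma$. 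For $k$ bounded by an absolute constant this is trivial (indeed $v^\ast_1=\snorm{v}_\infty\ge k^{-1/2}\ge\sigma$ always, since $n\ge k$), so from now on $k$ is large.

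I would realize $v$ as $v=g/\snorm{g}_2$, where $g=WZ$, $W$ is the $n\times k$ matrix with orthonormal columns spanning $V$, and $Z\in\C^k$ has i.i.d.\ standard complex Gaussian entries; thus $g_\ell=W_{\ell,\cdot}Z$ is a complex Gaussian with $\EE|g_\ell|^2=\snorm{W_{\ell,\cdot}}_2^2=:p_\ell=(P_V)_{\ell\ell}$, and $\sum_\ell p_\ell=\tr P_V=k$ with $p_\ell\le 1$. The structural input is extremely soft: the set $L=\{\ell: p_\ell\ge c^\ast k/n\}$ of ``heavy'' coordinates has $\sum_{\ell\notin L}p_\ell\le n\cdot c^\ast k/n=c^\ast k$, hence $\sum_{\ell\in L}p_\ell\ge(1-c^\ast)k$ and so $|L|\ge(1-c^\ast)k$. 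On the high-probability event $\snorm{g}_2^2\le 2k$ we have $|v_\ell|\ge\sigma$ as soon as $|g_\ell|^2\ge 2k\sigma^2=2(c^\ast)^2k^2n^{-2}$; and for $\ell\in L$ the normalised threshold is $2k\sigma^2/p_\ell\le 2(c^\ast)^2 k^2 n^{-2}\cdot n/(c^\ast k)=2c^\ast k/n\le 2c^\ast$, using $k\le n$ --- this factor $k/n\le1$ is exactly the slack hidden in the target scale $c^\ast k^{1/2}/n$ (as opposed to $c^\ast k^{-1/2}$), and it is what makes a purely first-moment count suffice rather than needing an $\snorm{\cdot}_\infty$ bound. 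Since $|g_\ell|^2/p_\ell$ is $\mathrm{Exp}(1)$, for $\ell\in L$ we get $\PP(|v_\ell|\ge\sigma)\ge e^{-2c^\ast}-\PP(\snorm{g}_2^2>2k)\ge 1-3c^\ast$, so $\EE\,|\{\ell\in L:|v_\ell|\ge\sigma\}|\ge(1-3c^\ast)|L|\ge(1-5c^\ast)k$, which is comfortably above $c^\ast k$.

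It remains to promote this first-moment estimate to the tail bound $\PP(\lambda(v;\sigma)<c^\ast k)<1/k$, and this is the step I expect to be the main obstacle. The indicators $\1[\,|v_\ell|\ge\sigma\,]$ for $\ell\in L$ are genuinely correlated --- if two rows of $W$ coincide the corresponding indicators are equal --- so Chernoff is unavailable directly. The route I would take is an Efron--Stein / bounded-differences estimate for the function $N(Z)=|\{\ell\in L:|W_{\ell,\cdot}Z|^2\ge 2k\sigma^2\}|$ of the Gaussian vector $Z$ (note that on $\{\snorm g_2^2\le 2k\}$ one has $\lambda(v;\sigma)\ge N(Z)$ and $\EE N\ge(1-3c^\ast)k$): resampling one coordinate $Z_i$ changes $W_{\ell,\cdot}Z$ by $W_{\ell,i}(Z_i'-Z_i)$, and one shows --- via anti-concentration of $|W_{\ell,\cdot}Z|$ near $(2k\sigma^2)^{1/2}$ together with the identity $\sum_{\ell'}|(P_V)_{\ell\ell'}|^2=p_\ell$ coming from $P_V^2=P_V$ (which bounds $\sum_{\ell'\in L}|(P_V)_{\ell\ell'}|^2/(p_\ell p_{\ell'})\le n/(c^\ast k)$, i.e.\ controls the total squared correlation of each heavy coordinate with the others) --- that on average only $O(1)$ of the indicators flip. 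This gives $\Var N=O(k)$, whence Chebyshev yields $\PP(N<c^\ast k)\le\PP(|N-\EE N|\ge\tfrac12\EE N)=O(1/k)$; together with $\PP(\snorm{g}_2^2>2k)=e^{-\Omega(k)}$, a routine improvement of the constant (or a standard sub-exponential refinement using that the near-threshold coordinates are themselves few) makes this $<1/k$. Feeding it into the union bound over the $k$ vectors of $B$ then completes the proof. The only genuinely delicate point is the sensitivity/anti-concentration estimate behind the variance bound; everything else is soft.
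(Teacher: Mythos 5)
The paper does not prove this lemma: it is imported verbatim from Litvak--Lytova--Tikhomirov--Tomczak-Jaegermann--Youssef \cite[Lemma~4.3]{LLTTY21}, so there is no internal proof to compare against. I will therefore assess your argument on its own terms.

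The skeleton of your approach --- take a Haar-random orthonormal basis, reduce by a union bound to a per-vector statement for a uniformly random unit vector in $V$, Gaussianize $v=g/\snorm{g}_2$ with $g_\ell$ complex Gaussian of variance $p_\ell=(P_V)_{\ell\ell}$, and use $\sum_\ell p_\ell=k$, $p_\ell\le1$ to produce at least $(1-c^\ast)k$ heavy coordinates on which the threshold $\sqrt{2k}\sigma$ is a small fraction of $\sqrt{p_\ell}$ --- is clean, and the observation that the factor $k/n\le 1$ built into the scale $c^\ast k^{1/2}/n$ is what makes the first-moment count work is exactly the right thing to notice. That part I believe.

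The gap is the step you yourself flag as ``the main obstacle,'' and I do not think it is as close to closed as the writing suggests. Two separate problems. First, the asserted variance bound $\Var N=O(k)$ is not established by the sketch. Efron--Stein needs $\sum_i\EE[(N(Z)-N(Z^{(i)}))^2]$, i.e.\ a \emph{second moment} of the number of indicators flipped when $Z_i$ is resampled; you argue only that the \emph{average} number of flips is $O(1)$. The flip count is not boundedly supported: when $|Z_i-Z_i'|$ is large (a Gaussian tail event), every $\ell$ with appreciable $|W_{\ell,i}|$ and $|g_\ell|$ within $|W_{\ell,i}||Z_i-Z_i'|$ of the threshold can flip at once, and the one-step anti-concentration you invoke controls the mean of the flip count, not its square; the identity $\sum_{\ell'}|(P_V)_{\ell\ell'}|^2=p_\ell$ controls the total squared correlation of $g_\ell$ with the rest, but turning that into a second-moment bound on the flip count is precisely the ``delicate'' point, and no mechanism is given. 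Second, and independently, even \emph{granting} $\Var N\le Ck$, Chebyshev yields only $\PP(N<c^\ast k)\le Ck/(\EE N-c^\ast k)^2\approx C/k$ with the (unknown, potentially large) Efron--Stein constant $C$, while the union bound over the $k$ basis vectors requires the per-vector failure probability to be strictly below $1/k$. For a random variable of mean $\Theta(k)$ and variance $\Theta(k)$ this is not a ``routine improvement of the constant'': you need a qualitatively better tail than Chebyshev (e.g.\ a useful exponential moment, or a fourth moment of size $o(k^2)$), and establishing that is essentially the same unresolved concentration difficulty. One also cannot simply shrink $c^\ast$ out of this: doing so shrinks the threshold and the target index in tandem but does not shrink $C$. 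So as written the proposal produces an $O(1/k)$ bound where a $<1/k$ bound is needed, and the variance input feeding even that is not verified.

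For what it is worth, a naive attempt to replace Efron--Stein by Gaussian Lipschitz concentration on a surrogate such as $F(g)=\big(\sum_{\ell\in L}(2\cdot\mathrm{thresh}-|g_\ell|)_+^2\big)^{1/2}$ (which is $1$-Lipschitz in $Z$ because $Z\mapsto WZ$ is an isometry) also does not close the gap: on the event $N<c^\ast k$ one gets $F\gtrsim \sqrt{k}\cdot\mathrm{thresh}$, while $\EE F$ is a constant factor smaller, but the resulting sub-Gaussian tail $e^{-\Omega((c^\ast)^2 k^3/n^2)}$ is only below $1/k$ when $k\gtrsim n^{2/3}$, so the small-$k$ regime is not covered. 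This suggests the obstruction is real and not merely a matter of choosing a slicker concentration inequality; the cited proof in \cite{LLTTY21} is doing something that this purely second-moment argument does not capture. At a minimum, the proposal needs either a verified higher-moment or exponential-moment control on $N$, or a different construction of the basis that avoids the $1/k$ union-bound threshold altogether.
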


We also need the following anti-concentration inequality, which is
a straightforward consequence of the classical inequality due to 
L\'evy--Kolmogorov--Rogozin \cite{Kol58,Rog61}. 

\begin{lemma}\label{lem:anti-concentration-bernoulli}
Let $X \sim \col_n(\xi,p)$, and let $v \in \C^{n}$ satisfy $v_k^{\ast} \geq \rho$.  Then 
\[ \max_{y \in \C}\, \PP\big( | \la X, v \ra - y | \leq r\big)\le\frac{C_{\beta}}{(kp)^{1/2}} \cdot \frac{r}{\rho},\]
for all $r \geq \beta \rho/\sqrt{2}$. Here we can take $C_{\beta} = O(\beta^{-3/2})$. 
\end{lemma}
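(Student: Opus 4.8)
The plan is to reduce the problem, by conditioning, to a sum over only the $k$ coordinates of $v$ of largest modulus, to bound the (complex) Lévy concentration function of each individual term using the definition of $\beta$, and then to conclude with the multidimensional Kolmogorov--Rogozin inequality. For a $\C$-valued random variable $Z$ and $\lambda>0$, write $Q(Z;\lambda)=\sup_{y\in\C}\PP(|Z-y|\le\lambda)$ for its Lévy concentration function; recall that $Q(Z_1+Z_2;\lambda)\le Q(Z_1;\lambda)$ whenever $Z_1,Z_2$ are independent.

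First I would write $\langle X,v\rangle=\sum_{i=1}^{n}c_iX_i$ with $|c_i|=|v_i|$ and the $X_i\sim\Ber(p)\cdot\xi$ independent, fix a set $K\subseteq[n]$ of $k$ indices on which $|v_i|\ge v_k^\ast\ge\rho$, and split $\langle X,v\rangle=W_1+W_2$ with $W_1=\sum_{i\in K}c_iX_i$ and $W_2=\sum_{i\notin K}c_iX_i$. Conditioning on $W_2$ and using the convolution inequality above reduces everything to bounding $Q(W_1;r)$.

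Next I would estimate $Q(c_iX_i;\lambda_i)$ for $i\in K$ with the choice $\lambda_i=\beta\rho/\sqrt2$, which is at most $r$ by the hypothesis $r\ge\beta\rho/\sqrt2$. The point is that $c_iX_i=0$ unless the Bernoulli factor is $1$, so for a ball not containing $0$ the probability is at most $p\le 1-\beta p$ (using $p\le 1/2$ and $\beta\le 1$), while for a ball containing $0$ one gets $(1-p)+p\,\PP(|\xi-w|\le\lambda_i/|c_i|)$ for some $w\in\C$; since $\lambda_i/|c_i|\le\beta/\sqrt2<\beta$, this closed ball lies in an open ball of radius $\beta$, and the definition of $\beta$ bounds the probability by $(1-p)+p(1-\beta)=1-\beta p$. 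Hence $1-Q(c_iX_i;\lambda_i)\ge\beta p$ for every $i\in K$. Feeding these $k$ estimates into the Kolmogorov--Rogozin inequality (with individual scales $\lambda_i\le r$) gives
\[ Q(W_1;r)\;\le\;\frac{C r}{\big(\sum_{i\in K}\lambda_i^{2}(1-Q(c_iX_i;\lambda_i))\big)^{1/2}}\;\le\;\frac{C r}{\big(k(\beta\rho/\sqrt2)^{2}\beta p\big)^{1/2}}\;=\;\frac{C\sqrt2}{\beta^{3/2}}\cdot\frac{r}{\rho(kp)^{1/2}}, \]
which is the claimed bound with $C_\beta=O(\beta^{-3/2})$.

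I expect the only genuine obstacle to be the \emph{complex} (two-dimensional) nature of the Kolmogorov--Rogozin step: the classical inequality of \cite{Kol58,Rog61} is stated for real random variables, and one needs its $\R^2$ analogue with variable scales. This follows from the one-dimensional statement by the standard Esseen-type Fourier-analytic argument (or, at the cost of worse constants, by projecting onto a suitably chosen line); it is the only nontrivial ingredient, and here the definition of $\beta$ via \emph{disks} rather than intervals is exactly what makes the per-term estimate go through. Everything else is bookkeeping with the constraints $p\le 1/2$, $|c_i|\ge\rho$, and $r\ge\beta\rho/\sqrt2$.
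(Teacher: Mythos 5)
Your proof is correct in substance, but it packages the key ``complex-to-real'' reduction differently from the paper, and this difference is worth flagging. Both arguments begin identically: restrict to the $k$ largest coordinates by independence, and note that the scale $\beta\rho/\sqrt 2$ is chosen so that each term's concentration function at that scale is $\le 1-\beta p$ (using $p\le 1/2$ and the definition of $\beta$ via disks). Where you diverge is in the Kolmogorov--Rogozin step. You bound the \emph{complex} concentration function $Q(c_iX_i;\lambda_i)\le 1-\beta p$ directly for every $i\in K$ and then appeal to a two-dimensional ($\R^2$-valued) Rogozin inequality with variable scales. That is a genuine theorem, but it is not the statement recorded as Lemma~\ref{lem:LKR} in the paper, which is formulated for real scalars; so your route requires an additional ingredient (Esseen's multivariate concentration inequality or an equivalent), which you correctly identify as the only nontrivial point. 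The paper instead avoids ever needing a 2D inequality: it proves Lemma~\ref{lem:basic-complex}, showing that for each $j$ \emph{either} $\Re(v_j\xi)$ \emph{or} $\Im(v_j\xi)$ has nontrivial anti-concentration at scale $\beta\rho/\sqrt 2$, then pigeonholes to find at least $k/2$ indices agreeing on one of the two choices, multiplies by $\sqrt{-1}$ if necessary to make it the real part, and passes to the 1D Rogozin inequality via $\mc{L}(Z,r)\le\mc{L}(\Re Z,r)$. Your parenthetical fallback (``projecting onto a suitably chosen line'') is essentially this trick, but stated that casually it hides the point that no single line works for all terms simultaneously; the paper's pigeonhole over two specific lines is exactly what makes the projection argument rigorous. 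Net comparison: your argument is cleaner and more uniform if one is willing to import the 2D Rogozin inequality; the paper's is slightly more ad hoc but self-contained given only the classical scalar statement.
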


We include the simple deduction of this lemma in Appendix~\ref{sec:facts}. We are now prepared to prove Lemma~\ref{lem:hlarge-proj-anti-concentration}, which takes care of Lemma~\ref{lem:proj-anti-concentration} for large $h$.

\begin{proof}[Proof of Lemma~\ref{lem:hlarge-proj-anti-concentration}]
Let $V$ be the image of $P$. By Lemma~\ref{lem:basis} there exists an orthonormal basis $B$ of $V$ so that $v^{\ast}_{c^\ast h} \geq c^\ast h^{1/2}n^{-1}$ for each $v \in B$. Write $\|P(X+w)^{\dagger} \|^2_2 = \sum_{v \in B} |\la v,X + w \ra|^2 $
and note that  
\[ \1( \|P(X+w)^{\dagger}\|_2 \leq \kappa \cdot d^{1/2}h^{3/2}n^{-3/2}) \leq \frac{2}{h} \sum_{v\in B} \1( |\la v,X+w \ra | \leq 2\kappa \cdot d^{1/2}hn^{-3/2} ), \]
since if $\|P(X+w)^{\dagger} \|_2 < \kappa \cdot d^{1/2}h^{3/2}n^{-3/2}$ then at most $h/2$
inner products on the right-hand-side are $> 2\kappa\cdot d^{1/2}hn^{-3/2}$. Taking expectations gives
\begin{align*}
\PP( \|P(X+w)^{\dagger}\|_2 \leq \kappa \cdot d^{1/2}h^{3/2}n^{-3/2}) &\leq \frac{2}{h} \sum_{v\in B} \PP( |\la v,X+w \ra | \leq 2\kappa \cdot d^{1/2}hn^{-3/2} )\\
&\le 2 \max_{v\in B}\PP( |\la v,X+w \ra | \leq 2\kappa \cdot d^{1/2}hn^{-3/2} )
\end{align*}
where we have used that $|B| = h$. We now estimate this probability by applying Lemma~\ref{lem:anti-concentration-bernoulli} with
$k = c^\ast h$, $\rho = c^\ast h^{1/2}n^{-1}$ and $r = \max\{ 2\kappa\cdot d^{1/2}hn^{-3/2}, \beta \rho/\sqrt{2} \}$ and claim that 
\[\max_{v\in B}\PP( |\la v,X+w \ra | \leq 2\kappa \cdot d^{1/2}hn^{-3/2} )\le O(\kappa + d^{-1/4})\]
as desired. To see the final inequality, note that if $r = \beta \rho/\sqrt{2}$, then the upper bound simplifies as 
\[\frac{C_{\beta}}{(kp)^{1/2}} \cdot \frac{r}{\rho} = \frac{C_{\beta} (\beta \rho/\sqrt{2})}{(c^\ast hp)^{1/2} \cdot \rho}\le O\bigg(\frac{1}{(hp)^{1/2}}\bigg) = O(d^{-3/8})\]
where we recall that $hp\ge n\cdot d^{-1/4} \cdot d/n\ge d^{3/4}$. On the other hand, if 
$r = 2\kappa\cdot d^{1/2}hn^{-3/2} = 2\kappa \cdot hn^{-1} p^{1/2}$ then coming from Lemma~\ref{lem:anti-concentration-bernoulli} simplifies as 
\[\frac{C_{\beta}}{(kp)^{1/2}} \cdot \frac{r}{\rho} = \frac{C_{\beta}}{(c^\ast hp)^{1/2}}\cdot \frac{2\kappa \cdot hn^{-1} p^{1/2}}{c^{\ast}h^{1/2}n^{-1}} = O(\kappa).\]
This gives the desired result. \end{proof}

\vspace{2mm}

\subsection{Proof of the small \texorpdfstring{$h$}{h} case}

The proof of Lemma~\ref{lem:proj-anti-concentration} is similar to the proof of Lemma~\ref{lem:hlarge-proj-anti-concentration} but we will additionally need our results that tell us the small singular vectors are unstructured.

\begin{proof}[Proof of Lemma~\ref{lem:proj-anti-concentration}]
After applying Lemma~\ref{lem:hlarge-proj-anti-concentration} and using $h\le 2\eps n$, we may assume
$1\le h < nd^{-1/4}$ here. We let $V$ be the subspace spanned by the $h$ smallest right-singular directions of $M$. As before, we apply Lemma~\ref{lem:basis} to find an orthonormal basis $B$ of $V$ so that 
\begin{equation} \label{eq:lb-on-vkast}
v_{c^\ast h}^{\ast} \geq c^\ast h^{1/2}n^{-1}
\end{equation}
for each $v \in B$. Again write  $\|P_{M,r} (X+w)^{\dagger} \|^2_2 = \sum_{v \in B} |\la v,X+w \ra|^2 $ and again note we have
\begin{equation}\label{eq:Quad-sum}
\PP( \|P_{r,M}(X+w)^{\dagger} \|_2 \leq \eta_rd^{1/2} ) \leq \frac{2}{h} \sum_{v \in B} \PP( |\la v,X +w\ra | \leq 2\eta_r(d/h)^{1/2}). \end{equation}
Now fix $v \in B$ and express $v = \sum_{i=1}^h c_iu_i$ where $u_i$ are unit vectors associated with the least right-singular directions of $M$ and $\sum_{i=1}^h|c_i|^2=1 $. We use $\sigma_{\lceil t\rceil-h+1}(A_{t,z})=\sigma_r(A_{t,z}) \leq d^{1/2}\eta_r $ to see
\begin{equation}\label{eq:near-kernel-confirm}
\|A_{t,z}v\|_2^2 = \la v , A_{t,z}^\dagger A_{t,z}v \ra = \sum_{i=1}^h\sum_{j=1}^h c_ic_j \la u_i, A^\dagger_{t,z}A_{t,z} u_j \ra =\sum_{i=1}^h |c_i|^2\s_{\lceil t\rceil-i+1}^2 \leq d\eta_r^2,
\end{equation} 
where we have used that $u_i$ are orthogonal eigenvectors of $A_{t,z}^\dagger A_{t,z}$, by definition. 
Due to \eqref{eq:lb-on-vkast} and \eqref{eq:near-kernel-confirm} we may apply Lemma~\ref{cor:spread-vector} to see 
\[ v^{\ast}_{\ell} \ge \eta_rd^{1/2}h^{-1/2} \hspace{2mm} \text{ where } \hspace{2mm} \ell = (n/(2d))\log_{(2)} d. \] 
Thus the expected intersection of the support of $X$ with the coordinates of $v$ with $|v_i| \geq \eta_r d^{1/2}h^{-1/2}$ is $p\ell = (1/2)\log_{(2)} d \rightarrow \infty$. In particular, we can use Lemma~\ref{lem:anti-concentration-bernoulli} to see
\begin{equation}\label{eq:anti-concentration-step}
\PP( |\la v,X+w \ra | \leq \eta_r(d/h)^{1/2} ) = O( (\log_{(2)} d )^{-1/2}).
\end{equation} 
with the choice of $r=\rho=\eta_rd^{1/2}h^{-1/2}$. Applying \eqref{eq:anti-concentration-step} to each term in \eqref{eq:Quad-sum}  concludes the proof of Lemma~\ref{lem:proj-anti-concentration}.
\end{proof}

\section{A step in the process}\label{sec:a-step}

The following crucial lemma tells us that each row or column addition allows us to bring a new singular value under our control, with probability $1-o_{d\rightarrow\infty}(1)$. Recall that 
\[T_{r,t}(z) = -\frac{1}{n} \sum_{j = 1}^{r} \log\big(\sigma_j\big(A_{t,z}^\ast\big)\big).\]

\begin{lemma}\label{lem:rank-jump}
For $t-2\eps n\le r<\lceil t\rceil$, let $A_t$ be a $t \times t$ matrix which, if  $\lceil t\rceil - r +1\le nd^{-1/4}$,  additionally satisfies $ A_t^\dagger\in\cE_{r+1}$, if $t \in \Z$, or $A_t \in \cE_{r+1}$, if $t$ is half integral. Then
\[ \PP\big(\,T_{r+1,t+1/2}(z) \leq T_{r,t}(z)  + \delta_{r+1} \big) =  1-O(\eps + (\log_{(2)} d)^{-1/2}), \] where the probability is over the new row or column.
\end{lemma}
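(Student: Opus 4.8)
The plan is to show that after adding a new row/column, the matrix $A_{t+1/2,z}$ has its $(r+1)$-st singular value (counting from the top) not much smaller than $\sigma_r(A_{t,z})$, and that $\sigma_r(A_{t,z})$ is itself not too small, so that the extra term $-\frac1n\log\sigma_{r+1}(A_{t+1/2,z})$ we add to the sum is at most $\delta_{r+1}$ plus the change coming from the old singular values; the interlacing/monotonicity of singular values under row/column addition will handle the latter. Let me focus on the half-integral case, where we go from the $t\times t$ matrix $A_{t,z}$ to the $t\times(t+1)$ matrix $A_{t+1/2,z}$ by appending a new column; the integral case is symmetric, working with $A_t^\dagger$.

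First I would record the elementary fact that appending a column can only increase each singular value: for the $n\times m$ matrix $M$ and $M'$ obtained by appending a column, $\sigma_j(M') \ge \sigma_j(M)$ for all $j$, with an interlacing bound $\sigma_j(M') \le \sigma_{j-1}(M)$. Hence for $j \le r$ we have $\sigma_j(A_{t+1/2,z}) \ge \sigma_j(A_{t,z})$, so $-\log\sigma_j$ decreases, which means $T_{r,t+1/2}(z) \le T_{r,t}(z)$. Thus it suffices to control the single new term: we need
\[
-\frac1n\log\big(\sigma_{r+1}(A_{t+1/2,z})\big) \le \delta_{r+1} + \text{(slack from old terms)},
\]
and in fact it is enough to prove $\sigma_{r+1}(A_{t+1/2,z}) \ge \tfrac12 d^{1/2}\eta_{r+1}$ with the stated probability, since $\eta_{r+1} = \exp(-n\delta_{r+1})$ and $-\tfrac1n\log(d^{1/2}\eta_{r+1}/2) = \delta_{r+1} - \tfrac1n\log(d^{1/2}/2) \le \delta_{r+1} + o(1/n)$ — and the $o(1/n)$ is absorbed because we can run the whole argument with a slightly smaller constant, or fold it into the $\delta$'s. (If $\sigma_r(A_{t,z}) > d^{1/2}\eta_{r+1}$ already then even more trivially $\sigma_{r+1}(A_{t+1/2,z})\ge \sigma_r(A_{t,z})$ by interlacing wait — careful: interlacing gives $\sigma_{r+1}(A_{t+1/2,z}) \ge \sigma_{r+1}(A_{t,z})$, not $\sigma_r$; so the bound we need is genuinely $\sigma_{r+1}(A_{t+1/2,z})$ large, and this is where the new randomness must be used.)

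The heart of the matter: I want to lower bound $\sigma_{r+1}(A_{t+1/2,z})$, i.e. show that the restriction of $A_{t+1/2,z}$ to the orthocomplement of its top $r$ right-singular directions has operator norm at least $\tfrac12 d^{1/2}\eta_{r+1}$ — equivalently, by the min–max characterization, that there is no unit vector $v \perp (\text{top } r \text{ right-singular space})$ with $\|A_{t+1/2,z}v\|_2$ small. Decompose $v = (v', c)$ where $v'\in\C^{\lceil t\rceil}$ is the part on the old coordinates and $c\in\C$ is the new coordinate; then $A_{t+1/2,z}v = A_{t,z}v' + c\,(X+w)^\dagger$ where $X+w$ is the new column (here $w$ is the deterministic $-z e_{t+1}$ contribution, matching the shift). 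Split into two cases. If $v'$ has a large component outside the top $h := \lceil t\rceil - r$ singular directions of $A_{t,z}$, then even $\|A_{t,z}v'\|_2$ is bounded below by roughly $\sigma_r(A_{t,z})$, but we may assume $\sigma_r(A_{t,z})\le d^{1/2}\eta_r$ (else the lemma is trivial via interlacing as above) — hmm, this doesn't immediately close; the cleaner route is: project onto $V := \mathrm{span}$ of the $h$ smallest right-singular directions of $A_{t,z}$. If $\|P_{r+1,A_{t,z}}$-component of $v'$ is small, the vector $v'$ has large overlap with the middle singular directions and $\|A_{t,z}v'\|\gtrsim d^{1/2}\eta_r \gg d^{1/2}\eta_{r+1}$, so $v$ cannot be a near-null vector and we are done deterministically. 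Otherwise $v'$ is essentially in $V$, and then I use Lemma~\ref{lem:proj-anti-concentration}: conditioning on $A_t$ (which satisfies $A_t\in\cE_{r+1}$), with probability $1-O(\eps+(\log_{(2)}d)^{-1/2})$ over $X$ we have $\|P_{r+1,A_{t,z}}(X+w)^\dagger\|_2 \ge d^{1/2}\eta_{r+1}$. On that event, $\|A_{t+1/2,z}v\|_2 \ge |c|\cdot\|P_{r+1}(X+w)^\dagger\|_2 - \|(\mathrm{id}-P_{r+1})(A_{t,z}v' + c(X+w)^\dagger)\|$ — and since $P_{r+1}(A_{t,z}v')$ is tiny (as $v'$ nearly lies in $V$, $A_{t,z}v'$ is nearly in the span of the tiny singular values) one gets a lower bound $\gtrsim |c|\, d^{1/2}\eta_{r+1}$, while if $|c|$ is itself small then $v'$ is nearly a unit vector in $V$ and $\|A_{t,z}v'\|\le d^{1/2}\eta_r\cdot$small is NOT a lower bound — so I'd instead argue directly that a unit vector with both small $\|A_{t+1/2,z}v\|$ and small $|c|$ forces $\|A_{t,z}v'\|$ small with $v'$ nearly unit in $V$, which is fine, but then $v$ is nearly in $V\oplus 0$; to exclude this I note that $V\oplus 0$ is $h$-dimensional and $v$ was assumed orthogonal to the top $r$ right-singular directions of $A_{t+1/2,z}$, an $(r+1)$-dimensional space inside $\C^{t+1}$ — a dimension count on $\C^{t+1}$ ($r+1 + h = \lceil t\rceil+1 = t+1$) shows the new column's direction must enter, forcing $|c|$ bounded below, a contradiction.

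The step I expect to be the main obstacle is exactly this last bookkeeping — cleanly organizing the two-case split (large-$|c|$ versus small-$|c|$, and "$v'$ escapes $V$" versus "$v'\approx$ inside $V$") so that Lemma~\ref{lem:proj-anti-concentration} applies on the one case where it is needed, and the other cases are closed deterministically via singular-value interlacing and the assumption $\sigma_r(A_{t,z})\le d^{1/2}\eta_{r+1}$. The probability bound $O(\eps+(\log_{(2)}d)^{-1/2})$ is inherited verbatim from Lemma~\ref{lem:proj-anti-concentration}, and the summability \eqref{eq:delta-sum} of the $\delta_r$ is not needed here (it is used in Section~\ref{sec:analysis}), so once the deterministic geometry is set up the proof is short. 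One subtlety to flag: one must verify the hypothesis "$\sigma_{r+1}(M)\le d^{1/2}\eta_{r+1}$" of Lemma~\ref{lem:proj-anti-concentration} (with $r$ there equal to $r+1$ here), but this is precisely the only case in which we need probabilistic input — if it fails, interlacing gives $\sigma_{r+1}(A_{t+1/2,z})\ge\sigma_{r+1}(A_{t,z}) > d^{1/2}\eta_{r+1}$ and we are done.
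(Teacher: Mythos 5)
Your opening decomposition
\[
T_{r+1,t+1/2}(z)\;=\;T_{r,t+1/2}(z)\;-\;\tfrac1n\log\sigma_{r+1}\bigl(A_{t+1/2,z}^{\ast}\bigr)\;\le\;T_{r,t}(z)\;-\;\tfrac1n\log\sigma_{r+1}\bigl(A_{t+1/2,z}^{\ast}\bigr)
\]
is correct as an inequality, but it is a bad move: you throw away the nonnegative quantity $\sum_{i\le r}\bigl(\log\sigma_i(A_{t+1/2,z}^\ast)-\log\sigma_i(A_{t,z}^\ast)\bigr)$, and that is exactly the cushion you will need. Having done so, you are forced to prove $\sigma_{r+1}(A_{t+1/2,z}^\ast)\ge\eta_{r+1}$ directly, and this is false in general. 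The issue is that a large projection $\|P_{r+1,A_{t,z}^\ast}(X+w)^\dagger\|$ does \emph{not} imply $\sigma_{r+1}(M')$ is large when $\sigma_r(A_{t,z}^\ast)$ is itself small: the added row can "fill" a bottom singular direction and push the previously $r$th singular value down into slot $r+1$. Concretely, take $M=\bigl(\begin{smallmatrix}a&0&0\\0&b&0\end{smallmatrix}\bigr)$ with $a>b>0$ both tiny, $r=1$, $h=2$, $V=\operatorname{span}\{e_2,e_3\}$, and append $X=(0,1,1)$: then $\|PX\|=\sqrt2$ but $\sigma_2(M')\approx a$, which can be arbitrarily small. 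Since in the hard case $\sigma_{r+1}(A_{t,z}^\ast)<\eta_{r+1}$ nothing bounds $\sigma_r(A_{t,z}^\ast)$ from below, this failure mode is live, and no amount of the dimension-counting you gesture at (about $|c|$ being bounded below) rules it out. Indeed you half-noticed the difficulty ("hmm, this doesn't immediately close") but never resolved it.

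The paper avoids this entirely by never tracking $\sigma_{r+1}(M')$ alone: Lemma~\ref{prop:walk-row} gives the determinant-style inequality $\prod_{i=1}^{r+1}\sigma_i(M')\ge\|PX^\dagger\|_2\cdot\prod_{i=1}^r\sigma_i(M)$, and since $T_{r,t}$ is literally $-\tfrac1n\log\prod_{i\le r}\sigma_i$, this translates verbatim into $T_{r+1,t+1/2}(z)\le T_{r,t}(z)-\tfrac1n\log\|P_{r+1,A_{t,z}^\ast}(d^{-1/2}X+w)^\dagger\|_2$. In the counterexample scenario the top $r$ singular values of $M'$ \emph{grow}, and the product formula automatically credits this growth against the small $\sigma_{r+1}(M')$; your inequality discards that credit. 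Your case split ($\sigma_{r+1}(A_{t,z}^\ast)\ge\eta_{r+1}$ versus $<\eta_{r+1}$) and your identification of Lemma~\ref{lem:proj-anti-concentration} as the probabilistic input are exactly right and match the paper, but the linear-algebraic bridge between them must be Lemma~\ref{prop:walk-row} on products, not an individual-singular-value bound. (Also a minor bookkeeping slip: at half-integral $t$ the matrix $A_t$ is $(t-\tfrac12)\times(t+\tfrac12)$, not $t\times t$, and the step to $t+\tfrac12$ adds a \emph{row}; the column addition is the integral-$t$ step.)
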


To prove this we will have to track how the singular values evolve when we add a row or column to $A_t$. In particular we use the following basic linear-algebraic lemma along with Lemma~\ref{lem:proj-anti-concentration}, the main lemma from the previous section. 

\begin{lemma}\label{prop:walk-row}
Let $M$ be an $n\times m$ matrix and let $M'$ be an $(n+1)\times m$ obtained by adding the row $X$ to $M$. For $r < m$, we have 
\[ \prod_{i=1}^{r+1} \s_i(M') \geq \|P X^\dagger \|_2 \cdot \prod_{i=1}^{r} \s_i(M), \] where $P$ is the orthogonal projection onto the span of the $m-r$ smallest right-singular vector of $M$.
\end{lemma}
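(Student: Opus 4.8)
The plan is to relate the product of the top $r+1$ singular values of $M'$ to a product of top singular values of $M$ together with a one-dimensional factor capturing the new row. Recall that $\prod_{i=1}^{k}\s_i(N)$ equals the largest $k$-dimensional volume of the image under $N$ of a unit $k$-cube, or equivalently $\prod_{i=1}^k \s_i(N) = \max \{ \mathrm{vol}_k(N K) : K \subseteq \C^m \text{ a } k\text{-dim unit cube}\}$; more usefully for us, $\prod_{i=1}^{k}\s_i(N)$ is the operator norm of $\wedge^k N$ acting on $\wedge^k \C^m$, and it can be computed as a maximum of $\sqrt{\det(N^\dagger N|_W)}$ over $k$-dimensional coordinate-free subspaces $W \subseteq \C^m$ (the row/domain side). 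I would set this up cleanly via the Cauchy–Binet/compound-matrix description: $\prod_{i=1}^{k}\s_i(N) = \|\wedge^k N\|_{\mathrm{op}}$.

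First I would choose a good test subspace. Let $W_0 \subseteq \C^m$ be the span of the top $r$ right-singular vectors of $M$, so $\mathrm{vol}_r(M W_0') = \prod_{i=1}^r \s_i(M)$ where $W_0'$ is a unit $r$-cube spanning $W_0$ — i.e. $M$ restricted to $W_0$ has singular values exactly $\s_1(M),\dots,\s_r(M)$. Now let $V = W_0^{\perp} \subseteq \C^m$, the span of the $m-r$ smallest right-singular vectors, and let $P$ be the orthogonal projection onto $V$ as in the statement. Pick the vector $u = P X^\dagger / \|PX^\dagger\|_2 \in V$ (if $PX^\dagger = 0$ the inequality is trivial), and consider the $(r+1)$-dimensional subspace $W = W_0 \oplus \C u \subseteq \C^m$. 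The key computation is to lower bound $\prod_{i=1}^{r+1}\s_i(M') \geq \sqrt{\det\big((M')^\dagger M'\big|_W\big)}$ by factoring the Gram determinant of $M'$ acting on an orthonormal basis of $W$ adapted to the splitting $W_0 \oplus \C u$.

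Here is the main point, which is also the main obstacle: one must show the Gram matrix of $\{M' w_1, \dots, M' w_r, M' u\}$ (with $w_1,\dots,w_r$ an orthonormal basis of $W_0$) has determinant at least $\big(\prod_i \s_i(M)\big)^2 \cdot \|PX^\dagger\|_2^2$. Since $M'$ is $M$ with the extra row $X$ appended, $M' w = (Mw, \langle X^\dagger, w\rangle)$ as a vector with one extra coordinate; but $\langle X^\dagger, w_j\rangle = \langle X^\dagger, w_j\rangle$ need not vanish for $w_j \in W_0$. The trick is that $u \in V = W_0^\perp$ was chosen so that the component of $X^\dagger$ orthogonal to $W_0$ points exactly along $u$, hence the extra coordinate of $M'u$ is $\langle X^\dagger, u\rangle = \langle PX^\dagger, u\rangle = \|PX^\dagger\|_2$, while for the $w_j$ the extra coordinates, together with $Mu \in M W_0^{\perp}$ mixing, need to be handled. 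The clean way: perform column operations on the Gram matrix (equivalently, replace $u$ by $u - \sum_j \langle\cdot\rangle w_j$, which stays in $W$), to arrange that the vectors $M'w_1,\dots,M'w_r$ span exactly $M W_0 \times \{0\}$ up to the added coordinate, and the remaining vector contributes an orthogonal component of length at least $\|PX^\dagger\|_2$ in the new coordinate direction. Then the Gram determinant factors as a product, $\det(\mathrm{Gram}) \geq \det\big((MW_0)^\dagger (MW_0)\big)\cdot \|PX^\dagger\|_2^2 = \big(\prod_{i=1}^r \s_i(M)\big)^2 \|PX^\dagger\|_2^2$, where the inequality comes from discarding the (nonnegative) contribution of $Mu$'s component inside $M W_0^\perp$. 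Taking square roots and using $\prod_{i=1}^{r+1}\s_i(M') \geq \sqrt{\det(\mathrm{Gram})}$ finishes the proof. I expect the bookkeeping in this last factorization — keeping track of which coordinate is the "new" one and why the cross terms can only help — to be the only real content; everything else is the standard variational characterization of $\prod \s_i$.
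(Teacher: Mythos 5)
Your plan works and is a genuinely different realization of the same variational idea, differing in which side the compression acts on. The paper compresses the codomain: its $Q$ has rows the top $r$ left-singular vectors of $M$ together with $e_{n+1}$, so that $QM'$ has rows $\sigma_i(M)v_i^\dagger$ and $X$, and $\det(QM'{M'}^\dagger Q^\dagger)$ is computed \emph{exactly} as $\|PX^\dagger\|_2^2\prod_{i\le r}\sigma_i(M)^2$ by base-times-height; combined with $\det(QM'{M'}^\dagger Q^\dagger)\le\prod_{i\le r+1}\sigma_i(M')^2$ this finishes immediately. You compress the domain to $W_0\oplus\C u$; this also works but is slightly less clean, because the Gram determinant of $\{M'w_1,\dots,M'w_r,M'u\}$ is only \emph{lower-bounded} by the target quantity, with each $M'w_j$ picking up a nonzero $(n+1)$-th coordinate $a_j := Xw_j$. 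Your finishing step as stated is off: replacing $u$ by $u-\sum_j\langle\cdot\rangle w_j$ does not change the vectors $M'w_j$ and so cannot ``arrange that $M'w_1,\dots,M'w_r$ span exactly $MW_0\times\{0\}$.'' The fix is easy and goes the other way: either replace each $w_j$ by $w_j-(a_j/b)u$, where $b:=Xu=\|PX^\dagger\|_2$ (a unimodular change of basis of $W$ that kills the extra coordinate of $M'w_j$, after which base-times-height gives the claimed lower bound), or observe that the Gram matrix equals $\mathrm{diag}\big(\sigma_1(M)^2,\dots,\sigma_r(M)^2,\|Mu\|_2^2\big)+cc^\dagger$ with $|c_j|=|a_j|$ for $j\le r$ and $|c_{r+1}|=b$, and expand the rank-one perturbation of a diagonal determinant as a sum of nonnegative terms, one of which is $b^2\prod_i\sigma_i(M)^2$. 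Either route rests on the SVD orthogonality $\langle Mw_i,Mw_j\rangle=\sigma_i(M)^2\delta_{ij}$ and $\langle Mw_i,Mu\rangle=0$ that you correctly invoke.
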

\begin{proof}
Let $Q$ be any $(r+1)\times (n+1)$ matrix such that $QQ^\dag = I_{r+1}$. By Courant--Fischer applied to $M'M'^{\dagger}$, we have that $s_k(QM'M'^{\dagger}Q^{\dag})\le \s_k(M')^2$ for all $k$ and therefore 
\[\det(QM'{M'}^{\dagger}Q^{\dag})\le \prod_{i=1}^{r+1} \s_i(M')^2.\]
We now choose $Q$ such that $\det(QM'M'^{\dagger}Q^{\dag})$ is exactly the RHS of the claimed inequality. Let $Q'$ be an $r\times n$ matrix with rows corresponding to biggest $r$ unit left-singular vectors of $M$. $Q$ is obtained by adding an extra row and column to $Q'$ which are all zeros, except for the bottom right entry which is $1$. It is trivial to verify using orthogonality of singular vectors that $QQ^\dag = I_{r+1}$.

Note that $QM'$ is an $(r+1)\times m$ matrix. The first $r$ rows of $QM'$ are exactly the first $r$ right-singular vectors of $M'$ with the $i$-th largest singular vector scaled by $\sigma_i(M')$; this is most easily seen by using the singular value decomposition of $M'$. The final row of $QM'$ is exactly $X$. Therefore using the base times height formula for determinants, we have that 
\[\det(QM'{M'}^\dagger Q^{\dagger}) = \on{dist}(X,\on{span}_{\C}(\{e_iQ'M\}_{1\le i\le r}))^2\prod_{i=1}^{r} \s_i(M)^2 = \|P X^\dagger \|_2^2 \cdot \prod_{i=1}^{r} \s_i(M)^2,\]
which completes the proof. 
\end{proof}

We also require Cauchy interlacing (for singular values). 

\begin{fact}\label{fact:cauchy-interlacing}
Let $M$ be an $n\times m$ matrix and let $M'$ be $M$ with a row added. Then
\[ \s_m(M) \le \s_m(M') \le \s_{m-1}(M) \le \s_{m-1}(M') \le \cdots \le \s_1(M) \le \s_1(M'). \]
\end{fact}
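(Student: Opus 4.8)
The plan is to reduce Fact~\ref{fact:cauchy-interlacing} to the classical interlacing theorem for eigenvalues of a Hermitian matrix under a rank-one positive semidefinite perturbation. Write $X$ for the $1\times m$ row appended to $M$ to form $M'$, so that $M'$ is obtained by placing $X$ below $M$; then, viewing everything as $m\times m$ Hermitian matrices,
\[ {M'}^\dagger M' = M^\dagger M + X^\dagger X, \]
where $X^\dagger X\succeq 0$ has rank at most $1$. Since $\s_i(M)^2=\lambda_i(M^\dagger M)$ and $\s_i(M')^2=\lambda_i({M'}^\dagger M')$ for $1\le i\le m$, with $\lambda_1\ge\cdots\ge\lambda_m$ the eigenvalues listed in non-increasing order, the whole chain of inequalities follows once we show that for any Hermitian $A$ of size $m$ and any $B=A+vv^\dagger$ we have $\lambda_i(A)\le\lambda_i(B)$ for all $i$ and $\lambda_i(B)\le\lambda_{i-1}(A)$ for $2\le i\le m$; taking square roots then yields exactly $\s_m(M)\le\s_m(M')\le\s_{m-1}(M)\le\cdots\le\s_1(M)\le\s_1(M')$.

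To prove this eigenvalue interlacing I would invoke the Courant--Fischer min--max characterization $\lambda_i(H)=\max_{\dim V=i}\min_{x\in V,\ \|x\|=1}\langle Hx,x\rangle=\min_{\dim U=m-i+1}\max_{x\in U,\ \|x\|=1}\langle Hx,x\rangle$. For the lower bound, $\langle Bx,x\rangle=\langle Ax,x\rangle+|\langle v,x\rangle|^2\ge\langle Ax,x\rangle$ for every $x$, so the $\max$--$\min$ formula gives $\lambda_i(B)\ge\lambda_i(A)$ at once. For the upper bound, fix $i\ge 2$, take a subspace $U$ of dimension $m-i+2$ attaining the outer $\min$ in the formula for $\lambda_{i-1}(A)$, and set $U'=U\cap v^\perp$; then $\dim U'\ge (m-i+2)+(m-1)-m=m-i+1$, so $U'$ contains a subspace of dimension exactly $m-i+1$, and on $U'$ one has $\langle Bx,x\rangle=\langle Ax,x\rangle$, whence $\lambda_i(B)\le\max_{x\in U',\ \|x\|=1}\langle Bx,x\rangle=\max_{x\in U',\ \|x\|=1}\langle Ax,x\rangle\le\lambda_{i-1}(A)$. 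Equivalently, one can run the same two arguments directly on $M$ and $M'$, using $\s_i(M')^2=\max_{\dim V=i}\min_{v\in V,\ \|v\|=1}\|M'v\|_2^2$ together with the identity $\|M'v\|_2^2=\|Mv\|_2^2+|Xv|^2$, and intersecting the relevant extremal subspace with $\ker X$ for the upper bound.

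I do not expect a real obstacle here, as this is a textbook fact; the only points needing a line of care are the dimension count in the intersection step (that $\dim(U\cap v^\perp)\ge m-i+1$, so that a subspace of the exact required dimension exists) and observing that the statement makes sense and the argument goes through regardless of the relation between $n$ and $m$, since throughout we regard $\s_1(M)\ge\cdots\ge\s_m(M)$ as the square roots of the eigenvalues of the $m\times m$ Gram matrix $M^\dagger M$ (some of which vanish when $n<m$).
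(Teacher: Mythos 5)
Your proof is correct. The paper states Fact~\ref{fact:cauchy-interlacing} without proof, treating it as the standard Cauchy interlacing theorem for singular values, and your argument is exactly the textbook route: passing to the Gram matrices via ${M'}^\dagger M' = M^\dagger M + X^\dagger X$ and applying Courant--Fischer to a rank-one positive semidefinite perturbation, with the dimension count $\dim(U\cap v^\perp)\ge m-i+1$ handled correctly and the square-root step valid since all quantities are nonnegative. Nothing is missing, and your closing remark that the argument is insensitive to the relation between $n$ and $m$ (with $\sigma_i(M)$ defined through the $m\times m$ Gram matrix, as in the paper's convention of right singular values) is the right point of care for how the fact is used at half-integer times.
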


We now prove Lemma~\ref{lem:rank-jump}.
\begin{proof}[Proof of Lemma~\ref{lem:rank-jump}]
We only need to put the pieces together that we have already built up. Note that $A_{t,z}^{\ast} = d^{-1/2}A - zI = d^{-1/2}(A - zd^{1/2}I) = d^{-1/2}A_{t,zd^{1/2}}$. If $\sigma_{r+1}(A_{t,z}^{\ast}) \geq \eta_{r+1}$ then we have
\[ \prod_{i=1}^{r+1} \sigma_i(A_{t+1/2,z}^{\ast}) \geq 
\prod_{i=1}^{r+1} \sigma_i(A_{t,z}^{\ast}) \geq \eta_{r+1} \prod_{i=1}^{r} \s_i(A_{t,z}^{\ast}), \] 
where the first inequality holds by interlacing. Thus we are done in this case.

Otherwise $\sigma_{r+1}(A_{t,z}^{\ast}) < \eta_{r+1}$, in which case we use that $A_t \in \cE_{r+1}$. Let us focus on the situation where $t$ is half-integral and we are adding a row. The integral case is similar except we apply the relevant argument to $A_t^\dagger$, so we omit it. By Lemma~\ref{prop:walk-row},
\[ \prod_{i=1}^{r+1} \s_i(A_{t+1/2,z}^{\ast}) \geq \|P_{r+1,A_{t,z}^{\ast}}(d^{-1/2}X + w)^\dagger\|_2 \cdot \prod_{i=1}^{r} \s_i(A_{t,z}^{\ast}), \]
where $X \sim \row_{\lceil t\rceil}(\xi,p)$ is the new row added and $w$ is all $0$ except perhaps a nonzero element corresponding to $-z$ on the diagonal (when $t$ is half-integral). We may now apply Lemma~\ref{lem:proj-anti-concentration} with $r$ replaced by $r+1$ to see
\begin{align*}
\PP_X\big( \|P_{r+1,A_{t,z}^{\ast}} (d^{-1/2}X + w)^{\dagger}\|_2 < \eta_{r+1} \big)  &= \PP_X\big( \|P_{r+1,A_{t,zd^{1/2}}} (X + d^{1/2}w)^{\dagger}\|_2 < d^{1/2}\eta_{r+1} \big) \\
&= O(\eps + (\log_{(2)} d )^{-1/2})   
\end{align*}
as desired.
\end{proof}

\section{Analysis of the process}\label{sec:analysis}
Recall that we index time in half-integer steps from the interval $[m,n]$ so that at each integer time $t$ we have that $A_t$ is a $t\times t$ matrix and at time $t+1/2$, $A_{t+1/2}$ is the $t \times (t+1)$ matrix which is $A_{t+1}$ with the bottom-most row deleted. Recall $r(t)$ can be defined by setting $r(m)=(1-\eps/4)m$ and
\[ r(t+1/2) = \begin{cases}\, r(t)+1 &\qquad \text{ if }  T_{r(t)+1,t+1/2}( z ) \leq T_{r(t),t}(z) + \delta_{r(t)+1} ;\\
\, r(t) &\qquad \text{ otherwise.} \end{cases}\] 
In what follows, if $M$ is an $n\times m$ matrix, we define $\sigma_r(M) = 0 $ if $r > m$.
We track the evolution
of the random variable $h(t) = t - r(t)$. The main goal of this section is to show that, whp, we have taken \emph{all} singular values into our sum by the end of the process.

\begin{lemma}\label{lem:main-height-lemma} 
For all $z \in \C \setminus \{0\}$, we have 
\[ \PP( h(n) = 0 ) = 1-o(1). \]
\end{lemma}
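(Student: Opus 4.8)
\textbf{Proof proposal for Lemma~\ref{lem:main-height-lemma}.}

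The plan is to run a union bound over the $2\eps n$ half-integer steps, combining the per-step drift estimate from Lemma~\ref{lem:rank-jump} with the quasi-randomness guarantee from Lemma~\ref{lem:quasi-random}, and then invoke a concentration argument for the resulting (slightly dependent) random walk $h(t)$. First I would fix $z \in \C\setminus\{0\}$; after rescaling I may assume $d^{-1/2}\le |z|\le d^{1/2}$, since the original normalization $A^\ast = d^{-1/2}A$ and the rescaling $A_{t,z}^\ast = d^{-1/2}A_{t,zd^{1/2}}$ let me absorb the $d^{1/2}$ factors (the cases $|z|$ very large or very small are handled separately/trivially, as $U^\circ$ is smooth there and the relevant singular value bounds are easy). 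Recall $h(m) = \eps m/4$ and that at each half-step $h$ either stays put or decreases by $1$; $h(t+1/2) = h(t)$ exactly when we fail to take on the new singular value, i.e. when the event in Lemma~\ref{lem:rank-jump} fails, which happens with probability $O(\eps + (\log_{(2)}d)^{-1/2}) =: q$ \emph{provided} the relevant quasi-randomness event $\cE_{r(t)+1}$ holds for $A_t$ (or $A_t^\dagger$).

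The key steps, in order: (1) Condition on the event $\mathcal{G}$ that for every integer $t\in[m,n]$ the matrix $A_t$ satisfies $A_t\in\cE_{r}$ (and $A_t^\dagger\in\cE_r$) for all relevant $r$ in the window $[t - n/d^{1/4}, \lceil t\rceil]$; by Lemma~\ref{lem:quasi-random} and a union bound over the $O(\eps n)$ values of $t$ and the $O(n/d^{1/4})$ values of $r$, using that $\PP(A_t\notin\cE_r)\le \exp(-d^{1/2}(\lceil t\rceil - r+1))$, the total failure probability is $o(1)$ (the geometric decay in $\lceil t\rceil - r+1$ makes the sum over $r$ converge, and there is an overall factor polynomial in $n$ which is killed by $\exp(-d^{1/2})$ since $d\to\infty$). (2) On $\mathcal{G}$, the process $h(t)$ is dominated by a random walk that decreases by $1$ with probability $\ge 1-q$ at each of the $2\eps n$ steps (here I need $r(t)$ to stay inside the window where $\cE_{r+1}$ is controlled — but since $h$ only decreases, $r(t)\ge r(m) = (1-\eps/4)m \ge t - 2\eps n$, and one checks $t - r(t) \le h(m) = \eps m/4 < n/d^{1/4}$ once $d$ is large, so Lemma~\ref{lem:rank-jump}'s hypotheses are met throughout). (3) Since the expected total decrease is $(1-q)\cdot 2\eps n \ge (2-o(1))\eps n$, which exceeds the starting height $\eps m/4 \le \eps n/4$ by a constant factor, a Chernoff/Azuma bound (applied to the Doob martingale of the step-indicator variables, which are conditionally Bernoulli with success probability $\ge 1-q$ given the past) shows the total decrease is $\ge \eps n/4$ with probability $1 - e^{-\Omega(\eps n)} = 1-o(1)$. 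Combining with step~(1) gives $h(n)=0$ whp.

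The main obstacle I anticipate is step~(2)/(3): making rigorous that the step-indicators can be coupled to (or stochastically dominate) genuinely independent-enough Bernoullis so that a concentration inequality applies. The subtlety is that whether $\cE_{r(t)+1}$ holds for $A_t$ depends on the whole matrix $A_t$, whereas the randomness in Lemma~\ref{lem:rank-jump} is only over the newly added row/column; and $r(t)$ itself is random, so the ``target'' singular-value index being controlled drifts with the process. The clean way around this is to condition on $\mathcal{G}$ \emph{first} (it is measurable with respect to the final matrix $A=A_n$, or one exposes it appropriately), and then observe that, conditionally on $\mathcal{G}$ and on the history up to time $t$, Lemma~\ref{lem:rank-jump} gives a lower bound of $1-q$ on the conditional probability of a successful step that is \emph{uniform} over all histories; this uniform bound is exactly what is needed to dominate the sum of indicators below by a $\mathrm{Bin}(2\eps n, 1-q)$ variable, to which Chernoff applies. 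A minor additional point to check is the edge behavior when $r(t)$ would exceed $\lceil t\rceil$ (i.e., $h$ hits $0$ early) — but that is the desired outcome, and one simply freezes the walk there. I would also note that the smoothness of $U^\circ$ away from $0$ and the boundary $|z|=1$ is not needed here; the pointwise statement for every $z\neq 0$ suffices because $h(n)=0$ is exactly the event that no small singular value is omitted, which is all Lemma~\ref{lem:main-height-lemma} asserts.
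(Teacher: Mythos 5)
There is a genuine gap in step~(1), and it is precisely the subtlety the paper's construction of $\cQ_t$ and $\wt{\cQ}_t$ is designed to handle. You define $\mathcal{G}$ by requiring $A_t\in\cE_r$ (and $A_t^\dagger\in\cE_r$) for \emph{every} $r$ in the window $[t-n/d^{1/4},\lceil t\rceil]$ and \emph{every} $t$, and you assert the total failure probability is $o(1)$ because ``an overall factor polynomial in $n$ is killed by $\exp(-d^{1/2})$ since $d\to\infty$.'' That last claim is false under the standing hypothesis $d\to\infty$: the bound from Lemma~\ref{lem:quasi-random} for $r=\lceil t\rceil$ is only $\exp(-d^{1/2})$, and when you sum over the $\Theta(\eps n)$ values of $t$ you get $\Theta(\eps n\,e^{-d^{1/2}})$, which diverges whenever $d=O((\log n)^2)$ (e.g.\ $d=\log\log n$ is allowed). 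So your event $\mathcal{G}$ does not hold with high probability, and conditioning on it is not available.

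The paper's proof circumvents exactly this by noting that one does not need $\cE_{r(t)+1}$ at \emph{every} time, only when the walk is ``behind schedule,'' i.e.\ when $h(t)\ge\lfloor(n-t)/8\rfloor$. The event $\cQ_t$ is therefore restricted to $r\le t-\lfloor(n-t)/8\rfloor$, for which $\lceil t\rceil-r+1\gtrsim n-t$; Lemma~\ref{lem:quasi-random} then gives failure probability $\exp(-d^{1/2}\Omega(n-t))$, and summing over $t$ yields $O(e^{-d^{1/2}})$ with \emph{no} linear-in-$n$ factor. Correspondingly, the drift hypothesis in the random-walk lemma (Lemma~\ref{lem:random-walk}) is only imposed when $Y_s\ge\lfloor(T-s)/16\rfloor$, which matches this restricted union bound. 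To repair your step~(1) you would need this idea (or something equivalent); without it the proof only works in the regime $d\gg(\log n)^2$, which is far short of $pn\to\infty$. Two smaller points you should also tighten: the step size is $\pm1/2$, not $\pm1$; and $h$ does not freeze at $0$ — at every integer time with $h(t)=0$ the next step forces $h(t+1/2)=1/2$ (a new zero singular value appears), which is why the paper introduces $h^\ast$ identifying $h\in\{0,1/2\}$ before running the martingale/exponential-moment argument. Your reduction ``after rescaling'' to $d^{-1/2}\le|z|\le d^{1/2}$ is also unnecessary and, as phrased, incoherent: for fixed $z\neq0$ this window is automatic once $n$ (hence $d$) is large.
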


We shall also see that this immediately implies the following result, which is the ``hard'' direction of our two-sided comparison of $T_n(z)$ and $U_n(z)$. 

\begin{lemma}\label{lem:U-at-most-T}
For all $z \in \C \setminus \{0\}$, we have 
$U_n(z) \leq T_n(z) + o(1)$ with probability $1-o(1)$.
\end{lemma}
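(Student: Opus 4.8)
The plan is to deduce Lemma~\ref{lem:U-at-most-T} from Lemma~\ref{lem:main-height-lemma} essentially by unwinding the definitions, together with a crude a priori bound on the largest singular value to handle the event (of probability $o(1)$) where $h(n)\neq 0$.

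First I would record what happens on the event $\{h(n)=0\}$. By definition $r(n)=n-h(n)=n$, so we have taken \emph{all} $n$ singular values into our sum, and moreover at each of the $2\eps n$ half-integer steps on which $r$ increased we had the inequality $T_{r(t)+1,t+1/2}(z)\le T_{r(t),t}(z)+\delta_{r(t)+1}$. Between these ``successful'' steps, $r$ stays constant while $t$ increases, and Cauchy interlacing (Fact~\ref{fact:cauchy-interlacing}) guarantees that adding a row or column only pushes each singular value $\sigma_i$ up; since $T_{r,t}(z)=-\tfrac1n\sum_{i=1}^r\log\sigma_i(A_{t,z}^\ast)$ and the logarithm is increasing, each non-successful step can only \emph{decrease} $T_{r,t}(z)$. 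Chaining these together along the path $A_{m,z}^\ast\to A_{m+1/2,z}^\ast\to\cdots\to A_{n,z}^\ast$, we get on $\{h(n)=0\}$ that
\[ U_n(z)=T_{n,n}(z)\le T_{(1-\eps/4)m,\,m}(z)+\sum_{t}\delta_{r(t)+1}\le T_n(z)+\sum_{r=(1-\eps/4)m}^{n}\delta_r = T_n(z)+o(1), \]
where we used that $r(t)+1$ ranges over distinct values in $[(1-\eps/4)m+1,n]$ over the successful steps and \eqref{eq:delta-sum}. This is exactly the claimed inequality on an event of probability $1-o(1)$.

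It remains to check that we do not need to worry about the complementary event, but actually the above argument already gives the full conclusion: Lemma~\ref{lem:main-height-lemma} says $\PP(h(n)=0)=1-o(1)$, and on that event we have shown $U_n(z)\le T_n(z)+o(1)$ deterministically, so $\PP\big(U_n(z)\le T_n(z)+o(1)\big)\ge \PP(h(n)=0)=1-o(1)$, which is precisely the statement of Lemma~\ref{lem:U-at-most-T}. One small technical point to be careful about: when $t$ is half-integral, $A_{t,z}^\ast$ is $(t-1/2)\times(t+1/2)$, so it has $t+1/2=\lceil t\rceil$ singular values and the ``new'' zero singular value created by the column addition sits at index $t+1/2 > r(t)$; interlacing still applies in the form of Fact~\ref{fact:cauchy-interlacing} (transposing as needed) so the monotonicity of each $\sigma_i(A_{t,z}^\ast)$ for $i\le r(t)$ along the process is unaffected, and the bookkeeping of which $\delta_r$'s appear is as above.

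The main obstacle here is essentially nil — this lemma is a formal consequence of Lemma~\ref{lem:main-height-lemma} plus interlacing — so the real work is simply to present the telescoping along the half-integer time steps cleanly, making sure the index $r(t)$ of the successful steps is correctly matched to the subscripts of $\delta$ and that the interlacing direction is the right one for both row and column additions. The genuinely hard input, Lemma~\ref{lem:main-height-lemma}, is assumed.
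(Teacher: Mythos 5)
Your argument is essentially identical to the paper's: on $\{h(n)=0\}$ you chain the $\lesssim (5/4)\eps n$ "successful" inequalities $T_{r,t(r)}(z)\le T_{r-1,t(r)-1/2}(z)+\delta_r$ together with the interlacing inequality $T_{r,t}(z)\le T_{r,t-1/2}(z)$ for the intermediate steps, then invoke $\sum_{r=(1-\eps/4)m}^n\delta_r=o(1)$ and Lemma~\ref{lem:main-height-lemma}. The only blemish is a miscount (the $2\eps n$ is the total number of half-integer steps, not the number on which $r$ increased), but this does not affect the argument since you correctly sum only over the distinct values $r(t)+1\in[(1-\eps/4)m+1,n]$.
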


\noindent To prove these lemmas we study how $h(t)$ evolves. For convenience when analyzing cases $h(t)=0$, however, we define the slightly modified function
\[ h^\ast(t):=\begin{cases}h(t)&\text{if }h(t)\neq 1/2;\\0&\text{if }h(t)=1/2.\end{cases} \]
At the start of the process we have 
\[ h^\ast(m) \leq m - (1-\eps/4)m = \eps m/4 < \eps n/4, \]
by definition. For all $t < n$, we have 
\[ h^\ast(t+1/2) \leq h^\ast(t)+1/2+(1/2)\1_{h^\ast(t)=0}, \]
and finally, if $A_{t}^\dagger \in \cE_{r(t)+1}$ for $t$ integral or $A_{t} \in \cE_{r(t)+1}$ for $t$ half-integral, we know from Lemma~\ref{lem:rank-jump} that  
\[ \PP\big( h^\ast(t+1/2) \leq h^\ast(t) - (1/2)\1_{h^\ast(t) > 0} \big) = 1-o_{\substack{d\rightarrow\infty\\\eps\to0}}(1). \]
Of course, for this to be useful, we need to guarantee that the required event holds sufficiently often. One difficulty here is that our bounds on the failure of $\cE_{r}$ are not sufficiently strong to ensure that our matrix always
satisfies the appropriate condition. To get around this we use a simple idea: if at time $t$ we have $h^\ast(t) < \lfloor (n-t)/8 \rfloor $ then we don't worry about certifying that new singular values are taken into our sum, as we are already ``over-achieving'' at such a time. Thus we only need to union bound over all pairs $(t,r)$ where $r \leq t-\lfloor (n-t)/8 \rfloor$. Thus, for all times in $(1/2)\Z$, we define 
\[ \cQ_{t} =  \bigcap_{r=t - n/d^{1/4}}^{t-\lfloor (n-t)/8 \rfloor }   \{ A_t \in \cE_{r} \}, \quad \text{if }t \in  \big((1/2)\Z \big) \setminus \Z \quad \text{ and } \quad \cQ_{t} = \bigcap_{r=t - n/d^{1/4}}^{t-\lfloor (n-t)/8 \rfloor }   \{ A_t^\dagger \in \cE_{r} \}, \quad \text{ if }t \in \Z. \]
We then set \[ \wt{\cQ}_t = \bigcap_{t'\le t} \cQ_{t} , \]
where the latter intersection is over all $t' \in (1/2)\Z \cap [m,t]$. The key point here is that $\wt{\cQ}$ holds with high probability. 

\begin{lemma}\label{lem:its-union-bound-time}
$\PP(\wt{\cQ}_n) = 1-o(1)$.
\end{lemma}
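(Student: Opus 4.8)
The plan is to prove Lemma~\ref{lem:its-union-bound-time} by a direct union bound, using only the tail estimates already recorded in Section~\ref{sec:UNE}. First I would unwind the definitions: $\wt{\cQ}_n = \bigcap_t \cQ_t$ where $t$ ranges over the at most $2\eps n + 1$ elements of $(1/2)\Z\cap[m,n]$, and $\cQ_t = \bigcap_{r=1}^{R_t}\{B_t\in\cE_r\}$ with $R_t = t-\lfloor(n-t)/8\rfloor$ and $B_t := A_t$ for half-integral $t$, $B_t := A_t^\dagger$ for integral $t$. Transposing a square matrix only replaces $\xi$ by $\bar\xi$, which has the same second moment and the same $\beta$, so every lemma of Section~\ref{sec:UNE} applies verbatim to $B_t$; note also $R_t \le t$, so $r$ stays in the range $[1,\lfloor t\rfloor]$ on which $\cE_r$ is meaningful. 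Since $\cE_r = \cU_r\cap\cB\cap\cQ\cap\mc{R}$ and only $\cU_r$ depends on $r$, I would bound
\[ \PP(\cQ_t^c) \le \PP(B_t\notin\cB\cap\cQ\cap\mc{R}) + \sum_{r=1}^{R_t}\PP(B_t\notin\cU_r) \le n^{-3} + \sum_{r=1}^{R_t}\PP(B_t\notin\cU_r), \]
the first term by Lemma~\ref{lem:neighbor-size}; summed over $t$ these contribute $O(n^{-2}) = o(1)$.

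The substance is $\sum_{r\le R_t}\PP(B_t\notin\cU_r)$. Put $h = \lceil t\rceil - r + 1$. The constraint defining $\cU_r$ is vacuous unless $c^\ast h\le (n/(2d))\log_{(2)}d$, so such $r$ contribute nothing; for the remaining $r$ one has $h\le (n/(2c^\ast d))\log_{(2)}d\le n/d^{1/4}$ for $d$ large (this is where one checks $\log_{(2)}d\le 2c^\ast d^{3/4}$), hence $r\ge t-n/d^{1/4}$ and Lemma~\ref{lem:unstructured-graph} gives $\PP(B_t\notin\cU_r)\le\exp(-2d^{1/2}h)$. The key observation is that $r\le R_t$ forces $h\ge\lfloor(n-t)/8\rfloor+1$, so the sum is dominated by a geometric tail:
\[ \sum_{r=1}^{R_t}\PP(B_t\notin\cU_r) \le \sum_{h\ge\lfloor(n-t)/8\rfloor+1}\exp(-2d^{1/2}h) \le 2\exp\big(-2d^{1/2}(\lfloor(n-t)/8\rfloor+1)\big) \]
for $d$ large.

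Finally I would sum over $t$. Writing $s=n-t\in(1/2)\Z\cap[0,\eps n]$ and grouping the half-integers $s$ by the value $j=\lfloor s/8\rfloor$ (at most $16$ values of $s$ per $j$), the total is $\le\sum_{j\ge0}32\exp(-2d^{1/2}(j+1))\le 64\exp(-2d^{1/2})$ for $d$ large, which is $o(1)$ since $d\to\infty$. Combined with the $O(n^{-2})$ above, this yields $\PP(\wt{\cQ}_n^c)=o(1)$, i.e.\ $\PP(\wt{\cQ}_n)=1-o(1)$.

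The one genuinely load-bearing point --- the ``obstacle'', mild as it is --- is that the union bound must not lose a factor of $n$: a priori there are $\Theta(\eps n)$ choices of $t$ and up to $\Theta(n)$ choices of $r$, while $\exp(-2d^{1/2}h)$ is only summably small once $h$ is bounded away from $0$. This is exactly why $\cQ_t$ demands quasi-randomness of $B_t$ only for $r\le t-\lfloor(n-t)/8\rfloor$ rather than all the way to $r=\lceil t\rceil$: the buffer $\lfloor(n-t)/8\rfloor$ forces $h\ge(n-t)/8$, which simultaneously converts the inner sum over $r$ and the outer sum over $t$ into convergent geometric series. Everything else is bookkeeping, chiefly the compatibility of the regime $r\ge t-n/d^{1/4}$ of Lemma~\ref{lem:unstructured-graph} with the non-vacuous range of $\cU_r$, which as noted holds for all large $d$.
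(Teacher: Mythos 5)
Your proof is correct and follows essentially the same route as the paper's: a union bound over pairs $(t,r)$ using the tail estimates of Section~\ref{sec:UNE}, with the buffer $\lfloor(n-t)/8\rfloor$ in the definition of $\cQ_t$ turning both the inner sum over $r$ and the outer sum over $t$ into convergent geometric series. The paper's version is considerably more terse: it simply writes
\[ \PP(\wt{\cQ}_n^c) \le \sum_t \sum_{r=1}^{t-\lfloor(n-t)/8\rfloor}\PP(A_t\notin\cE_r) \le \sum_{t,r}\exp\big(-d^{1/2}(\lceil t\rceil-r+1)\big)\le\sum_{k\ge1}2\exp(-kd^{1/4}), \]
invoking Lemma~\ref{lem:quasi-random} directly. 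As you correctly observe, that lemma is stated only under the hypothesis $r\ge t-n/d^{1/4}$, which is \emph{not} guaranteed across the full range $r\le t-\lfloor(n-t)/8\rfloor$; the paper leaves implicit the fact that for smaller $r$ the defining condition of $\cU_r$ has an empty range of subset sizes $S$ and hence is vacuous, so the only content of $\cE_r$ there is $\cB\cap\cQ\cap\mc{R}$, which Lemma~\ref{lem:neighbor-size} controls at the level $n^{-3}$. Your draft makes exactly this split explicit --- separating the $r$-independent events, verifying $\log_{(2)}d\le 2c^\ast d^{3/4}$ so that the non-vacuous range of $\cU_r$ lands inside the hypothesis of Lemma~\ref{lem:unstructured-graph}, and then recombining. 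This is a cleaner account of the same argument rather than a genuinely different one; it buys clarity at the point where the paper is cryptic, but introduces no new ideas. Your observation that transposition replaces $\xi$ by $\bar\xi$ without affecting $|\xi|$ or $\beta$, needed for the integral-$t$ case where the event concerns $A_t^\dagger$, is likewise correct and is something the paper also leaves unstated.
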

\begin{proof}
Since each of the events in the definition of $\cQ_t$ are of the form $A_t \in \cE_r$ where $r \geq t-nd^{-1/2}$, we may apply Lemma~\ref{lem:quasi-random} to bound $\PP\big( \wt{\cQ}^c \big)$ above by
\[ \sum_{t=m}^{n}\sum_{r=t - n/d^{1/4}}^{t-\lfloor (n-t)/8\rfloor} \PP( A_t \not\in \cE_r ) \leq n^{-1} + \sum_{t=m}^{n}\sum_{r=t - n/d^{1/4}}^{t-\lfloor (n-t)/8\rfloor} \exp\big(-d^{1/2}(\lceil t\rceil-r+1)\big). \]
This is 
\[ \hspace{-2em} \leq n^{-1} + 2\sum_{t=m}^{n}\exp\big(-d^{1/2}((n-t)/8 + 2\big) \leq n^{-1} + 2\sum_{k\ge 1}\exp\big(-kd^{1/2}/8\big),\]
which tends to zero as $n$ and $d$ tend to infinity. 
\end{proof}

\vspace{2mm}

To avoid the property $\wt{\cQ}_n$ running interference with the independence in the row/column revelation process, we couple $h^\ast(t)$ to a simpler process. For this let $\cF_t$ be the $\s$-algebra corresponding to the matrix $A_t$. Now define the random variables $X_t=h^\ast(t)\1_{\wt{\cQ}_t}$.
This definition is crafted so that on $\wt{\cQ}_n$, \emph{if $X_n=0$ then $h^\ast(n) = 0$}. We now prove the following simple probabilistic lemma which shows that $X_t$ has sufficient downward drift to ensure $X_n = 0$ with high probability. Such lemmas originate in the work of Costello, Tao, and Vu \cite{CTV06} on singularity of symmetric random matrices, and have been used more recently to study singularity and rank in sparse random matrices \cite{FKSS23,GKSS23}.

\begin{lemma}\label{lem:random-walk}
Let $(\cF_s)_{s=0}^T$ be a filtration and let $(Y_s)_{s = 0}^T$ be a sequence of random variables for which $Y_s$ is $\cF_s$-measurable, $Y_s \in (1/2)\Z_{\geq 0}$, $Y_0\le T/8$, $Y_{s+1}\le Y_s+1$, and 
such that 
\[ \mb{P}(Y_{s+1}\le Y_s-(1/2)\1_{Y_s > 0} \, | \cF_s) \ge 1-q,\]
whenever $Y_s\ge\lfloor(T-s)/16\rfloor$. Then 
\begin{equation} \label{eq:random-walk-conclusion}
\mb{P}(Y_T = 0) \geq 1-4q^{1/8}.
\end{equation}
\end{lemma}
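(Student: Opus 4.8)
The plan is to isolate the only way the walk can fail to reach $0$ — a long ``bad run'' at the very end of the process — and bound its probability by a crude union bound. Throughout we may assume $q$ is below an absolute constant, since otherwise $4q^{1/8}\geq 1$ and there is nothing to prove; and since $Y_T\in(1/2)\Z_{\geq0}$ it suffices to bound $\PP(Y_T\geq 1/2)$. The key device is the line $s\mapsto (T-s)/8$. Let $s^\ast$ be the largest $s\in\{0,1,\dots,T\}$ with $Y_{s^\ast}\leq (T-s^\ast)/8$; this exists because $Y_0\leq T/8$, and on $\{Y_T\geq 1/2\}$ we have $s^\ast<T$ since $Y_T>0=(T-T)/8$. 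This choice does two things at once. First, $Y_{s^\ast+1}\leq Y_{s^\ast}+1\leq (T-s^\ast)/8+1$, so the walk is not too high just after $s^\ast$. Second, by maximality of $s^\ast$, every $s\in\{s^\ast+1,\dots,T-1\}$ satisfies $Y_s>(T-s)/8\geq\lfloor(T-s)/16\rfloor$, so at all of these steps the drift hypothesis is in force and moreover $Y_s>0$, so a ``good'' step (one at which the hypothesis succeeds) drops $Y$ by exactly $1/2$.

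Next is a deterministic counting step. Write $m=T-s^\ast$ and call a step $s$ \emph{bad} if it is active (i.e.\ $Y_s\geq\lfloor(T-s)/16\rfloor$) but $Y_{s+1}>Y_s-(1/2)\1_{Y_s>0}$. Summing the increments over the $m-1$ active steps in $\{s^\ast+1,\dots,T-1\}$ — each good one contributing $\leq -1/2$, each bad one $\leq +1$, and starting from $Y_{s^\ast+1}\leq m/8+1$ — gives
\[ Y_T \leq \tfrac{m}{8}+1-\tfrac{m-1}{2}+\tfrac{3}{2}b = -\tfrac{3m}{8}+\tfrac{3}{2}+\tfrac{3}{2}b, \]
where $b$ is the number of bad steps among those $m-1$. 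Hence $\{Y_T\geq 1/2\}\cap\{s^\ast=T-m\}$ forces $b\geq m/4-2/3$. For $m\in\{1,2\}$ this is vacuous, but there one checks directly that $Y_{s^\ast}=0$ while $Y_{s^\ast+1}\geq 1/2$, so the step $s^\ast$ is itself active and bad; for $m\geq 3$ we obtain at least $\lceil m/4-2/3\rceil\geq 1$ active bad steps among $\{T-m+1,\dots,T-1\}$.

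For the probabilistic estimate, set $Z_s=\1[\text{step }s\text{ is active and bad}]$, which is $\cF_{s+1}$-measurable with $\PP(Z_s=1\mid\cF_s)\leq q$ by hypothesis; by the tower property, for any fixed set of $j$ indices the probability that $Z_s=1$ on all of them is at most $q^j$. Combining this with the counting step and a union bound over the at most $\binom{m-1}{j}$ choices of the $j=\lceil m/4-2/3\rceil$ bad positions (and the two extra $m\in\{1,2\}$ terms) yields
\[ \PP(Y_T\geq 1/2) \leq 2q+\sum_{m\geq 3}\binom{m-1}{\lceil m/4-2/3\rceil}q^{\lceil m/4-2/3\rceil}. \]
Since $m-1\leq 4\lceil m/4-2/3\rceil+2$, each binomial coefficient is at most $4\cdot 16^{\lceil m/4-2/3\rceil}$, and at most four values of $m$ share a given value of $\lceil m/4-2/3\rceil$, so the sum is geometric and of size $O(q)$. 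This is far stronger than required: once $q$ is below an absolute constant it is at most $4q^{1/8}$, and for larger $q$ the claimed bound is trivial.

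The main obstacle is the calibration in the first paragraph: one must choose the ``line'' $s\mapsto (T-s)/8$ so that its slope beats the $\lfloor(T-s)/16\rfloor$ threshold of the drift hypothesis while its intercept is large enough to accommodate $Y_0\leq T/8$, ensuring that on the tail interval $[s^\ast,T]$ determined by the last return below the line the drift hypothesis genuinely applies at every step and $Y$ stays positive there. Once this is set up, the counting identity and the union bound are routine, modulo minor bookkeeping for half-integer values and for the short tails $m\in\{1,2\}$.
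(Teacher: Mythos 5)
Your proof is correct, and it is genuinely different from the paper's. The paper argues via an exponential moment: it sets $Z_s = q^{(T-s)/16}q^{-Y_s/2}$, shows the one-step bound $\EE[Z_{s+1}\mid\cF_s]\le 1+2q^{1/8}Z_s$ (splitting on whether $Y_s<\lfloor(T-s)/16\rfloor$, where $Z_{s+1}\le 1$ deterministically), iterates to get $\EE Z_T\le 4$, and then applies Markov to the event $\{Z_T\ge q^{-1/4}\}$, which contains $\{Y_T\ge 1/2\}$. You instead run a last-passage decomposition: you condition on the final index $s^\ast$ at which $Y_{s^\ast}\le(T-s^\ast)/8$, observe that every subsequent step is forced to be active (and has $Y_s>0$), do a deterministic increment count to show that $\{Y_T\ge 1/2, s^\ast=T-m\}$ forces at least $\lceil m/4-2/3\rceil$ active bad steps in a window of length $m-1$, and close with a union bound over the positions of those bad steps using the tower property to get $\PP\le q^j$ for any fixed $j$-set. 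I checked the counting and the edge cases $m\in\{1,2\}$ (where $Y_{s^\ast}=0$ makes step $s^\ast$ itself active and bad), and they hold. Your route is more elementary --- no exponential moment is needed --- and yields the stronger bound $O(q)$; the $q^{1/8}$ in the statement is simply an artifact of the paper's exponential calibration. The trade-off is more combinatorial bookkeeping (the half-integer steps, the short-tail cases, the geometric control of $\sum\binom{m-1}{j_m}q^{j_m}$), where the paper's martingale-style argument is a few lines and self-evidently clean.
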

\begin{proof}
We proceed by considering an appropriate exponential moment: define the random variables $Z_s = q^{(T-s)/16}q^{-Y_s/2}$ and note 
\[ \PP(Y_T\geq 1/2) = \PP( Z_T \geq q^{-1/4} )  \leq q^{1/4} \cdot \EE Z_T. \]
We now show that $\EE Z_T \leq 4$ by bounding how much the expectation moves in each step. Indeed, for each $s \leq T-1$, we claim we have 
\begin{equation}\label{eq:E-movement}
\EE[ Z_{s+1}\, |\, \cF_s\, ] \leq 1 + 2q^{1/8}Z_{s}.
\end{equation}
To see this, first consider the case $Y_s < \lfloor (T-s)/16 \rfloor$; this forces $T-s\ge 16$ as $Y_s\ge 0$. Then $Y_{s+1} \leq \lfloor (T-s)/16 \rfloor+1/2$ and thus 
\[Z_{s+1} \le q^{(T-s-1)/16 - 1/2\lfloor (T-s)/16 \rfloor-1/4 } \leq 1.\] Otherwise we have $Y_s \geq \lfloor (T-s)/16 \rfloor$ and we calculate 
\[ \EE[ Z_{s+1}\, |\, \cF_{s}\, ] \leq q^{(T-s-1)/16}\big( q^{-1/2(Y_s-(1/2)\1_{Y_s>0})} + q \cdot q^{-1/2(Y_s+1)}\big) \leq 1 + 2q^{1/8}Z_{s}. \]
This establishes \eqref{eq:E-movement}. We now iteratively apply \eqref{eq:E-movement} to see 
\begin{align*}
\mb{E}[Z_T]&\le 1+2q^{1/8}+(2q^{1/8})^2+\cdots+(2q^{1/8})^{T-1}+(2q^{1/8})^T\mb{E}[Z_0]\le 2 + (2q^{1/8})^{T}\le 4.
\end{align*}
For the inequality we used the fact we may assume $q \le 1/2^{16}$, otherwise the conclusion at \eqref{eq:random-walk-conclusion} is trivial.
\end{proof}

\begin{proof}[Proof of Lemma~\ref{lem:main-height-lemma}] 
All that remains is to check that the pieces fit together. Note that $X_t$ is $\cF_t$-measurable, by definition. Let $Y_s=X_{m+s/2}$ for $s\in[0,2(n-m)]\cap\Z$. We have $T = 2\eps n$ for our process and our starting point $Y_0=X_m$ satisfies $Y_0\le \eps m/4 \leq \eps n/4 = T/8$, by definition. We now claim that
\begin{equation}\label{eq:walk-check}
\PP\big( X_{t+1/2}  \leq  X_t -(1/2)\1_{X_t > 0}\, |\, \cF_{t} \big)  \geq 1- o_{d\rightarrow \infty}(1),
\end{equation}
whenever $X_t \geq \lfloor (n-t)/8 \rfloor$. We check this inequality pointwise. If $A_t \in \cQ_t^c$ then \eqref{eq:walk-check} holds, by definition, since then $X_{t+1/2}=X_t=0$. On the other hand, if $A_t \in \cQ_t$ then we apply Lemma~\ref{lem:rank-jump} to see that 
\[ \PP\big( X_{t+1/2} \leq X_t - (1/2)\1_{X_t>0}\, |\, \cQ_t \big) 
= \PP\big( h^\ast(t+1/2)  \leq h^\ast(t)-(1/2)\1_{h^\ast(t) > 0}\, |\, \cQ_t \big) \geq 1 - o_{\substack{d\rightarrow\infty\\\eps\to0}}(1).\]
Thus $X_t$ is a random process that satisfies the hypothesis of Lemma~\ref{lem:random-walk}. We apply Lemma~\ref{lem:its-union-bound-time} and then Lemma~\ref{lem:random-walk} to see that
\[ \PP\big( h^\ast(n) > 0 \big) = \PP\big( h^\ast(n) > 0 \cap \wt{\cQ}_n \big) + o(1) \le \PP\big( X_n > 0 \big)  + o(1)  = o(1).\]
Using that $h^\ast(n)=0$ implies $h(n)=0$ (since $n$ is integral), we are done.
\end{proof}

\vspace{2mm}

We now deduce the important consequence of Lemma~\ref{lem:main-height-lemma}, Lemma~\ref{lem:U-at-most-T}, which says that $U_n(z) \leq T_n(z) + o(1)$, for all $z \not= 0$.

\begin{proof}[Proof of Lemma~\ref{lem:U-at-most-T}]
Let us fix a sequence of matrices for which $h(n)=0$. We now argue deterministically that $U_n(z) \leq T_n(z) +o(1)$. Since $h(n)=0$ holds with probability $1-o(1)$, by Lemma~\ref{lem:main-height-lemma}, we will conclude that $U_n(z) \leq T_n(z) +o(1)$ holds with probability $1-o(1)$ and thus conclude the proof of the lemma.

Now, since $h(n)=0$, for each $(1-\eps/4)m \leq r \leq n$, there is $t = t(r) \in [m,n] \cap \frac{1}{2}\Z$ so that  
\begin{equation}\label{eq:heightlem-1} T_{r,t(r)}( z ) \leq T_{r-1,t(r)-1/2}(z)  + \delta_{r} \end{equation}
and such that $t(\cdot)$ is a strictly increasing function. Additionally, by Fact~\ref{fact:cauchy-interlacing}, we have
\begin{equation}\label{eq:heightlem-2} T_{r,t}(z)\le T_{r,t-1/2}(z), \end{equation}
for all $t$. Chaining \eqref{eq:heightlem-1} and \eqref{eq:heightlem-2} together gives 
\[\,U_{n}(z) = T_{n,n}(z) \leq T_{m,(1-\eps/4)n}(z)\hspace{1mm} + \sum_{r= (1-\eps/4)m}^n \delta_r  = T_n(z) + o(1), \]
where the last equality holds since $\sum_r \delta_r = O(\eps)$, as noted at \eqref{eq:def-delta_r}.
Thus $U_n(z) \leq T_n(z) +o(1)$ under the assumption $h(n)=0$. Since $h(n)=0$ with probability $1-o(1)$, by Lemma~\ref{lem:main-height-lemma}, we conclude the proof of the lemma.
\end{proof}

\section{Completion of the proof}\label{sec:completion}
With the main difficulty of the proof behind us, we no longer need to consider the sequence $A_t$, and now only require $A_n$ and $A_m$. It is convenient to work with a modified version of $T_n(z)$ which is a function of $A_{n,z}^{\ast}$ instead of $A_{m,z}^{\ast}$. We define 
\[ T_n^{(1)}(z) = -\frac{1}{n}\sum_{i=1}^{(1-\eps/4)m}\log(\sigma_i(A_{n,z}^{\ast})). \]
We also need a related function that \emph{almost} does not depend on $A_m$
\[ T_n^{(2)}(z) = -\frac{1}{n}\sum_{i=2\eps n}^{(1-\eps)n}\log(\sigma_i(A_{n,z}^{\ast})) - \frac{(1-\eps/4)m - (1-3\eps)n}{n}\log(\sigma_{m(1- \eps/4)}(A_{m,z}^{\ast})).\]
We now relate $U_n(z)$, $T_n(z)$, $T_n^{(1)}(z)$, and $T_n^{(2)}(z)$. Recall that 
\[ T_n(z) = -\frac{1}{n}\sum_{i=1}^{(1-\eps/4)m} \log\big( \sigma_i(A_{m,z}^{\ast}) \big), \quad U_n(z) = - \frac{1}{n}\sum_{i=1}^{(1-\eps/4)m}  \log\big(\sigma_i(A_{n,z}^{\ast})\big). \]

\begin{fact}\label{fact:det-facts}
We have
\[
    T_n(z) \le T_n^{(2)}(z)  \qquad \text{ and } \qquad U_n(z) \ge\big( (1-\eps/4)(1-\eps) \big)^{-1} T_n^{(1)}(z) 
\]
\end{fact}
\begin{proof}
The second inequality follows from the fact that $\sigma_1(A_{n,z}^{\ast})\ge \sigma_2(A_{n,z}^{\ast})\ge\cdots \ge \sigma_n(A_{n,z}^{\ast})$. For the first inequality, we again use the ordering of the singular values to write
\[ -\frac{1}{n}\sum_{i=1}^{(1-\eps/4)m} \log\big( \sigma_i(A_{m,z}^{\ast}) \big) \le -\frac{1}{n}\sum_{i=1}^{m-3\eps n} \log\big( \sigma_i(A_{m,z}^{\ast}) \big) - \frac{(1-\eps/4)m - (1-3\eps)n}{n}\log(\sigma_{(1-\eps/4)m}(A_{m,z}^{\ast}))\]
We now apply the interlacing inequality for singular values (Fact~\ref{fact:cauchy-interlacing}) $2\eps n$ times to see that the above is at most $T^{(2)}(z)$, as desired.
\end{proof}

\vspace{2mm}

Next we will require control on the Hilbert--Schmidt norm of $A_{n,z}^{\ast}$.
\begin{fact}\label{fact:HS-norm}
With high probability, as $n\to \infty$, we have
\[\|A_{n,z}^\ast\|_\mr{HS}^2 \le 4(|z|^2 + 1)n.\]
\end{fact}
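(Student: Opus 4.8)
The plan is to compute $\|A_{n,z}^\ast\|_{\mathrm{HS}}^2$ directly from the definition $A_{n,z}^\ast = d^{-1/2}A - zI_n$, and then concentrate the random part. First I would expand the Hilbert--Schmidt norm entrywise: since $(A_{n,z}^\ast)_{i,j} = d^{-1/2}A_{i,j}$ for $i \neq j$ and $(A_{n,z}^\ast)_{i,i} = d^{-1/2}A_{i,i} - z$, we get
\[
\|A_{n,z}^\ast\|_{\mathrm{HS}}^2 = \frac{1}{d}\sum_{i \neq j} |A_{i,j}|^2 + \sum_{i=1}^n |d^{-1/2}A_{i,i} - z|^2 \le \frac{1}{d}\sum_{i,j} |A_{i,j}|^2 + 2n|z|^2 + \frac{2}{d}\sum_{i=1}^n |A_{i,i}|^2,
\]
using $|a-b|^2 \le 2|a|^2 + 2|b|^2$. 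So it suffices to show that $\frac{1}{d}\sum_{i,j}|A_{i,j}|^2 \le 2n$ and $\frac{1}{d}\sum_i |A_{i,i}|^2 \le \frac{n}{2}$ (say) with high probability, since then the total is at most $2n + 2n|z|^2 + n \le 4(|z|^2+1)n$ for $n$ large.

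The main step is concentration of $\sum_{i,j}|A_{i,j}|^2$, where $A_{i,j} = \delta_{i,j}\xi_{i,j}$ with $\delta_{i,j} \sim \mathrm{Ber}(p)$ and $\xi_{i,j}$ iid copies of $\xi$. Each term has $\mathbb{E}|A_{i,j}|^2 = p\,\mathbb{E}|\xi|^2 = p$, so $\mathbb{E}\sum_{i,j}|A_{i,j}|^2 = pn^2 = dn$. However, $\xi$ need only have a finite second moment, so $|\xi|^2$ may be heavy-tailed and a naive application of Chebyshev need not give concentration at the $o(dn)$ scale. The way around this is to use the event $\cQ$ from Section~\ref{sec:UNE}, which holds with probability $1 - n^{-3}$ by Lemma~\ref{lem:neighbor-size} and bounds the number of large entries: on $\cQ$ there are at most $2dn/H^2 + (\log n)^2$ entries of magnitude exceeding $8H/\beta$, and the maximum entry is at most $n^3$. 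I would split the sum into a truncated part $\sum |A_{i,j}|^2 \mathbbm{1}_{|A_{i,j}| \le K}$ for a suitable threshold $K$ (e.g. $K$ a large constant, or a slowly growing function), which concentrates around its mean $\le dn$ by a standard Bernstein/Chebyshev bound since the truncated variables are bounded, and a tail part $\sum |A_{i,j}|^2 \mathbbm{1}_{|A_{i,j}| > K}$, which on $\cQ$ is dominated by a dyadic sum $\sum_{H = K, 2K, 4K, \ldots} (8H/\beta)^2 \cdot (2dn/H^2 + (\log n)^2)$; the geometric-in-$H$ first terms sum to $O(dn/K)$ (small relative to $dn$ for $K$ large), while the $(\log n)^2$ contributions are controlled using the crude bound $|A_{i,j}| \le n^3$ and are negligible compared to $dn$ since $d \to \infty$. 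The diagonal sum $\sum_i |A_{i,i}|^2$ is handled the same way but is even easier, being over only $n$ terms with mean $pn = d \ll dn$.

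The one subtlety to watch is choosing $K$: it must be large enough (as an absolute constant, or mildly growing) that $O(dn/K) \ll dn$, but the truncated-sum concentration then needs the bounded random variables $|A_{i,j}|^2\mathbbm{1}_{|A_{i,j}|\le K}$ to have their sum concentrate to within $o(dn)$ of a mean that is itself $\le dn$; since these are $n^2$ independent bounded terms each of variance $O(K^2 p)$, Bernstein gives a deviation of order $\sqrt{K^2 p n^2 \log n} + K\log n = o(dn)$ as $d = pn \to \infty$, so any fixed large $K$ works. I would record the truncation level and the dyadic bookkeeping but not grind through the Bernstein constants, since this is exactly the kind of routine union-bound/concentration computation the paper defers elsewhere; indeed one could alternatively simply invoke event $\cQ$ together with a single Chebyshev estimate on the sum truncated at $K = n^{o(1)}$ and absorb everything into the high-probability conclusion.
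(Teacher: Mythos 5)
Your high-level plan---expand the Hilbert--Schmidt norm entrywise, use $|a-b|^2\le 2|a|^2+2|b|^2$ to peel off the deterministic $-zI_n$ part, and then concentrate $\sum_{i,j}|A_{i,j}|^2$ near its mean $dn$---is the right one, and matches the paper's. Your worry about heavy tails (only $\EE|\xi|^2=1$ is assumed) is also exactly the right thing to flag. However, the tail control via the event $\cQ$ does not work, and both numerical claims you make for it are false. First, each dyadic term $(8H/\beta)^2\cdot 2dn/H^2 = 128dn/\beta^2$ is \emph{independent of $H$}, so summing over the $\Theta(\log n)$ dyadic levels from $K$ up to $\sim n^3$ gives $\Theta(dn\log n)$, not $O(dn/K)$; this already overshoots the target $O(dn)$ by a $\log n$ factor. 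Second, the $(\log n)^2$ contributions sum to roughly $\sum_H (8H/\beta)^2(\log n)^2$, which is dominated by the top level $H\approx\beta n^3/8$ and is of order $n^6(\log n)^2$---astronomically larger than $dn$ since $d\le n/2$, so it is emphatically not ``negligible because $d\to\infty$''. No choice of $K$ rescues this: $\cQ$ only bounds counts of large entries and caps the maximum at $n^3$, which is too crude to control $\sum|A_{i,j}|^2\1_{|A_{i,j}|>K}$ at scale $O(dn)$, while taking $K$ large enough that on $\cQ$ there is no tail at all (i.e.\ $K\approx n^3$) destroys the concentration of the truncated sum.

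The paper's proof routes around this precisely where you got stuck. Writing $\sum_{i,j}|A_{i,j}|^2=\sum_{i,j}\delta_{i,j}|\xi_{i,j}|^2$, this has the same distribution as $\sum_{k=1}^{N}|\xi^{(k)}|^2$, where $N\sim\mathrm{Bin}(n^2,p)$ is independent of the fresh iid copies $\xi^{(k)}$ of $\xi$. Chernoff gives $N\le 1.5\,dn$ whp (since $dn\to\infty$), and the strong law of large numbers---which requires only $\EE|\xi|^2<\infty$, i.e.\ a finite first moment of $|\xi|^2$, so it is immune to heavy tails---gives $N^{-1}\sum_{k\le N}|\xi^{(k)}|^2\to 1$ in probability as $N\to\infty$. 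Together these yield $\sum_{i,j}|A_{i,j}|^2\le 2dn$ whp and the fact follows. If you prefer to keep your truncation structure, replace the $\cQ$-based tail estimate by Markov: $\EE\big[\sum_{i,j}|A_{i,j}|^2\1_{|A_{i,j}|>K}\big]=dn\,\EE\big[|\xi|^2\1_{|\xi|>K}\big]$, and the second factor tends to $0$ as $K\to\infty$ by dominated convergence; taking $K=K_n\to\infty$ slowly (say $K_n=(dn)^{1/4}$), Chebyshev controls the truncated sum while Markov controls the tail. Either route works; the route via $\cQ$ does not.
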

\begin{proof}
Note that 
\[\mb{E}\, \|A_{n,z}^\ast\|_\mr{HS}^2 = \mb{E}\, \| d^{-1/2}A_{n}-zI_n\|_\mr{HS}^2\le 2(\mb{E} \| d^{-1/2}A_{n}\|_\mr{HS}^2 + \|zI_n\|_\mr{HS}^2) = 2(1 + |z|^2)n.\]
Furthermore note that the square of the Hilbert--Schmidt norm as the entry-wise sum of the squares. Therefore by the Chernoff inequality we have that whp at most $2dn$ of the variables $\delta_{i,j}$ are nonzero and result follows via the strong law of large numbers applied to $\xi_{i,j}$ where $\delta_{i,j}$ are nonzero. 
\end{proof}

\vspace{2mm}

Finally we require the convergence of the truncated log potentials $T_n^{(1)}$,$T_n^{(2)}$. Since the proof of this is fairly standard and of a slightly different flavor to the proof here, we treat it in Section~\ref{sec:lem:conv-of-Tn}.

\begin{lemma}\label{lem:conv-Of-Tn}
For $pn \rightarrow \infty$ and $p\to 0$, let $A_n \sim \Delta_{n}(\xi,p)$ and let $\eps \rightarrow 0$ sufficiently slowly. Then 
$T_n^{(1)}(z)$ and $T_n^{(2)}(z)$ converge to $U^{\circ}(z)$ in probability. 
\end{lemma}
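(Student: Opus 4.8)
The plan is to deduce the convergence from two ingredients: convergence of the empirical singular value distributions of $A_{n,z}^{\ast}$ and of $A_{m,z}^{\ast}$ to a deterministic measure $\nu_z$ coming from circular-law theory, and routine control of $-\log x$ near $x=0$ and near $x=\infty$. The reason one works with $T_n^{(1)}$ and $T_n^{(2)}$ rather than with $U_n$ is precisely that $\Theta(\eps n)$ of the smallest singular values have been discarded, which removes the genuine difficulty near $0$.

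First I would establish that, writing $\nu_{n,z}$ for the empirical measure of $\sigma_1(A_{n,z}^{\ast}),\dots,\sigma_n(A_{n,z}^{\ast})$, one has $\nu_{n,z}\to\nu_z$ weakly in probability, where $\nu_z$ is the usual limiting singular value distribution of $n^{-1/2}G-zI$ for $G$ a Ginibre matrix (equivalently, the measure whose Stieltjes transform solves the standard fixed-point equation for the shifted Hermitization). Since $\EE|\xi|^2=1$ and $d=pn\to\infty$, this is standard: one truncates $\xi$ at a slowly growing threshold --- the truncated, recentred entries have variance $(1+o(1))/n$ and, after rescaling, bounded fourth moment, and the discarded part changes $\|A_{n,z}^{\ast}\|_{\mathrm{HS}}$ by $o(\sqrt n)$ with high probability --- and then runs the method of moments (or a Stieltjes transform argument) on the $2n\times2n$ Hermitian matrix with blocks $0,\ A_{n,z}^{\ast},\ (A_{n,z}^{\ast})^{\dagger},\ 0$; see e.g.\ \cite{BC12,GT10}. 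The identical argument applied to $A_{m,z}^{\ast}=(pn)^{-1/2}A_m-zI_m=(1-\eps)^{1/2}\big((pm)^{-1/2}A_m-(1-\eps)^{-1/2}zI_m\big)$ yields $\nu_{m,z}\to\nu_{z_\eps}$ with $z_\eps=(1-\eps)^{-1/2}z\to z$. I would then record two classical facts about $\nu_z$ for $z\neq0$ (see \cite{BC12}): that $-\int_0^\infty\log x\,d\nu_z(x)=U^{\circ}(z)$ --- exactly the computation producing the formula for $U^{\circ}$ --- and that $\nu_z$ is absolutely continuous with bounded support and $\nu_z([0,\delta])=O_z(\delta^{c})$ for some $c=c(z)>0$, uniformly for $z$ near the given point (the density is bounded near $0$ for $|z|<1$, vanishes near $0$ for $|z|>1$, and has at worst a mild power singularity at $0$ for $|z|=1$); in particular $\int_0^\infty|\log x|\,d\nu_z(x)<\infty$.

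With these inputs the rest is bookkeeping. For the top of the spectrum I would use Fact~\ref{fact:HS-norm}: on the high-probability event $\|A_{n,z}^{\ast}\|_{\mathrm{HS}}^2\le4(|z|^2+1)n$, the inequality $\log^{+}x\le x^2/B$ for $x\ge B\ge1$ gives $\tfrac1n\sum_{\sigma_i(A_{n,z}^{\ast})>B}\log\sigma_i(A_{n,z}^{\ast})\le 4(|z|^2+1)/B$, a bound uniform in $n$ that vanishes as $B\to\infty$ (similarly for $A_{m,z}^{\ast}$), so the large singular values are uniformly integrable. For the bottom I would fix $\eps$ and choose $\delta=\delta(\eps)$ to be a small fixed power of $\eps$ with $\nu_z([0,\delta])<\eps/4$, so that $\eps\log(1/\delta(\eps))\to0$ as $\eps\to0$; by weak convergence, whp at most $\eps n/2$ of the $\sigma_i(A_{n,z}^{\ast})$ lie below $\delta$, and since $(1-\eps/4)m<(1-\eps/2)n$ every singular value appearing in $T_n^{(1)}$ or in the main term of $T_n^{(2)}$ then exceeds $\delta$ whp; moreover the retained sum differs from $-\tfrac1n\sum_{\sigma_i(A_{n,z}^{\ast})\ge\delta}\log\sigma_i(A_{n,z}^{\ast})$ in at most $2\eps n$ intermediate singular values, each lying in $[\delta(\eps),1]$ for $\eps$ small, contributing $O(\eps\log(1/\delta(\eps)))=O(\eps\log(1/\eps))$. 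Then weak convergence of $\nu_{n,z}$ restricted to the compact range $[\delta,B]$, together with the top estimate, gives that as $n\to\infty$ both $T_n^{(1)}(z)$ and the main term of $T_n^{(2)}(z)$ converge to $-\int_\delta^B\log x\,d\nu_z(x)+O(\eps\log(1/\eps))+O(1/B)$, up to a harmless $1+O(\eps)$ factor from the number of retained singular values; letting $B\to\infty$, then $\eps\to0$, and invoking the log-moment identity, this tends to $U^{\circ}(z)$. Finally the correction term of $T_n^{(2)}$ has coefficient $\tfrac{(1-\eps/4)m-(1-3\eps)n}{n}=\tfrac{7\eps}{4}+O(\eps^2)$ and involves the $(\lfloor\eps m/4\rfloor+1)$-st smallest singular value of $A_{m,z}^{\ast}$, which by the edge estimate for $\nu_{z_\eps}$ lies in $[\delta'(\eps),1]$ whp for a suitable small power $\delta'(\eps)$ of $\eps$; hence the correction term is $O(\eps\log(1/\eps))\to0$, and the proof concludes.

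The main obstacle is Step~1 --- making the ESD convergence clean in the sparse regime. The truncation needs care, since the entries of $d^{-1/2}A_n$ live at scale $n^{-1/2}$ but are unbounded, and one must verify that removing the truncated part perturbs the Hermitization's spectral distribution negligibly; this is exactly where $d\to\infty$ and standard Hilbert--Schmidt and rank perturbation bounds enter, after which the limiting fixed-point equation, and hence $\nu_z$, is the classical one. The only other non-trivial point (also classical) is the quantitative behaviour $\nu_z([0,\delta])=O_z(\delta^c)$ near the origin, which is what allows $\delta(\eps)$ and $\delta'(\eps)$ to be chosen polynomially small and thereby makes both the intermediate singular values and the correction term of $T_n^{(2)}$ negligible in the iterated limit $n\to\infty$, $\eps\to0$.
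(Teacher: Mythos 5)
Your proposal follows essentially the same route as the paper: establish weak convergence of the empirical singular value distribution of $A_{n,z}^\ast$ (and of $A_{m,z}^\ast$ after the $(1-\eps)^{1/2}$ rescaling) to the Gaussian limit $\nu_z^G$ via truncation and a moment/Hermitization argument, record that $\nu_z^G$ has the correct log-integral and a quantitative mass bound near zero, treat the upper tail via Fact~\ref{fact:HS-norm} and $\log^+ x \lesssim x^2/B$, and show the discarded or shifted $O(\eps n)$ singular values near the bottom and the coefficient-$\Theta(\eps)$ correction term contribute only $O(\eps\log(1/\eps))$. The only cosmetic differences are that the paper implements the restriction to a compact window by a fixed smooth cutoff $\psi$ rather than an indicator, and cites the sharper near-zero bound $\nu_z^G([0,t))\le Ct$ where you use a weaker $O_z(\delta^c)$ that also suffices; also the intermediate singular values you bound by $1$ from above are really only bounded by $O(|z|+1)$ whp, which is immaterial.
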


To finish the proof of our main theorem we state a simple criterion that allows us conclude the convergence of the spectral measures from the point-wise convergence of the $U_n(z)$. The following lemma is a simple reworking of Theorem 2.1 in \cite{TV10}.

\begin{prop}\label{prop:unicity}
For each $n$, let $A_n$ be a random $n\times n$ matrix and let $\mu_n = \mu_{d^{-1/2}A_n}$ be the scaled spectral law of $A_n$. If $\EE\,  \|d^{-1/2}A_n\|_{\mr{HS}} = O(n^{1/2})$ and, for almost all $z \in \C$, we have that $U_{\mu_n}(z)$ converges to $U^{\circ}(z)$ in probability, then $\mu_n \rightarrow \mu^{\circ}$ in probability.
\end{prop}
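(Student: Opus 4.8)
The plan is to deduce $\mu_n\to\mu^\circ$ from the convergence of logarithmic potentials, using that a probability measure is recovered from its potential via the distributional Laplacian; this is exactly the strategy of Theorem~2.1 in \cite{TV10}, which I reproduce in the form needed here.

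First I would record two soft ingredients. \emph{Tightness in probability:} by Schur's inequality $\sum_i|\lambda_i(d^{-1/2}A_n)|^2\le\|d^{-1/2}A_n\|_{\mr{HS}}^2$, so $\int_\C|z|^2\,d\mu_n(z)\le n^{-1}\|d^{-1/2}A_n\|_{\mr{HS}}^2$, and Markov's inequality together with $\EE\|d^{-1/2}A_n\|_{\mr{HS}}=O(n^{1/2})$ gives $\PP(\mu_n(\{|z|>R\})>\delta)=O((R\sqrt\delta)^{-1})$ uniformly in $n$. \emph{The potential identity:} for any probability measure $\nu$ on $\C$ with $\int\log^+|w|\,d\nu(w)<\infty$ and any $f\in C_c^\infty(\C)$,
\[ \int_\C f\,d\nu=-\frac{1}{2\pi}\int_\C\Delta f(z)\,U_\nu(z)\,dz, \]
which follows from Fubini (justified since $\int\!\!\int|\Delta f(z)|\,|\log|z-w||\,dz\,d\nu(w)<\infty$) and $\Delta(\tfrac{1}{2\pi}\log|\cdot|)=\delta_0$. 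Applying this to $\mu_n$ and to $\mu^\circ$ reduces everything to showing that for each $f\in C_c^\infty(\C)$,
\[ \int_\C\Delta f(z)\,\big(U_{\mu_n}(z)-U^\circ(z)\big)\,dz\longrightarrow 0\quad\text{in probability}. \]

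The core step is an $L^1_{\mr{loc}}$-convergence-in-probability statement for the potentials, which I would obtain from the hypothesised a.e.\ pointwise convergence in probability together with a uniform-integrability bound. Fixing $R$ with $\operatorname{supp}f\subseteq\{|z|\le R\}$, I claim $\sup_n\EE\int_{|z|\le R}|U_{\mu_n}(z)|^2\,dz<\infty$: writing $U_{\mu_n}(z)=\int\log^+\tfrac{1}{|z-w|}\,d\mu_n(w)-\int\log^+|z-w|\,d\mu_n(w)$ and applying $(\int g\,d\mu_n)^2\le\int g^2\,d\mu_n$ to each piece, one is left to bound $\int_{|z|\le R}(\log^+\tfrac{1}{|z-w|})^2\,dz$ (an integrable singularity, bounded uniformly in $w$) and $\int_{|z|\le R}(\log^+|z-w|)^2\,dz\le C(R)(1+(\log^+|w|)^2)$, and then $\EE\int(\log^+|w|)^2\,d\mu_n(w)\le C+\EE\big(\int|w|^2\,d\mu_n\big)^{1/2}\le C+n^{-1/2}\EE\|d^{-1/2}A_n\|_{\mr{HS}}=O(1)$, using Cauchy--Schwarz and Schur's inequality again. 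With this $L^2$-bound in hand, a routine Vitali-type argument (truncate $|U_{\mu_n}-U^\circ|$ at level $K$: the truncated part converges to $0$ in probability by bounded convergence in $z$, as $U^\circ$ is bounded and continuous on $\{|z|\le R\}$, while the excess has expectation $\le K^{-1}$ times an $L^1$-bounded quantity) yields $\int_{|z|\le R}|U_{\mu_n}(z)-U^\circ(z)|\,dz\to 0$ in probability, and hence $\int f\,d\mu_n\to\int f\,d\mu^\circ$ in probability for each $f\in C_c^\infty$.

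Finally I would upgrade this to the convergence in the sense of the introduction: applying the above to a countable family dense (in sup norm) in each $C_c(\{|z|\le R\})$, using the tightness in probability from the first step to pass from $C_c^\infty$ to $C_b(\C)$, and invoking the standard principle that convergence in probability is equivalent to every subsequence having an a.s.-convergent further subsequence, one obtains $\mu_n\to\mu^\circ$ weakly in probability; since $\mu^\circ$ has a continuous distribution function, this is precisely the stated convergence of $\mu_n((-\infty,s)\times(-\infty,t))$ in probability for all $s,t$. The only genuinely non-formal point is the uniform-integrability bound on $U_{\mu_n}$ near the diagonal singularity and at infinity; everything else is soft potential theory and measure theory, exactly as in \cite{TV10}.
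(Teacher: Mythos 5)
The paper itself provides no proof of Proposition~\ref{prop:unicity}; it is stated with the remark that it is ``a simple reworking of Theorem~2.1 in \cite{TV10}'' and left there. Your reconstruction of that argument is correct and follows the Tao--Vu strategy faithfully: the distributional identity $\int f\,d\nu=-\tfrac{1}{2\pi}\int\Delta f\cdot U_\nu$ reduces weak convergence (tested against $C_c^\infty$) to $L^1_{\mathrm{loc}}$-convergence in probability of the log potentials, which you obtain from the assumed a.e.\ pointwise convergence in probability by a truncation/uniform-integrability argument, with both the tightness and the $L^2$-bound on $U_{\mu_n}$ coming out of Schur's inequality and the Hilbert--Schmidt hypothesis. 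Two minor remarks. First, continuity of $U^\circ$ plays no role in the bounded-convergence step: boundedness of $U^\circ$ on $\{|z|\le R\}$ together with a.e.\ pointwise convergence in probability is all that is needed there. Second, since the hypothesis controls only the first moment $\EE\,\|d^{-1/2}A_n\|_{\mr{HS}}$ rather than the second, your Cauchy--Schwarz detour $\EE\int(\log^+|w|)^2\,d\mu_n\lesssim 1+\EE\big(\int|w|^2\,d\mu_n\big)^{1/2}$ is exactly the right move; a direct appeal to $\EE\int|w|^2\,d\mu_n$ would have demanded $\EE\,\|d^{-1/2}A_n\|_{\mr{HS}}^2=O(n)$, which is not what the proposition assumes.
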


We now prove Theorem~\ref{thm:main}, assuming Lemma~\ref{lem:conv-Of-Tn}.

\begin{proof}[Proof of Theorem~\ref{thm:main-complex}]
Combining Fact~\ref{fact:det-facts}, Lemma \ref{lem:U-at-most-T}, and Lemma~\ref{lem:conv-Of-Tn}, we see 
\[\mb{P}(U_n(z)\le U^\circ(z) + o(1)) = 1-o(1).\]
Combining the second item of Fact~\ref{fact:det-facts} and Lemma~\ref{lem:conv-Of-Tn}, we have 
\[\mb{P}(U_n(z)\ge U^\circ(z) -o(1)) = 1-o(1).\]
Therefore, Fact~\ref{fact:HS-norm} allows us to invoke Proposition~\ref{prop:unicity}, which completes the proof.
\end{proof}

\section{Proof of Lemma~\ref{lem:conv-Of-Tn}: convergence of $T_n^{(1)}$, $T_n^{(2)}$}\label{sec:lem:conv-of-Tn}
We have now completed the proof of our main theorem, Theorem~\ref{thm:main}, modulo the proof of Lemma~\ref{lem:conv-Of-Tn} which we have deferred to this later section, since it is neither particularly difficult nor particularly original. Indeed, working with the sums $T_n^{(1)}(z)$, $T_n^{(2)}(z)$ is much easier than working with $U(z)$ since we easily rule out the possibility that any one term in these sums is large. 

 Define $\nu_{n,z}$ to be the empirical measure of all of the singular values of the shifted matrix $A_{n,z}^\ast$,
 \[ \nu_{n,z} = \frac{1}{n}\sum_{i} \delta_{\sigma_i(A_{n,z}^{\ast})} \] and define $G_n$ to be an iid $n\times n$ random matrix with entries distributed as variance $1$ complex Gaussians: $\frac{1}{\sqrt{2}}(Z_1 + Z_2i)$, with $Z_1,Z_2 \sim N(0,1)$ iid. Let $\nu_{n,z}^{G}$ be the empirical measure of the singular values of the shifted matrix $G_{n,z}=n^{-1/2}G_n-zI_n$. It is well known (see Appendix A of \cite{coo19}) that $\nu^{G}_{n,z}$ converges in probability to a limit 
$\nu^{G}_{z}$, which satisfies the following. 

\begin{fact}\label{fact:Gaussian-fact}
There exists an absolute constant $C>0$ such that the following holds. For all $z \in \C$, $\nu^G_{z}$ is a measure on $[0,|z|+C]$ with $\nu_z([0,t)) \le Ct$ for all $t$, and 
\[ U^{\circ}(z) =  -\int\log t\,  d\nu_z^G (z). \] 
\end{fact}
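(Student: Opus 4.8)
The statement to prove is Fact~\ref{fact:Gaussian-fact}, about the limiting singular value distribution $\nu^G_z$ of the shifted complex Gaussian matrix $n^{-1/2}G_n - zI_n$.

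\medskip

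The plan is to identify $\nu^G_z$ explicitly as the symmetrized version of the spectral measure of the Hermitization $H_z = \begin{pmatrix} 0 & n^{-1/2}G_n - zI \\ (n^{-1/2}G_n - zI)^\dagger & 0 \end{pmatrix}$, whose limiting law is classical. For complex Ginibre (or indeed any iid matrix, by \cite{TV10,BC12}) the symmetrized empirical singular value distribution of $n^{-1/2}G_n - zI$ converges in probability to a deterministic measure $\widetilde\nu^G_z$ whose Stieltjes transform satisfies a fixed quadratic self-consistent equation; concretely, if $m(w)$ denotes the Stieltjes transform, then $m$ solves
\[ m = \frac{1}{-w - m + \frac{|z|^2}{w+m}}\]
(this is the standard fixed-point equation for the Hermitization of the shifted Ginibre ensemble, see e.g. \cite{BC12}). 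I would quote this as the well-known starting point rather than re-derive it.

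\medskip

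From here the three claimed properties follow by soft analysis of this fixed-point equation. \textbf{Compact support:} since $\|n^{-1/2}G_n\|_{\mathrm{op}} \to 2$ almost surely (Bai--Yin), we have $\sigma_1(n^{-1/2}G_n - zI) \le 2 + |z| + o(1)$, so $\nu^G_z$ is supported on $[0, |z|+C]$ for $C = 3$, say; this does not even need the fixed-point equation. \textbf{The density bound $\nu_z([0,t)) \le Ct$:} this follows from the fact that the limiting singular value density is bounded near $0$ — one extracts from the fixed-point equation that $\Im m(w)$ stays bounded as $w \to 0^+$ along the real axis (equivalently, the smallest singular value does not accumulate too fast), giving a uniform bound on the density of $\widetilde\nu^G_z$, hence $\nu^G_z([0,t)) \le Ct$. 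Alternatively, and more robustly, one can avoid the fixed-point analysis entirely: a bound of the form $\nu_z([0,t)) \le Ct$ for the limiting measure follows by taking $n\to\infty$ in a quantitative small-ball / least-singular-value estimate for $n^{-1/2}G_n - zI$, e.g. $\PP(\sigma_{n-k+1}(n^{-1/2}G_n - zI) \le s) \le (Cs)^{?}$ times combinatorial factors — but for the Gaussian case the cleanest route is that $n^{-1/2}G_n - zI$ has a rotationally invariant (complex Gaussian) part, so the joint singular value density is explicit and the near-zero behavior is directly computable. \textbf{The log-potential identity:} $-\int \log t \, d\nu^G_z(t) = U^\circ(z)$ is exactly the statement that Girko's formula recovers the circular law for the complex Ginibre ensemble, which is classical (and is precisely the special case of Theorem~\ref{thm:main-complex} for the Gaussian ensemble that is already known); formally, $-\int \log t\, d\nu^G_z = -\lim_n \frac1n \sum_j \log \sigma_j(n^{-1/2}G_n - zI) = U_{\mu^\circ}(z) = U^\circ(z)$, where the middle equality uses that $\mu_{n^{-1/2}G_n} \to \mu^\circ$ and the necessary uniform integrability of $\log \sigma_j$ near $0$ (again guaranteed for the Gaussian ensemble).

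\medskip

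The main obstacle is the density bound $\nu_z([0,t)) \le Ct$ with a constant $C$ uniform in $z$ over the relevant range — the other two items are textbook. I would handle it by one of the two routes above: either a direct analysis of $\Im m(w)$ near the real axis from the quadratic fixed-point equation (showing $m(w)$ extends continuously and boundedly to $w=0$ when $|z|$ is bounded away from the places where the support touches $0$, and treating $|z|$ near such values separately by a crude $\sqrt t$-type bound which is still $\le Ct$ after adjusting constants — actually one needs to check the edge behavior is at worst like the square-root edge, giving $\nu_z([0,t)) \lesssim t^{3/2}$ there, which is fine), or by invoking the explicit Gaussian joint density. Since this Fact is only used as an input to Lemma~\ref{lem:conv-Of-Tn} where we need a bound of the form $\nu_z([0,t)) \le Ct$ to control the contribution of the truncated tail, any such polynomial-in-$t$ bound with exponent $\ge 1$ suffices, so I would not optimize and would simply cite the relevant computation from the Ginibre literature.
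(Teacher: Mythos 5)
The paper does not prove this fact at all: it is stated immediately after the sentence declaring that the limiting singular-value measure of the shifted complex Ginibre ensemble is ``well known,'' and no proof environment follows, only the next fact (\cref{fact:trace-moments}). So there is no paper argument to compare against; the authors treat these as citable properties of the Ginibre ensemble. Your sketch is a correct way to fill this in: compact support from the Bai--Yin operator-norm bound together with $\|zI\|_{\mathrm{op}}=|z|$; the log-potential identity from the classical circular law for complex Ginibre together with uniform integrability of $\log\sigma_j$ near $0$; and the linear-in-$t$ bound $\nu^G_z([0,t))\le Ct$ from boundedness of the limiting density near the origin, which one can extract from the Hermitization fixed-point equation, from the explicit joint singular-value density of shifted Ginibre, or (perhaps most robustly for uniformity in $z$) from the quantitative least-singular-value tail $\PP\bigl(\sigma_n(n^{-1/2}G_n - zI) \le s/n\bigr)=O(s)$ which is classical for complex Ginibre and then passing to the limit. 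You correctly single out the uniformity of $C$ in $z$ as the one delicate point, and your observation that even a $t^{3/2}$-type edge behavior at $|z|=1$ still satisfies $\le Ct$ disposes of the worst case. Nothing in your outline is wrong; it simply supplies the justification the paper elects to omit.
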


Next we establish the convergence of the measures $\nu_{n,z}$. We do this by following a now standard method; we truncate the entries of the matrix, to ensure we have bounded moments, and then apply the trace-moment method to the truncated matrix. We only include a sketch of this here and direct the reader in search of a complete proof to \cite[Section~10.3]{RT19} or \cite[Section~9]{BR19}.

\begin{fact}\label{fact:trace-moments}
For $pn \rightarrow \infty$ and $p\to 0$, let $A_n \sim \Delta_{n}(\xi,p)$. For all bounded continuous functions $\phi \colon \R \rightarrow \R$, we have 
\[ \int \phi(t)\,  d\nu_{n,z} \to \int \phi(t)\, d\nu^{G}_{z}, \]
in probability.
\end{fact}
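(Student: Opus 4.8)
The plan is to establish convergence in distribution of $\nu_{n,z}$ to $\nu^G_z$ via the method of moments, exploiting the fact that $\nu^G_z$ is compactly supported and hence determined by its moments. Since the singular values of $A_{n,z}^\ast$ are the square roots of the eigenvalues of the positive-semidefinite Hermitian matrix $H_n := A_{n,z}^\ast (A_{n,z}^\ast)^\dagger$, it suffices to show that for each fixed integer $k\ge 0$ the normalized trace moments $\frac1n\tr H_n^k$ converge in probability to $\int t^{2k}\,d\nu^G_z(t)$. By linearity and the fact that polynomials in $t^2$ are dense in $C([0,M])$ for $M$ large (and $\nu_{n,z}$ is supported near $[0,O(|z|+1)]$ with high probability by the Hilbert--Schmidt bound of Fact~\ref{fact:HS-norm}, together with an operator-norm bound on $A_n$), matching all these moments yields the claimed weak convergence against bounded continuous $\phi$.

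First I would expand $\frac1n\tr H_n^k$ as a sum over closed walks of length $2k$ in the complete bipartite-type index structure coming from the product $A^\ast_{n,z}(A^\ast_{n,z})^\dagger$, where each factor is $d^{-1/2}A_n - zI_n$. Expanding the shift, each term is a product of entries of $d^{-1/2}A_n$ and powers of $-z$, so the trace becomes a polynomial in $z,\bar z$ whose coefficients are expectations of products of entries of $d^{-1/2}A_n$ along a walk. The key computation is that $\mb{E}[\,d^{-1/2}\xi_{i,j}\delta_{i,j}\,] $-type moments behave, after normalization, like those of a variance-$1$ iid matrix: the entries have mean $0$, second moment $d^{-1}\cdot\mb{E}|\xi|^2 \cdot$ (adjusted) $= $ the right scaling, and higher moments are suppressed. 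Standard combinatorics (as in Bai--Silverstein, or the circular-law literature) shows that only walks that pair up edges in a tree-like fashion survive in the $n\to\infty$ limit, and their contribution is exactly the same as in the Gaussian case $n^{-1/2}G_n$; walks using an edge once have zero expectation, and walks with an edge of multiplicity $\ge 3$ contribute $o(1)$ because the sparsity factor $d^{-1}$ combined with $pn=d\to\infty$ kills them (this is precisely where $np\to\infty$ is used — a higher moment $\mb{E}|\delta_{i,j}\xi_{i,j}|^s = p\,\mb{E}|\xi|^s$ divided by $d^{s/2}$ gives $p^{1-s/2}n^{-s/2}\cdot$ count, which is negligible). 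A truncation of $\xi$ at a slowly growing threshold handles the possibility that $\xi$ has no high moments.

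The concentration step — upgrading convergence of $\mb{E}\frac1n\tr H_n^k$ to convergence in probability — follows from a variance bound: $\mathrm{Var}(\frac1n\tr H_n^k) = O(n^{-2})\cdot(\text{controlled})$ via the standard argument that pairs of walks sharing no edge are independent and thus contribute nothing to the variance, so only the diagonal and lower-order overlapping terms remain; alternatively one can cite a concentration-of-measure or Azuma-type argument for the Stieltjes transform. Finally I would patch the tails: with high probability $\|A_{n,z}^\ast\|_{\mathrm{op}} = O(|z|+1)$ (from Fact~\ref{fact:HS-norm}, or a crude bound suffices since we only need an $O(1)$ cap), so $\nu_{n,z}$ and $\nu^G_z$ both live on a common compact interval, and moment convergence there implies weak convergence; the statement for general bounded continuous $\phi$ then follows by a standard $\eps/3$ approximation by polynomials.

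The main obstacle is the combinatorial bookkeeping in the trace expansion: carefully organizing the closed walks, verifying that the Bernoulli--$\xi$ sparsity exactly reproduces the Gaussian moments in the limit, and checking that the error terms from high-multiplicity edges are controlled uniformly — this is the point where the hypothesis $np\to\infty$ enters essentially and where truncation of $\xi$ must be handled with care. Fortunately, as the excerpt notes, this is "by now standard," so I would present it as a sketch, citing the analogous computations in \cite{TV08, GT10} and the classical references \cite{Bai97}, and focusing the exposition on the one place the sparsity matters.
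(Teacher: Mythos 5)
Your overall plan (truncate $\xi$, run a trace/walk--counting argument on the bounded part, pass to the limit, then remove the truncation) is the same skeleton the paper uses, but there is a genuine gap in the middle: you assert that ``the entries have mean $0$'' so that walks traversing some edge exactly once vanish in expectation. This is false. The paper only assumes $\mb{E}|\xi|^2=1$, not $\mb{E}\xi=0$, and truncating $\xi$ at level $M$ makes matters worse: $\mb{E}[\delta_{i,j}\xi^{(1)}_{i,j}]=p\,\mb{E}\xi^{(1)}$ is nonzero in general. Your walk expansion therefore does not cancel the single-edge terms and does not give $\nu^G_z$ as the limit without a further idea. The paper's remedy is to center explicitly: it works instead with $A_n^{(1)}-\mb{E}A_n^{(1)}$ (so the entries genuinely have mean zero and variance $p\mb{E}|\xi^{(1)}|^2-p^2|\mb{E}\xi^{(1)}|^2$, which matches the normalization $(pn\,\mb{E}|\xi^{(1)}|^2)^{-1/2}$ up to vanishing error), and then observes that $\mb{E}A_n^{(1)}$ is a \emph{rank-one} matrix, so re-adding it perturbs only $O(1)$ singular values and leaves the empirical singular-value distribution unchanged in the limit. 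You cannot simply re-center $\xi$ itself, since $\xi\mapsto\xi-\mb{E}\xi$ destroys the Bernoulli sparsity; the centering must be at the matrix level and patched by this rank-one argument.

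Two secondary issues. First, you invoke Fact~\ref{fact:HS-norm} to claim $\|A_{n,z}^\ast\|_{\mathrm{op}}=O(|z|+1)$ and hence a common compact support for $\nu_{n,z}$ and $\nu^G_z$; this is incorrect, as that fact bounds the Hilbert--Schmidt norm by $O(n^{1/2})$, which gives only an $O(n^{1/2})$ operator-norm bound, and for sparse heavy-tailed matrices the operator norm of $d^{-1/2}A_n$ is genuinely not $O(1)$. This misstep is not fatal---convergence of all moments to those of the (compactly supported, hence moment-determinate) $\nu^G_z$ already implies weak convergence without any support bound on $\nu_{n,z}$---but it should be removed. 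Second, to undo the truncation you gesture at an ``Azuma-type'' argument; the cleaner and standard route (and the one the paper takes) is Hoffman--Wielandt applied to $A_n^{(2)}$, using the high-probability bound $\|A_n^{(2)}\|_{\mr{HS}}^2\le 2pn^2\,\mb{E}|\xi^{(2)}|^2$ so that after the $d^{-1/2}$ normalization the $L^2$ perturbation to the singular values vanishes uniformly as $M\to\infty$. With the centering, the rank-one patch, and Hoffman--Wielandt inserted, your approach becomes the paper's.
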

\begin{proof}[Proof sketch]
For the truncation step, let $M\ge 1$, let 
\[ \xi_{i,j}^{(1)} = \xi_{i,j}\1_{|\xi_{i,j}|\le M} \quad \text{ and } \quad \xi_{i,j}^{(2)}=\xi_{i,j}\1_{|\xi_{i,j}|> M}, \quad \text{so that } \quad \xi_{i,j} = \xi_{i,j}^{(1)}  + \xi_{i,j}^{(2)}.\] Assume that $M$ is large enough so that $\xi_{i,j}^{(1)}$ is non-constant. Let $A_n^{(k)}$ have entries $\delta_{i,j}\xi_{i,j}^{(k)}$ for $k\in \{1,2\}$. 
Furthermore, since the entries of $A_n^{(1)}$ are bounded by $M$, all moments of the entries exist and thefore we are able to apply a standard trace method argument: in particular we set 
\[ B_{n,z} = \big( pn\Var(|\xi_{i,j}^{(1)}|)\big)^{-1/2}\big( A_n^{(1)}-\mb{E}A_n^{(1)}\big ) - zI_n \]
and let $\nu^{(1)}_{n,z}$ be the empirical measure of the singular values of $B_{n,z}$. 
Using a standard fact from linear algebra we express
\[  \int x^{2k} d\nu^{(1)}_{n,z} =   n^{-1} \tr\big(  (B_{n,z}B_{n,z}^{\dagger})^k \big)\]
and, since all moments of $B_{n,z}$ are bounded, one can explicitly compute that, for all $k\geq 1$,
\[\lim_{n \rightarrow \infty}  n^{-1} \EE\, \tr\big(  (B_{n,z}B_{n,z}^{\dagger})^k\big) =   \int x^{2k} ~d\nu^{G}_{z}.\]
This implies, by a standard fact, that $\nu_{n,z}^{(1)}$ converges to $\nu_z^{G}$ in probability. 

To prove that $\nu_{n,z}$ converges to $\nu_z^G$ in probability, we now need to show that adding $A^{(2)}_n$ back does not disrupt the singular values too much. By the Chernoff inequality and the strong law of large numbers, we have 
\[ \PP\big( \snorm{A_n^{(2)}}_{\mr{HS}}^2\le 2pn^2\cdot \EE|\xi_{i,j}^{(2)}|^2 \big) = 1-o(1),\]
where $\EE |\xi_{i,j}|^2 \rightarrow 0 $ as we send $M\rightarrow \infty$. Thus  $\snorm{A_n^{(2)}}_{\mr{HS}} = o_{M\rightarrow \infty}\big( \snorm{A_n^{(1)}}_{\mr{HS}}\big)$ which means that the singular values of $A_n^{(1)}$ are not affected much by adding $A_n^{(2)}$. This can be made rigorous by using the Hoffman--Wielandt inequality \cite[Theorem~1]{HW53}. Finally, taking $M\to\infty$, the desired result follows. 
\end{proof}

\vspace{2mm}

We are now ready to prove Lemma~\ref{lem:conv-Of-Tn}, which we do in the course of three lemmas. The proof that $T_n^{(1)}(z)$ and $T_n^{(2)}(z)$  converge in probability to $U^{\circ}(z)$ are very similar and so we limit ourselves here to proving the result for $T_n^{(1)}(z)$. For this, we write $\eta = 5\eps/4 - \eps^2/4$ and note 
\[T_n^{(1)}(z) = -\frac{1}{n}\sum_{i=1}^{(1-\eta)n}\log(\s_i(A_{n,z}^{\ast})) + o(1),\]
with high probability, where the error term arises due to rounding $(1-\eta)n$. Indeed, the top index can be absorbed using Fact~\ref{fact:Gaussian-fact} and the convergence in distribution of $\s_i(A_{n,z}^{\ast})$.

We prove Lemma~\ref{lem:conv-Of-Tn} in three fairly straightforward steps.
For this, we define the empirical measure of the singular values appearing in the sum $T_n^{(1)}$. That is,  
\[ \nu^{(1)}_{n,z} = \frac{1}{(1-\eta)n}\sum_{i=1}^{(1-\eta)n} \delta_{\sigma_i(A^{\ast}_{n,z})} .\] In the first step we identify a
measure $\nu_z^{G,\eta}$ with the property that $\nu^{(1)}_{n,z}$ tends weakly to $\nu_z^{G,\eta}$. We then observe, in a second step, that we can upgrade this to the convergence
\begin{equation}\label{eq:conv-Tn-1} \lim_{n\rightarrow \infty} T^{(1)}_n(z) = -(1-\eta)\int \log t~d\nu_{z}^{G,\eta} + o_{\eta \rightarrow 0}(1),\end{equation} with high probability. In the third step we show the (deterministic) convergence 
\begin{equation}\label{eq:conv-Tn-2} \lim_{\eta \rightarrow 0} -\int \log t~d\nu_{z}^{G,\eta} =  -\int \log t~ d\nu_z^{G} .\end{equation}
Putting these steps together completes the proof. The following lemma is the first step.
\begin{lemma}\label{lem:trunc-conv-in-prob} There exists a measure $\nu_z^{G,\eta}$
with the property that $\nu_{n,z}^{(1)}$ converges to  $\nu_z^{G,\eta}$, in probability. 
\end{lemma}
\begin{proof}
To define the measure $\nu_z^{G,\eta}$, let $X$ be sampled according to $\nu_z^G$, and let $\tau = \tau_{z}(\eta)$ denote the unique real number for which  
\[  \mb{P}_{X}\big( X\le \tau \big)\ge \eta \hspace{4em} \text{ and } \hspace{4em} \mb{P}_{X}\big( X<\tau\big)\le \eta.\] We then set $p = (1-\eta)^{-1}\mb{P}_{X}\big( X<\tau\big)$.
We define our probability measure by defining a random variable $Y \sim \nu_z^{G,z}$. Let $Z \sim \Ber(p)$ be a random variable independent of $X$ and define
\[ Y =  Z \big( X\vert \{ X > \tau \} \big) + \tau(1-Z), \] where $\big( X\vert \{ X > \tau \} \big)$ denotes $X$ conditioned on being $>\tau$. One can now see, by Fact~\ref{fact:trace-moments} and some manipulation, that $\nu^{(1)}_{n,z}$ converges to $\nu_{z}^{G,\nu}$, in distribution. \end{proof} 

\vspace{2mm}

We now upgrade this convergence in probability to \eqref{eq:conv-Tn-1}.

\begin{lemma}\label{lem:trunc-conv-log} We have 
\[ \lim_{n\rightarrow \infty} T^{(1)}_n(z) = -(1-\eta)\int \log t~d\nu_{z}^{G,\eta} + o_{\eta \rightarrow 0}(1) .\]
\end{lemma}
\begin{proof}
We define a smooth bump function $\psi$ on the support of 
$\nu_{n,z}^{(1)}$ for which the function $\psi( \cdot ) \log( \cdot )$ is bounded. Let 
$\psi\colon\mb{R}\to\mb{R}$ be a smooth function with $\|\psi\|_{\infty} \leq 1$, $\psi \geq 0$, and with 
\[ \psi(x) = \begin{cases} \, 1 \qquad \text{ for }  \qquad   \eta/(2C) \leq x \leq|z| + \eta^{-1} ; \\  
\, 0 \qquad \text{ for } \qquad  x < \eta/(4C); \\
\, 0  \qquad \text{ for } \qquad x > |z| + 2\eta^{-1}. \end{cases}, \] where $C$ is as in Fact~\ref{fact:Gaussian-fact}. We now use $\psi$ to approximate $T_n^{(1)}$ by writing
\begin{equation} \label{eq:approx-T} \bigg|-\frac{1}{n}\sum_{i = 1}^{(1-\eta)n}\psi(\sigma_i(A_{n,z}^{\ast}))\log(\sigma_i(A_{n,z}^{\ast})) -  T_n^{(1)}(z) \bigg|\leq \frac{1}{n}\sum_i \1\big( \psi(\sigma_i) <1  \big) \big|\log \sigma_i\big|.  \end{equation}
By Fact~\ref{fact:Gaussian-fact} and Fact~\ref{fact:trace-moments} we can deal with the small $\sigma_i$ on the right of \eqref{eq:approx-T} using that 
\[ \PP\big( \sigma_{(1-\eta)n}(A^{\ast}_{n,z}) \geq \eta/(2C) \big) = 1-o(1). \]
For the large $\sigma_i$ note $\log( |x| +1)\1( |x| > M) \leq x^2/M$, so we have that \eqref{eq:approx-T}
\[\frac{1}{n}\sum_i \1\big( \psi(\sigma_i) <1  \big) \big|\log \sigma_i \big|\leq \frac{\eta}{n} \sum_i \sigma_i^2 \leq \frac{\eta}{n}\snorm{A_{n,z}^{\ast}}_{\mr{HS}}^2\le \eta^{1/2},  \]
with high probability. Here, the last inequality holds by Fact~\ref{fact:HS-norm}. 

Now, since  $\psi( \cdot ) \log( \cdot )$ is bounded, convergence in probability (Lemma~\ref{lem:trunc-conv-in-prob}) yields
\[
-\frac{1}{(1-\eta)n}\sum_{i = 1}^{(1-\eta)n}\psi(\sigma_i(A_{n,z}^{\ast}))\log(\sigma_i(A_{n,z}^{\ast}))= -\int \psi(t)\log t~ d\nu_{n,z}^{(1)} = -\int \psi(t)\log t~d\nu_{z}^{G,\eta}+o_{n\rightarrow \infty}(1),\]
with high probability. Finally note that 
\[ -\int \psi(t)\log t~d\nu_{z}^{G,\eta}= -\int \log t~d\nu_{z}^{G,\eta}, \]
since $\nu_{z}^{G,\eta}([0,\eta/(2C)]) \leq \nu_{z}^{G,\eta}([0,\tau/2]) = 0 $, by Fact~\ref{fact:Gaussian-fact} and the definition of $\nu_{z}^{G,\eta}$.
\end{proof}

\vspace{2mm}

We now prove the following which is the third and final step in our proof of Lemma~\ref{lem:conv-Of-Tn}. 

\begin{lemma}\label{lem:limits-converge}
\[ \lim_{\eta \rightarrow 0} -\int \log t~d\nu_{z}^{G,\eta} =  -\int \log t~ d\nu_z^{G} \]
\end{lemma}
\begin{proof}
Let $X \sim \nu_z^{G,\eta}$. Note that by the definition of $\nu_z^{G,\eta}$, we have 
\[ -\int \log t~d\nu_{z}^{G,\eta} =   -\frac{1-\eta}{\PP( X > \tau)} \int_{> \tau } \log t~ d\nu_z^G  + \eta \log \tau^{-1} =  -\int \log t~ d\nu_z^{G}  + o_{\eta \rightarrow 0}(1),\]
where the last inequality holds by Fact~\ref{fact:Gaussian-fact} put together with the definition of $\tau$, which gives 
$C\tau \geq \PP( X \leq \tau ) \geq \eta $.\end{proof}

\vspace{2mm}

The proof of Lemma~\ref{lem:conv-Of-Tn} is now easy.
\begin{proof}[Proof of Lemma~\ref{lem:conv-Of-Tn}]
We simply string together Lemma~\ref{lem:trunc-conv-in-prob}, Lemma~\ref{lem:trunc-conv-log} and Lemma~\ref{lem:limits-converge} to finish.\end{proof}

\appendix

\section{Omitted proofs}\label{sec:facts}

\begin{proof}[Proof of Lemma~\ref{lem:unstructured-graph}]
Assume $t-1/2$ is integral so that $A_t$ is a $t'\times(t'+1)$ matrix where $t'=t-1/2$. The other case is strictly simpler so we omit it. We fix a set $S \subseteq [t'+1]$ of columns of size $k$. We note that 
\[ |U(S)| = \sum_{i \in [t']} \1(i \in U(S) ), \]
where the sum is over the rows and thus is a sum of independent random variables. Thus we compute the expectation and then control the deviations. For $i\in[t']$ we have
\begin{equation} \label{eq:NU-notS}
\PP( i \in U(S)  ) = (1-p)^k \text{ if }i \in S\text{ and } \PP( i \in U(S) )=\PP( |\xi| \geq \beta ) (1-p)^{k-1}pk \text{ if }i \not\in S.
\end{equation}

If $k$ is such that $ n/(2d) \leq k \leq (n/(2d))\log_{(2)} d$ we can just consider the contribution of $i \not\in S$ and then use Chernoff. Indeed, picking up from \eqref{eq:NU-notS} we have 
\[ pk(1-p)^{k-1}\ge (1/3)(1-p)^{(1/(2p)) \log_{(2)} d}\ge (1/3)\exp(-\log_{(2)} d)\ge (\log d)^{-3/2},\]
where we have used that $d$ is large and that $1-x\ge e^{-2x}$ for $x\le 1/2$. Thus 
\[ \EE |U(S)| \geq (t'-k) (\log d)^{-3/2} \geq 2(\log (n/k))^{-2} dk=2\alpha(k)dk.\]
This follows via noting that $(t'-k)\ge m-k\ge n/2$ and that $1/2\le dk/n\le \log_{(2)}d/2$. Thus by Chernoff, we have 
\begin{equation}\label{eq:UN-chernoff} 
\PP\big( |U(S)| \leq \alpha(k)dk \big) \leq \PP\big( |U(S)| \leq (1/2)\EE |U(S)| \big)  \leq \exp(-cn/(\log d)^{3/2}).
\end{equation} 
We finish this range of $k$ by simply union bounding over all subsets of size $k$: the probability there is a set $S$ of size $k$ with $|U(S)|< \alpha(k)dk$ in this range of $k$ is at most
\[ e^{-cn(\log d)^{-3/2}}\binom{n}{k} 
\leq \exp\big( - cn/(\log d)^{3/2} + (n/(2d))(\log d)\log_{(2)} d \big) \leq e^{-d^{2/3}k},   \]
where the last inequality follows since $k\le (n/(2d)) \log_{(2)}d$ and $d$ is sufficiently large.

We now turn to the trickier range $c^\ast(t-r+1)\le k\le n/(2d)$. For this, define $T = \alpha(k)dk$
and write 
\begin{equation}\label{eq:k-small-set-up}
\PP( \exists S, |S|=k, |U(S)| < T ) \leq  \binom{n}{k}\PP( |U(S)\cap S|< T )\cdot \PP( |U(S) \setminus S|< T).
\end{equation}
For the sake of notation we also set $\gamma = \PP( |\xi| \geq \beta )$ and then define 
\[ q_1 = (1-p)^k\text{ and } q_2 = \gamma(1-p)^{k-1}pk.\]
Note that $1-q_1 \leq pk \text{ and } 1-q_2 \leq 1-\gamma kp/4$. Let $k'=|S\cap[t']|$; note that $k'\in\{k-1,k\}$. Now $|U(S)\cap S|$ is distributed as binomial random variable $B(k',q_1)$ and $|U(S)\setminus S|$ is distributed as $B(t'-k',q_2)$. Therefore \eqref{eq:k-small-set-up} is at most
\[\sum_{i,j<T} \binom{n}{k'}\binom{k'}{i}(1-q_1)^{k'-i} \cdot
\binom{t'-k'}{j}(1-q_2)^{t'-k'-j} \] 
\[ \leq (T+1)^2\bigg(\frac{en}{k'}\bigg)^{k'}2^{k'}(k'p)^{k'-T}\bigg(\frac{en}{T}\bigg)^{T}(1-\g kp/4)^{n/2},\]
where we used that $k'+T < n/3$ and bounds on $q_1,q_2$. Recall that $\alpha(x) = (\log(n/x))^{-2}$ so $T = dk(\log(n/k))^{-2}$. Thus we bound the above by 
\begin{align*}
 (4epnd)^{k'}\bigg(\frac{2en^2}{kdT}\bigg)^{T}e^{-\gamma kpn/8}&\le(4ed^2)^{k}(n/k)^{5T}\exp(-\gamma kd/8)\\  
 &\le(2epn)^ke^{5dk/\log(n/k)}\exp(-\gamma kd/8) \\
 = e^{k(\log(2ed) + 5d/\log(n/k) - \gamma d/8)}\le e^{-d^{2/3}k};
\end{align*}
in the final line we use that $\log(n/k)\ge \log(2d)$ and that $d$ is larger than and absolute constant. The desired result follows after summing over all $k\ge c^\ast(t-r+1)$.
\end{proof}

\begin{proof}[Proof of Lemma~\ref{lem:neighbor-size}]
We first address $\cB$. Note that for a given $i\in[\lfloor t\rfloor]$, we have
\[\mb{P}\bigg(\sum_{j=1}^{\lfloor t\rfloor}\delta_{i,j}>2d+x\bigg)\le\exp(-c(d+x))\]
by Chernoff. By Chernoff again, for every $k$ we see
\[\#\bigg\{i\in[\lfloor t\rfloor]\colon\sum_{j=1}^{\lceil t\rceil}\delta_{i,j}\ge C\log(n/k)\bigg\}\le n(k/n)^2+(\log n)^2\]
with probability at least $1-n^{-9}$. Additionally, every such sum is bounded by $d + O(\log n)$ with probability at least $1-n^{-9}$. We can now see $\cB$ holds with sufficiently good probability by considering for each possible size of $S$ the sum of the largest rows (and similarly for the columns).

For $\cQ$, note that $\mb{E}|\xi_{i,j}|^2\le1+\beta^{-2}\le 2\beta^{-2}$, so we have $\mb{P}(|\xi_{i,j}|>8H/\beta)\le 1/(32H^2)$, and hence $\mb{P}(\delta_{i,j}|\xi_{i,j}|>8H/\beta)\le p/(32H^2)$. By the Chernoff bound, there are at most $2pn^2/(32H^2)+(\log n)^2$ entries of $A_t$ which are greater than $8H/\beta$ in magnitude with probability at least $1-\exp(-(\log n)^2)$. Now we take a union bound over a dyadically separated set of $H\in[1,n^4]$ and appropriately adjust constants to obtain $\mb{P}(\cQ)\le 1-1/(3n^3)$.

For $\mc{R}$, note that we may assume $\ell\le n/2$ (else the bound is vacuous) and thus $L/\beta\ge d^5\beta^{-5}\ge d^2\mb{E}|\xi|$. Furthermore note that 
\[\mb{E}[\delta_{i,j}|\xi_{i,j}|] = p\mb{E}|\xi_{i,j}|\le p\beta^{-1}\text{ and }\on{Var}[\delta_{i,j}|\xi_{i,j}|]\le \mb{E}[\delta_{i,j}^2|\xi_{i,j}|^2]\le p(1+\beta^{-2})\le 2p\beta^{-2}.\]
Thus we have by Chebyshev's inequality
\[\mb{P}\bigg(\sum_{j=1}^{\lceil t\rceil}|(A_t)_{i,j}|\ge L/(2\beta)\bigg)\le \frac{2pn\beta^{-2}}{ (L/(2\beta) - pn\mb{E}[|\xi_{i,j}|])^2}\le 2pn\beta^{-2}\cdot (L/(4\beta))^{-2} = 32dL^{-2}.\]
The desired result then follows by Chernoff applied to each row and considering a union bound over values of $\ell$ along an exponential sequence, for $\ell>n^{1/9}$. (When $\ell$ is sufficiently small we can use $\cQ$ instead.)
\end{proof}

\begin{proof}[Proof of Fact~\ref{fact:number-of-iterations}]
If $k\ge (n/(2d))\log_{(2)} d $ then the result is trivial, so we assume the opposite.

Now we consider how many steps it takes us to double the value of $k_t$ in two different ranges.  When $\ell\le n\exp(-d)$ we see $d\le\log(n/\ell)$ hence
\[g(\ell)\ge (cd\ell/2)(\log(n/\ell))^{-3}.\]
The right side is increasing in $\ell$, thus it takes at most $O((\log(n/\ell))^3)$ steps for this recurrence to go from $\ell$ to a value which is at least size $2\ell$.  

When $n\exp(-d)\le\ell\le (n/(2d)) \log_{(2)} d$ we have $d\ge\log(n/\ell)$ so
\[g(\ell)\ge  (c\ell/2) (\log(n/\ell))^{-2}.\]
Thus it takes at most $O((\log(n/\ell))^2)$ steps to go from $\ell$ to at least $2\ell$. Putting these two observations together we have
\[\tau\le O\bigg(\sum_{a \geq 0} (\log  (n/(2^ak)))^3\bigg) = O((\log(n/k))^4),\]
as desired.
\end{proof}

We now prove the required anti-concentration inequality Lemma~\ref{lem:anti-concentration-bernoulli}; for the sake of simplicity we define for a (real or complex) random variable $\Gamma$,
\[\mc{L}(\Gamma,t) = \sup_{z\in \mb{C}}\mb{P}(|\Gamma-z|\le t).\]
We require the following anti-concentration inequality due to L\'evy--Kolmogorov--Rogozin \cite{Kol58,Rog61}.
\begin{lemma}\label{lem:LKR}
Let $\xi_1,\ldots,\xi_n$ be independent real-valued random variables. For any real numbers $r_1,\ldots,r_n > 0$ and any real $r \ge \max_{1\le i\le n}r_i$, we have
\begin{align*}
\mc{L}\bigg(\sum_{i=1}^{n}\xi_i, r\bigg) \le \frac{Cr}{\sqrt{\sum_{i=1}^{n}(1-\mc{L}(\xi_i, r_i))r_i^2}}
\end{align*}
for an absolute constant $C>0$.
\end{lemma}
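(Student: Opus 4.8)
The plan is to deduce this from the classical link between the concentration function and the characteristic function. Write $\phi_X(t)=\mb{E}\,e^{itX}$. The starting point is \emph{Esseen's concentration inequality}: for every random variable $\Gamma$ and every $\lambda>0$,
\[ \mc{L}(\Gamma,\lambda)\le C\lambda\int_{-1/\lambda}^{1/\lambda}|\phi_\Gamma(t)|\,dt, \]
which follows by convolving the law of $\Gamma$ with a rescaled Fej\'er kernel (whose Fourier transform is a triangle supported on $[-1/\lambda,1/\lambda]$) and comparing densities, or may simply be cited. Applying this with $\Gamma=\sum_i\xi_i$ and $\lambda=r$, and factoring $|\phi_{\sum_i\xi_i}(t)|=\prod_i|\phi_{\xi_i}(t)|$ by independence, the lemma reduces to the estimate
\[ \mc{L}\Big(\sum_{i=1}^n\xi_i,\,r\Big)\le Cr\int_{-1/r}^{1/r}\prod_{i=1}^n|\phi_{\xi_i}(t)|\,dt = O\!\left(\frac{r}{\sqrt{S}}\right),\qquad S:=\sum_{i=1}^n \big(1-\mc{L}(\xi_i,r_i)\big)\,r_i^2 . \]

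For the main step I would symmetrize. Let $\tilde\xi_i=\xi_i-\xi_i'$ with $\xi_i'$ an independent copy of $\xi_i$; then $|\phi_{\xi_i}(t)|^2=\mb{E}\cos(t\tilde\xi_i)\in[0,1]$, so $1-|\phi_{\xi_i}(t)|^2=\mb{E}(1-\cos(t\tilde\xi_i))\le 1$, and from $\sqrt{1-x}\le e^{-x/2}$,
\[ \prod_{i=1}^n|\phi_{\xi_i}(t)|\le\exp\!\big(-\tfrac12 G(t)\big),\qquad G(t):=\sum_{i=1}^n \mb{E}\,\big(1-\cos(t\tilde\xi_i)\big). \]
Two elementary facts control $G$. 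First, $\mb{P}(|\tilde\xi_i|\ge r_i)\ge 1-\mc{L}(\xi_i,r_i)$, since $\mb{P}(|\tilde\xi_i|\le r_i)=\mb{E}_{\xi_i'}\,\mb{P}(|\xi_i-\xi_i'|\le r_i)\le\mc{L}(\xi_i,r_i)$. Second, the inequality $1-\mr{sinc}(x)\ge c_1\min(x^2,1)$, where $\mr{sinc}(x)=\sin(x)/x$, holds for \emph{all} real $x$ with an absolute constant $c_1>0$; its global validity (in contrast to the corresponding bound for $1-\cos$, which fails for large $x$) is exactly what makes this work. Since $\int_{-1/r}^{1/r}(1-\cos(tx))\,dt=\frac2r\big(1-\mr{sinc}(x/r)\big)$, integrating $\mb{E}(1-\cos(t\tilde\xi_i))$ over $|t|\le 1/r$ and using $r_i\le r$ gives
\[ \int_{-1/r}^{1/r}\big(1-|\phi_{\xi_i}(t)|^2\big)\,dt=\frac2r\,\mb{E}\big[1-\mr{sinc}(\tilde\xi_i/r)\big]\ge\frac{2c_1}{r}\,\mb{E}\big[\min(\tilde\xi_i^2/r^2,1)\big]\ge\frac{2c_1\,r_i^2}{r^3}\,\big(1-\mc{L}(\xi_i,r_i)\big), \]
so summing over $i$ yields $\int_{-1/r}^{1/r}G(t)\,dt\gtrsim S/r^3$. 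Moreover, in the regime where each $\tilde\xi_i$ is, up to negligible tail mass, supported in $[-\pi r,\pi r]$, one gets the \emph{pointwise} bound $G(t)\gtrsim t^2 S$ on $|t|\le 1/r$ (truncate $\tilde\xi_i$ at scale $\pi/|t|$ and use $1-\cos u\ge\tfrac{2}{\pi^2}u^2$ for $|u|\le\pi$).

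It remains to convert control of $G$ into a bound on $\int_{-1/r}^{1/r}e^{-G(t)/2}\,dt$. When the pointwise bound $G(t)\gtrsim t^2 S$ is available this is immediate: $\int_{-1/r}^{1/r}e^{-G(t)/2}\,dt\le\int_{\mb{R}}e^{-ct^2 S}\,dt=O(S^{-1/2})$ is a Gaussian integral, and feeding this back through Esseen's inequality gives $\mc{L}(\sum_i\xi_i,r)=O(r/\sqrt S)$. I expect the one genuine obstacle to be the general case, where $G$ can be highly oscillatory: a single heavy-tailed $\xi_i$ forces $|\phi_{\xi_i}|$ to return close to $1$ along a sequence of frequencies, so no pointwise lower bound on $G$ can hold. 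To handle this I would use Esseen's squaring trick --- pass to $|\phi_{\sum_i\xi_i}|^{2^k}$, the characteristic function of a $2^k$-fold symmetrized sum (whose per-summand anti-concentration is inherited from that of the $\xi_i$), and run a Cauchy--Schwarz chain $\int|\phi|\le(2/r)^{1-2^{-k}}\big(\int|\phi|^{2^k}\big)^{2^{-k}}$ to reduce to a regime where the Gaussian estimate applies --- or, since the inequality is entirely classical, simply cite it from Petrov's monograph or the original papers of Kolmogorov and Rogozin. Everything outside this last reduction is routine.
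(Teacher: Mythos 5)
The paper does not actually prove this lemma: it is quoted verbatim as the classical L\'evy--Kolmogorov--Rogozin inequality with citations to Kolmogorov (1958) and Rogozin (1961), so your final fallback (``simply cite it'') coincides with what the paper does. Judged as a proof attempt, however, your sketch has a genuine gap exactly at the point you flag. The ingredients you assemble are correct: Esseen's inequality, the factorization $|\phi_{\sum_i\xi_i}|=\prod_i|\phi_{\xi_i}|$, the bound $\prod_i|\phi_{\xi_i}(t)|\le e^{-G(t)/2}$, the symmetrization estimate $\mb{P}(|\tilde\xi_i|\ge r_i)\ge 1-\mc{L}(\xi_i,r_i)$, and the averaged sinc bound giving $\int_{-1/r}^{1/r}G(t)\,dt\gtrsim S/r^3$. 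But an averaged lower bound on $G$ cannot control $\int_{-1/r}^{1/r}e^{-G(t)/2}\,dt$ (Jensen goes the wrong way), and the pointwise bound $G(t)\gtrsim t^2S$ is genuinely false outside your ``nice'' regime: take each $\xi_i$ uniform on $\{0,N\}$ with $N\gg r$, so that $G(t)=\tfrac{n}{2}(1-\cos(tN))$ vanishes at every $t\in(2\pi/N)\Z$ inside $[-1/r,1/r]$; the conclusion survives there only because the dips are narrow and numerous, not because of Gaussian domination. Note also that in the paper's application (Lemma~\ref{lem:anti-concentration-bernoulli}) the summands $\Re(v_j\delta_j\xi_j)$ are unbounded ($\xi$ has only a second moment), so the hard case is the one actually needed.

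The proposed repair does not close this gap. The H\"older chain $\int_{-1/r}^{1/r}|\phi|\le(2/r)^{1-2^{-k}}\big(\int_{-1/r}^{1/r}|\phi|^{2^k}\big)^{2^{-k}}$ moves in the wrong direction quantitatively: even granting the optimal estimate $\int_{-1/r}^{1/r}|\phi|^{2^k}\,dt=O\big((2^kS)^{-1/2}\big)$ for the iterated symmetrized sum, substituting back gives $\mc{L}\big(\sum_i\xi_i,r\big)=O\big((r/\sqrt{2^kS})^{2^{-k}}\big)$, which is weaker than $r/\sqrt{S}$ for every $k\ge1$ and tends to a constant as $k\to\infty$; the squaring device in Hal\'asz-type arguments works through the sumset structure of the level set $\{t:|\phi(t)|\ge1/2\}$, not through a reduction to the Gaussian regime. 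Handling the lattice/heavy-tailed dips requires either Rogozin's combinatorial (Sperner-type) argument or Esseen's more careful characteristic-function proof as presented, e.g., in Petrov's monograph. Since the paper treats the lemma as a black box, citing it is entirely appropriate --- but the analytic sketch as written should not be presented as a proof of the general statement.
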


We will also require the following basic observation regarding the distribution of certain complex random variables.
\begin{lemma}\label{lem:basic-complex}
If $\mc{L}(\xi,\beta)\le 1-\beta$ and $z\in \mb{C}\setminus\{0\}$. We have either 
\[\mc{L}(\Re(z\xi), \beta|z|/\sqrt{2})\le 1-\beta/2\quad\emph{or}\quad\mc{L}(\Im(z\xi), \beta|z|/\sqrt{2})\le 1-\beta/2.\]
\end{lemma}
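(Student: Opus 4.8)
The plan is a short contradiction argument based on combining concentration of the real and imaginary parts via the elementary bound $u^2+v^2\le \rho^2$ when $|u|,|v|\le\rho/\sqrt2$. First I would renormalize: since multiplication by $z$ scales all distances by $|z|$, we have $\mc{L}(z\xi,\beta|z|)=\mc{L}(\xi,\beta)\le 1-\beta$. Write $\zeta=z\xi$ and $\rho=\beta|z|$, so that $\mc{L}(\zeta,\rho)\le 1-\beta$, i.e. $\mb{P}(|\zeta-w|\le\rho)\le 1-\beta$ for every $w\in\mb{C}$.

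Now suppose, for contradiction, that \emph{both} conclusions fail. Then there are reals $a,b$ with
\[
\mb{P}\big(|\Re\zeta-a|\le \rho/\sqrt2\big)>1-\beta/2\quad\text{and}\quad \mb{P}\big(|\Im\zeta-b|\le \rho/\sqrt2\big)>1-\beta/2.
\]
By the union bound the intersection of these two events has probability exceeding $1-\beta$, and on that intersection
\[
|\zeta-(a+bi)|^2=(\Re\zeta-a)^2+(\Im\zeta-b)^2\le \rho^2/2+\rho^2/2=\rho^2.
\]
Hence $\mb{P}(|\zeta-(a+bi)|\le\rho)>1-\beta$, contradicting $\mc{L}(\zeta,\rho)\le 1-\beta$. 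Therefore at least one of the two stated inequalities must hold. I do not anticipate any real obstacle here: the only ``content'' is the observation that concentration of both coordinates in intervals of half-width $\rho/\sqrt2$ forces concentration of $\zeta$ in a disk of radius $\rho$, which is exactly calibrated so that the $\sqrt2$ factors match.
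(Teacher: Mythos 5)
Your proof is correct and takes essentially the same route as the paper: assume both conclusions fail, apply the union bound, use the Pythagorean identity to show $z\xi$ concentrates in a disk of radius $\beta|z|$, and rescale by $|z|$ to contradict $\mc{L}(\xi,\beta)\le 1-\beta$. The only cosmetic difference is that you renormalize at the start while the paper divides by $z$ at the end; the content is identical.
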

\begin{proof}
Suppose not. Then there exist $x,y\in\mb{R}$ such that
\[|\Re(z\xi)-x|\le\beta|z|/\sqrt{2},\quad|\Im(z\xi)-y|\le\beta|z|/\sqrt{2}\]
with probability greater than $1-\beta$. This implies that 
\[|z\xi-x-yi|\le \beta|z|\]
with probability greater than $1-\beta$; dividing by $z$ we obtain a contradiction.
\end{proof}

We now prove Lemma~\ref{lem:anti-concentration-bernoulli}.
\begin{proof}[{Proof of Lemma~\ref{lem:anti-concentration-bernoulli}}]
Without loss of generality, let $|v_1|\ge|v_2|\ge\cdots\ge|v_n|$ so that $|v_k|\ge\rho$ and let $\rho'= \beta\rho/\sqrt{2}$. By Lemma~\ref{lem:basic-complex}, at least $k/2$ coordinates $j\in[k]$ one of 
\[\mc{L}(\Re(v_j\xi),\rho')\le 1-\beta/2\quad\text{or}\quad\mc{L}(\Im(v_j\xi),\rho')\le 1-\beta/2.\]
By multiplying $\xi$ by $\sqrt{-1}$ if necessary, we may assume the first case holds, and let the set of such coordinates be $S\subseteq[k]$. This immediately implies that 
\[\mc{L}(\Re(v_j\delta_j\xi_j),\rho')\le 1-\beta p/2\]
for all $j\in S$. By Lemma~\ref{lem:LKR}, for appropriately chosen $C$ we have  
\begin{align*}
\mc{L}\bigg(\sum_{j=1}^nv_j\delta_j\xi_j,r\bigg)&\le\mc{L}\bigg(\sum_{j\in S}v_j\delta_j\xi_j,r\bigg)\le\mc{L}\bigg(\sum_{j\in S}\Re(v_j\delta_j\xi_j),r\bigg)\\
&\le\frac{Cr}{\big(\rho^{\prime2}\sum_{i\in S}(1-\mc{L}(\Re(v_i\delta_i\xi_i),\rho')\big)^{1/2}}\le\frac{Cr}{\rho'((kp\beta)/2)^{1/2}}=\frac{2C\beta^{-3/2}r}{(kp)^{1/2}\rho}
\end{align*}
where we have used $r\ge\beta\rho/\sqrt{2}=\rho'$.
\end{proof}
\vspace{-5mm}
\bibliographystyle{amsplain0}
\bibliography{main}

\providecommand{\bysame}{\leavevmode\hbox to3em{\hrulefill}\thinspace}
\providecommand{\MR}{\relax\ifhmode\unskip\space\fi MR }
% \MRhref is called by the amsart/book/proc definition of \MR.
\providecommand{\MRhref}[2]{%
  \href{http://www.ams.org/mathscinet-getitem?mr=#1}{#2}
}
\providecommand{\href}[2]{#2}
\begin{thebibliography}{10}

\bibitem{Bai97}
Z.~D. Bai, \emph{Circular law}, Ann. Probab. \textbf{25} (1997), 494--529.

\bibitem{BR19}
Anirban Basak and Mark Rudelson, \emph{The circular law for sparse
  non-{H}ermitian matrices}, Ann. Probab. \textbf{47} (2019), 2359--2416.

\bibitem{BC12}
Charles Bordenave and Djalil Chafa\"{\i}, \emph{Around the circular law},
  Probab. Surv. \textbf{9} (2012), 1--89.

\bibitem{coo19}
Nicholas Cook, \emph{The circular law for random regular digraphs}, Ann. Inst.
  Henri Poincar\'{e} Probab. Stat. \textbf{55} (2019), 2111--2167.

\bibitem{CTV06}
Kevin~P. Costello, Terence Tao, and Van Vu, \emph{Random symmetric matrices are
  almost surely nonsingular}, Duke Math. J. \textbf{135} (2006), 395--413.

\bibitem{Ede88}
Alan Edelman, \emph{Eigenvalues and condition numbers of random matrices}, SIAM
  J. Matrix Anal. Appl. \textbf{9} (1988), 543--560.

\bibitem{FKSS23}
Asaf Ferber, Matthew Kwan, Ashwin Sah, and Mehtaab Sawhney, \emph{Singularity
  of the k-core of a random graph}, Duke Mathematical Journal \textbf{1}
  (2023), 1--40.

\bibitem{Gir84}
V.~L. Girko, \emph{The circular law}, Teor. Veroyatnost. i Primenen.
  \textbf{29} (1984), 669--679.

\bibitem{GKSS23}
Margalit Glasgow, Matthew Kwan, Ashwin Sah, and Mehtaab Sawhney, \emph{The
  exact rank of sparse random graphs}, arXiv:2303.05435.

\bibitem{GT10}
Friedrich G\"{o}tze and Alexander Tikhomirov, \emph{The circular law for random
  matrices}, Ann. Probab. \textbf{38} (2010), 1444--1491.

\bibitem{HW53}
A.~J. Hoffman and H.~W. Wielandt, \emph{The variation of the spectrum of a
  normal matrix}, Duke Math. J. \textbf{20} (1953), 37--39.

\bibitem{Kol58}
A.~Kolmogorov, \emph{Sur les propri\'{e}t\'{e}s des fonctions de concentrations
  de {M}. {P}. {L}\'{e}vy}, Ann. Inst. H. Poincar\'{e} \textbf{16} (1958),
  27--34.

\bibitem{LLTTY21}
Alexander~E. Litvak, Anna Lytova, Konstantin Tikhomirov, Nicole
  Tomczak-Jaegermann, and Pierre Youssef, \emph{Circular law for sparse random
  regular digraphs}, J. Eur. Math. Soc. (JEMS) \textbf{23} (2021), 467--501.

\bibitem{Meh67}
M.~L. Mehta, \emph{Random matrices and the statistical theory of energy
  levels}, Academic Press, New York-London, 1967.

\bibitem{Rog61}
Boris~A. Rogozin, \emph{On the increase of dispersion of sums of independent
  random variables}, Teor. Verojatnost. i Primenen \textbf{6} (1961), 106--108.

\bibitem{RT19}
Mark Rudelson and Konstantin Tikhomirov, \emph{The sparse circular law under
  minimal assumptions}, Geom. Funct. Anal. \textbf{29} (2019), 561--637.

\bibitem{SSS23b}
Ashwin Sah, Julian Sahasrabudhe, and Mehtaab Sawhney, \emph{The limiting
  spectral law for sparse iid matrices}, arXiv:2310.17635.

\bibitem{TV08}
Terence Tao and Van Vu, \emph{Random matrices: the circular law}, Commun.
  Contemp. Math. \textbf{10} (2008), 261--307.

\bibitem{TV10}
Terence Tao and Van Vu, \emph{Random matrices: universality of {ESD}s and the
  circular law}, Ann. Probab. \textbf{38} (2010), 2023--2065, With an appendix
  by Manjunath Krishnapur.

\bibitem{Wig58}
Eugene~P. Wigner, \emph{On the distribution of the roots of certain symmetric
  matrices}, Ann. of Math. (2) \textbf{67} (1958), 325--327.

\end{thebibliography}

\end{document}